\documentclass[12pt]{article}


\usepackage{titlesec}
\usepackage{amsmath,amsthm}
\usepackage{amsfonts,xcolor,graphicx,url}
\usepackage{epstopdf,epsfig}

\usepackage[margin=2.5cm]{geometry}

\titleformat{\section}
{\normalfont\bfseries}{\thesection}{1em}{}
\titleformat{\subsection}
{\normalfont}{\thesubsection}{1em}{}

\numberwithin{equation}{section}


\newcommand{\tens}{\mathrm}
\renewcommand{\vec}{}

\theoremstyle{plain}
\newtheorem{theorem}{Theorem}[section]
\newtheorem{lemma}[theorem]{Lemma}
\newtheorem{proposition}[theorem]{Proposition}
\newtheorem{corollary}[theorem]{Corollary}
\theoremstyle{definition}
\newtheorem{definition}[theorem]{Definition}

\theoremstyle{remark}
\newtheorem{remark}{Remark}
\newtheorem{note}{Note}



\begin{document}

\title{Piecewise Flat Curvature and Ricci Flow in Three Dimensions}

\author{Rory Conboye, Warner A. Miller}
\AtEndDocument{\raggedleft {\sc \ \\ Department of Physics, \\ Florida Atlantic University, \\ Boca Raton, \\ FL 33431-0991} \\ \ \\ rconboye@fau.edu}





\date{}
\maketitle

\begin{abstract}
Discrete forms of the scalar, sectional and Ricci curvatures are constructed on simplicial piecewise flat triangulations of smooth manifolds, depending directly on the simplicial structure and a choice of dual tessellation. This is done by integrating over volumes which include appropriate samplings of hinges for each type of curvature, with the integrals based on the parallel transport of vectors around hinges. Computations for triangulations of a diverse set of manifolds show these piecewise flat curvatures to converge to their smooth values. The Ricci curvature also gives a piecewise flat Ricci flow as a fractional rate of change of edge-lengths, again converging to the smooth Ricci flow for the manifolds tested.

\

\noindent{Keywords:
Piecewise-linear, sectional curvature, Ricci tensor, Ricci flow, Regge calculus}



\end{abstract}


\section{Introduction}


Piecewise flat manifolds have long been seen as a discrete computational tool for smooth manifolds. Simplicial piecewise flat manifolds are formed by joining Euclidean $n$-simplices (line segments, triangles, tetrahedra, etc.) along their boundaries and are entirely determined by their graphs and set of edge-lengths. These are well-defined manifolds themselves and are commonly used as triangulations of smooth manifolds.


The most fundamental of piecewise flat constructions use only the simplicial structure, i.e. the edge-lengths and the deficit angles around co-dimension-2 simplices, or hinges. These have combinatorial type formulations, and provide analogues of smooth manifold constructions, usually by emphasizing certain properties of their smooth counterparts.
 The Regge analogue of the Einstein equations take this form \cite{Regge}, as well as the combinatorial Ricci and scalar curvatures of Forman \cite{Forman}, Cooper $\&$ Rivin \cite{CoopRiv} and Glickenstein \cite{Glick}, and the combinatorial Yamabe and Ricci flows of Luo \cite{LuoYam}, Glickenstein \cite{GlickYam, GlickYamMax} and Chow  $\&$ Luo \cite{CombRF}.
 While the Regge equations, for example, can provide criteria for a piecewise flat manifold to be a triangulation of an Einstein manifold, and combinatorial Ricci flow may uniformize a triangulation of a smooth manifold, these constructions will not in general give values related to their smooth counterparts. Even on different triangulations of the same smooth manifold, these constructions will not necessarily be comparable.


In order to converge to smooth values, local scaling information is also required. This can be seen by comparing piecewise flat and smooth units of measurement, or by noticing that the simplicial structure represents integrals of smooth properties. In its most basic form this scaling information can be given by a vertex-based dual tessellation of the piecewise flat manifold. The most fundamental of constructions which converge to the smooth are essentially combinatorial in nature, but with additional scaling from a dual tessellation.
 This approach has been used in discrete exterior calculus \cite{HiraniPhD,DEC2}, in Regge calculus \cite{HambWill,Mbbp,SRF,TraceK} and in discrete differential geometry in general \cite{BobSpring}.
 A disadvantage of this approach is that there are many different types of dual tessellations, though the Voronoi and barycentric are by far the most common.


While the ambiguity in choosing a dual tessellation has in part led to the popularity of the combinatorial constructions, the inclusion of such scaling information is a requirement for certain applications. Most obviously, such piecewise flat constructions can be used to approximate smooth curvatures and curvature flows. They can also be used to give insight into topological changes in smooth manifold flows, such as singularity formation \cite{MNeckPinch1,MillNand}, something which cannot be achieved with finite element or other interpolating methods. The emergence of smooth behaviour from discrete structures can also be achieved only through scaled combinatorial methods. This is becoming quite topical in physics, with many quantum gravity models suggesting that space and time may be discrete in nature \cite{RovSmo,NonAss,CausSets}, and many of these already formulated in terms of piecewise flat manifolds \cite{RegWill,RovVid}.




In this paper,
 the piecewise flat scalar curvature at each vertex, and the three-dimensional sectional curvature orthogonal to each edge, are constructed by integrating over specific sets of volumes.
 These volumes are based on a choice of dual tessellation, though the specific choice is left open, and are defined to give the most appropriate sampling of hinges for each type of curvature.
 The integrals are built from the local integrated sectional curvatures of certain $2$-surface regions, which are defined by the parallel transport of vectors around the region boundaries.
 The three-dimensional Ricci curvature tangent to an edge is then given by a combination of the scalar and sectional curvatures, with the Ricci flow following directly.

The main results of the paper are highlighted in theorem \ref{thm:1.1} below, where the tessellations are assumed to be either Voronoi or barycentric in order to simplify the equations slightly compared with those appearing later in the paper.

\begin{theorem}
\label{thm:1.1}

\

\begin{enumerate}
\item For a $2$-surface region $D$ enclosing a single hinge $h$, making an angle $\theta$ with the plane orthogonal to $h$, the integrated sectional curvature over $D$ is consistent to leading order in the deficit angle $\epsilon_h$ and is given by
\begin{equation}
\int_D K^\theta \, \mathrm{d} A
 = \cos \theta \, \epsilon_h + O(\epsilon_h^2) .
 \label{1IK}
\end{equation}

\item The average scalar curvature over the Voronoi or barycentric dual volume $V_v$ at a vertex $v$ is
\begin{equation}
R_v
 := \widetilde R_{V_v}
 = \frac{1}{|V_v|}
   \sum_{h \subset \mathrm{star}(v)} |h| \epsilon_h ,
 \label{1R}
\end{equation}
with $|V_v|$ and $|h|$ representing the volume measures of $V_v$ and $h$ respectively.

\item In three-dimensions, over a volume $V_\ell$ formed by the points of the volumes dual to the vertices of an edge $\ell$ that are orthogonally related to $\ell$, the average sectional curvature orthogonal to $\ell$ is
\begin{equation}
K_\ell := \widetilde K_{V_\ell}
 = \frac{1}{|V_\ell|} \left(
     |\ell| \epsilon_\ell
   + \sum_{h} \frac{1}{2} \, |h| \, \cos^2 \theta_h \, \epsilon_h
   \right) ,
 \label{1K}
\end{equation}
with the sum taken over all other edges (hinges) $h$ intersecting $V_\ell$, and $\theta_h$ giving the angle between $h$ and $\ell$.

\item The average Ricci curvature along an edge $\ell$ in a three-dimensional piecewise flat manifold is
\begin{equation}
\tens{Rc}_\ell
 := \widetilde{\tens{Rc}} (\hat \ell, \hat \ell)
  = \frac{1}{4} \left(R_{v_1} +  R_{v_2}\right) - K_\ell ,
 \label{1Rc}
\end{equation}
for the vertices $v_1$ and $v_2$ in the closure of $\ell$.

\item The Ricci flow of a smooth manifold $M^3$ is approximated by the edge-length equation
\begin{equation}
\frac{1}{|\ell|} \frac{d |\ell|}{d t} = - \mathrm{Rc}_\ell ,
 \label{1RF}
\end{equation}
for a piecewise flat triangulation $S^3$ of $M^3$.
\end{enumerate}
\end{theorem}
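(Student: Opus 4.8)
The plan is to obtain the edge-length flow \eqref{1RF} as the discrete shadow of the smooth Ricci flow $\partial_t g_{ij} = -2\,R_{ij}$, using the Ricci approximation $\mathrm{Rc}_\ell$ of part (4) as the bridge. The first step is to regard each edge $\ell$ of the triangulation $S^3$ as modelling the geodesic segment $\gamma_t$ joining its two fixed endpoints $v_1,v_2\in M^3$ with respect to the evolving metric $g_t$, so that the edge-length is represented by the geodesic distance
\begin{equation}
|\ell|(t) = L(t) = \int_0^1 \sqrt{(g_t)_{ij}\,\dot\gamma_t^{\,i}\,\dot\gamma_t^{\,j}}\; d\lambda .
\end{equation}
Throughout the flow the vertices stay fixed as points of $M^3$; only the metric, and hence the geodesic lengths, evolve.

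Next I would differentiate $L(t)$ in time and apply the first variation of arc length. The time derivative splits into a contribution from the explicit $t$-dependence of the metric and one from the $t$-dependence of the geodesic $\gamma_t$ itself; after integration by parts the latter produces only boundary terms, which vanish since $v_1,v_2$ are held fixed, together with a bulk term proportional to the geodesic equation, which vanishes because $\gamma_t$ is a geodesic. Parametrising by arc length and writing $\hat\gamma$ for the unit tangent, this leaves
\begin{equation}
\frac{dL}{dt} = \frac{1}{2}\int_0^L \partial_t (g_t)_{ij}\,\hat\gamma^{\,i}\,\hat\gamma^{\,j}\; ds .
\end{equation}
Substituting the Ricci flow equation then gives $\tfrac{dL}{dt} = -\int_0^L \mathrm{Rc}(\hat\gamma,\hat\gamma)\,ds$.

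To close the argument I would divide by $|\ell|$ and identify the right-hand side with the discrete average. For a short edge in a sufficiently fine triangulation the integral of $\mathrm{Rc}(\hat\gamma,\hat\gamma)$ along $\gamma_t$ is well approximated by its mean value in the edge direction times the length, and by part (4) this mean is precisely the quantity that $\mathrm{Rc}_\ell$ is built to converge to; dividing through by $L=|\ell|$ then yields \eqref{1RF}. The main obstacle is exactly this last identification: it requires that $\mathrm{Rc}_\ell$ reproduce the average of $\mathrm{Rc}(\hat\ell,\hat\ell)$ along the edge to leading order, which rests on the consistency of the scalar and sectional constructions of parts (2) and (3) and on the leading-order sectional estimate \eqref{1IK}. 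One must also control the error made in replacing the pointwise integral by a single averaged value, which is small only in the refinement limit; accordingly the statement is an approximation that is expected to become exact as the triangulation is refined, and the numerical convergence tests referenced in the abstract are the natural evidence for this.
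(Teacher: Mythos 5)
Your proposal addresses only the fifth claim of the theorem: parts (1)--(4) are taken as inputs (you explicitly lean on part (4) to identify the edge average of $\mathrm{Rc}(\hat\gamma,\hat\gamma)$ with $\mathrm{Rc}_\ell$), so the bulk of the statement --- the leading-order estimate \eqref{1IK}, the scalar and sectional curvature formulas \eqref{1R} and \eqref{1K}, and the Ricci identity \eqref{1Rc} --- is left unproved. In the paper these are established separately: theorem \ref{thm:IKtheta} by an explicit parallel-transport computation across the corner that the oblique surface $P_h^\theta$ forms along the line $k$; theorems \ref{thm:R} and \ref{thm:Kl} by decomposing the dual volumes $V_v$ and $V_\ell$ into single-hinge regions and summing the hinge-orthogonal and hinge-oblique integrated sectional curvatures; and theorem \ref{thm:Rcl} from the three-dimensional identity $\mathrm{Rc}(\vec u,\vec u)=\tfrac12 R-K^\perp(\vec u)$ of lemma \ref{lem:RcM3}. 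None of that content is reproduced or replaced by your argument, so as a proof of the full theorem there is a substantial gap.

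For part (5) itself your route coincides with the paper's (lemma \ref{lem:RFlM} together with theorem \ref{prop:RFS}): differentiate the geodesic length of each edge under the evolving metric, substitute $\partial_t \tens{g} = -2\,\tens{Rc}$, and identify the resulting edge-averaged Ricci component with $\mathrm{Rc}_\ell$. On one point you are in fact more careful than the paper: you invoke the first variation of arc length to dispose of the contribution from the time-dependence of the geodesic itself (the boundary terms vanish because the vertices are held fixed, and the bulk term vanishes because the curve is a geodesic), whereas the paper simply differentiates under the integral sign with the path treated as fixed. Your closing caveat --- that the final identification holds only to the accuracy of the discrete curvature constructions and becomes exact only in the refinement limit --- matches the paper's own qualification that \eqref{1RF} is an approximation valid as long as the simplicial complex remains a good triangulation of $M^3$; note also that the paper states the flow in normalized form with an extra $\frac{1}{n}\widetilde R_S$ term and obtains \eqref{1RF} by dropping it, which your unnormalized derivation handles directly.
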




Computations for the above constructions have been carried out for triangulations of the $3$-sphere, $3$-cylinder, Gowdy model and Nil-$3$ geometry. The curvatures are shown to converge to the smooth values for triangulations of increased resolution, with errors on the order of $1 \%$ for triangulations with roughly 150 tetrahedra in the case of the latter two. The Ricci flow is computed analytically for the $3$-sphere and $3$-cylinder, showing clear convergence to the smooth flow. The Gowdy and Nil-3 Ricci flows were computed numerically, with initial results matching the smooth behaviour for short times, and this time increasing for higher resolution triangulations


Recently, one of us suggested a piecewise flat `simplicial Ricci flow' \cite{SRF} that yields convergence to smooth values only for geometries that have a high degree of symmetry. This method also requires specific Delaunay lattices and gives the set of edge-equations as a sparse-array matrix. The piecewise flat Ricci flow developed here can be computed much more efficiently and for a wider class of triangulations, with computations already showing convergence to smooth values for a larger class of manifolds.


The paper begins by giving definitions and notation for piecewise flat manifolds, their use as triangulations of smooth manifolds and the scalar, sectional and Ricci curvatures on smooth manifolds. The integrated sectional curvature around single hinges is then given in section \ref{sec:IK}, both for surfaces orthogonal to the hinge and those at an angle. Sections \ref{sec:Sca} and \ref{sec:Sec} give motivations for the choice of volumes over which to compute the scalar and sectional curvatures, with the expressions (\ref{1R}) and (\ref{1K}) proved for these volumes. Assuming these, the Ricci curvature expression in (\ref{1Rc}) is then proved in section \ref{sec:Ric}, along with the piecewise flat Ricci flow equation. Finally, the triangulation details for the test manifolds are given in section \ref{sec:Comp}, along with graphs and error results for the computations. These results give strong support to the choice of volumes over which the curvatures are constructed, and to the expressions given in theorem \ref{thm:1.1} above.

\section*{Acknowledgements}

We acknowledge support for this research from the Air Force Research Laboratory (AFRL/RI) grant $\#$ FA8750-15-2-0047. We thank Paul Alsing, Chris Beetle and Vidit Nanda for many helpful discussions, Dan Knopf for suggesting investigation of the Nil-$3$ manifold and Christine Guenther and Mauro Carfora for introducing us to the Ricci flow of the Gowdy model.

\section{Piecewise Flat and Smooth Manifolds}
\label{sec:PF}

\subsection{Piecewise flat manifolds}

An $n$-dimensional simplicial piecewise-flat manifold is a collection of Euclidean $n$-simplices (line segments, triangles, tetrahedra), joined along their $(n-1)$-dimensional faces. Unlike other polyhedra, the properties of a flat Euclidean simplex is entirely determined by the lengths of its edges. The geometry of the whole manifold is then completely determined by the simplicial graph and the set of edge-lengths. This definition is made more precise below.

\begin{definition}[Piecewise flat manifolds]
\label{def:PLS}

\

\begin{enumerate}
\item A Euclidean $k$-simplex $\sigma^k \subset \mathbb{R}^k$ is the open interior of the  convex hull formed by $k + 1$ non-collinear points in $\mathbb{R}^k$. The convex hull itself is known as the closure of the simplex, denoted $\bar \sigma^k$.

\item A homogeneous $n$-complex is a simplicial $n$-complex where each simplex is either an $n$-simplex, or a face of an $n$-simplex.

\item A piecewise flat manifold $S^n$ is a homogeneous $n$-complex formed by Euclidean $n$-simplices, with the metric of each $\sigma^k \subset S^n$ consistent with the metrics of all $n$-simplices $\sigma^n$ where $\bar \sigma^n \supset \sigma^k$.

\item The $\mathrm{star}$ of a $k$-simplex $\sigma^k$ is the subspace of a piecewise flat manifold $S^n$ formed by the set of simplices $\sigma_i^m$ containing $\sigma^k$ in their closures, i.e. $\mathrm{star}(\sigma^k) = \{ \sigma_i^m | \bar \sigma_i^m \supset \sigma^k \}$.
\end{enumerate}
\end{definition}

The Euclidean metric of any given $n$-simplex can be smoothly developed to any neighbouring $n$-simplex, since the metric of the face between the two must be consistent with the Euclidean space of both. Deviations of $S^n$ from Euclidean space only then occur when a closed path contains one or more $(n-2)$-simplices. This deviation is measured at each $(n-2)$-simplex by comparing the sum of the dihedral angles between the $(n-1)$-simplices in its star, with a complete rotation in Euclidean space.

\begin{definition}[Deficit angles]
\label{def:DefAng}

\

\begin{enumerate}
\item The $(n-2)$-simplices in a piecewise flat manifold $S^n$ are known as \emph{hinges} and denoted by $h$.

\item In an $n$-simplex $\sigma^n$, the dihedral angle $\theta$ at a hinge $h \subset \bar \sigma^n$ is the angle between the two $(n-1)$-faces of $\bar \sigma^n$ containing $h$ in their closure, in a plane orthogonal to $h$.

\item The deficit angle $\epsilon_h$ at a hinge $h$ is given by subtracting from $2 \pi$ the sum of the dihedral angles $\theta_s$ at $h$ for each of the $n$-simplices $\sigma^n_s$ where $\bar \sigma^n_s \supset h$,
\begin{equation}\label{DefAng}
\epsilon_h := 2 \pi - \sum_{s | \sigma^n_s \supset h} \theta_s .
\end{equation}
\end{enumerate}
\end{definition}

\begin{figure}[h]
\begin{center}
\includegraphics[scale=0.25]{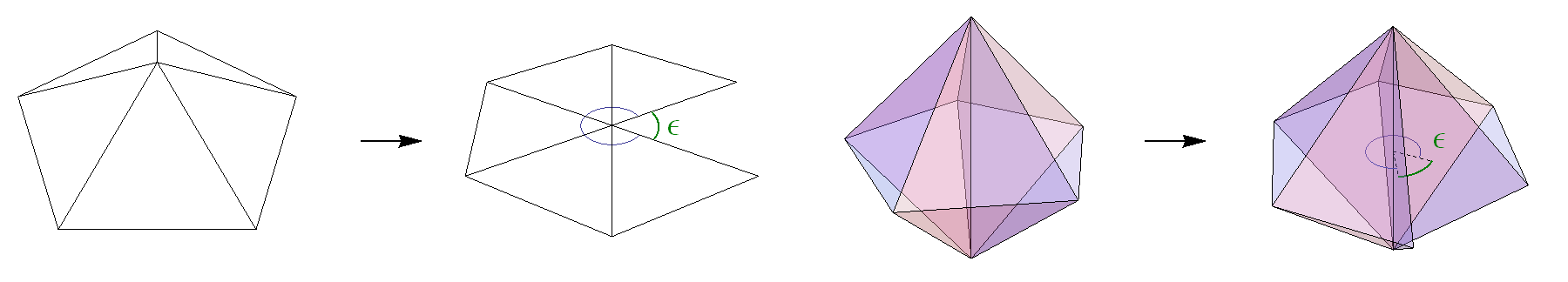}
\end{center}
\vspace{-0.5cm}
\caption{The stars of hinges in dimensions two and three, and embeddedings of their $n$-simplices in $\mathbb{R}^2$ and $\mathbb{R}^3$ respectively, with their deficit angles shown.}
\label{fig:2Dfig}
\end{figure}

The hinges of a piecewise flat manifold $S^n$ are known as \emph{conic singularities}, since in two dimensions the star of a hinge can be embedded in $\mathbb{R}^3$ as a cone shape, which is intrinsically flat everywhere but at its vertex. In higher dimensions the star of a hinge is isomorphic to the direct product of a two dimensional cone and the space of the hinge itself.

\subsection{Intrinsically approximating smooth manifolds}

It is common to approximate a smooth $n$-dimensional manifold $M^n$ by embedding it in a higher dimensional Euclidean space $\mathbb{R}^m$. Where possible, a piecewise flat manifold $S^n$ is then defined in $\mathbb{R}^m$, with either the vertices coinciding with the manifold $M^n$, the $n$-simplices of $S^n$ tangent to $M^n$, or some similar correspondence.

There is much to be gained however, by approximating smooth manifolds \emph{intrinsically}, without referring to any embedding. This can be done by forming the graph of a homogeneous simplicial complex on a smooth manifold $M^n$, and using the geodesic lengths of the edges to define a piecewise flat manifold $S^n$. This method of approximation was used at least as early as Regge \cite{Regge}, if not earlier.

\begin{definition}[Intrinsic triangulation]
\label{def:Triang}

A triangulation of a smooth manifold $M^n$ is given by a piecewise flat manifold $S^n$, defined according to the following procedure:
\begin{enumerate}
\item Construct the graph of a simplicial complex on $M^n$ by designating points for vertices, and using shortest-length geodesics between these points for the edges $\ell_M := \sigma^1_M \subset M^n$. The graph should be dense enough so that shortest-length geodesics are unique and well-defined.

\item Define a piecewise flat manifold $S^n$ using the same simplicial complex, with the lengths of the edges $\ell \subset S^n$ given by the geodesic lengths of the corresponding edges $\ell_M \subset M^n$,
\begin{equation}\label{Geodesic}
|\ell| := \int_{\ell_M} \sqrt{\tens{g}_M
 \left(\frac{d \ell_M}{d s}, \frac{d \ell_M}{d s}\right)}
 \ \mathrm{d} s ,
\end{equation}
for some parameter $s$ of the geodesic segment $\ell_M$, and $\tens{g}_M$ the metric on $M^n$.


\item Globally rescale all of the edge-lengths of $S^n$ by a constant factor, in order to give the same global volume as $M^n$.

\item A higher density of vertices in the simplicial complex will generally give higher resolutions of $M^n$, and hence better approximations. The size of the deficit angles throughout $S^n$ give an indication of the deviation of the regions around each hinge from flat space. Hence, smaller deficit angles indicate closer approximations to the smooth manifold, relating the resolution to the curvature rather than the $n$-volume of the manifold.
\end{enumerate}
\end{definition}



\begin{figure}[h]
\begin{center}
\includegraphics[scale=0.25]{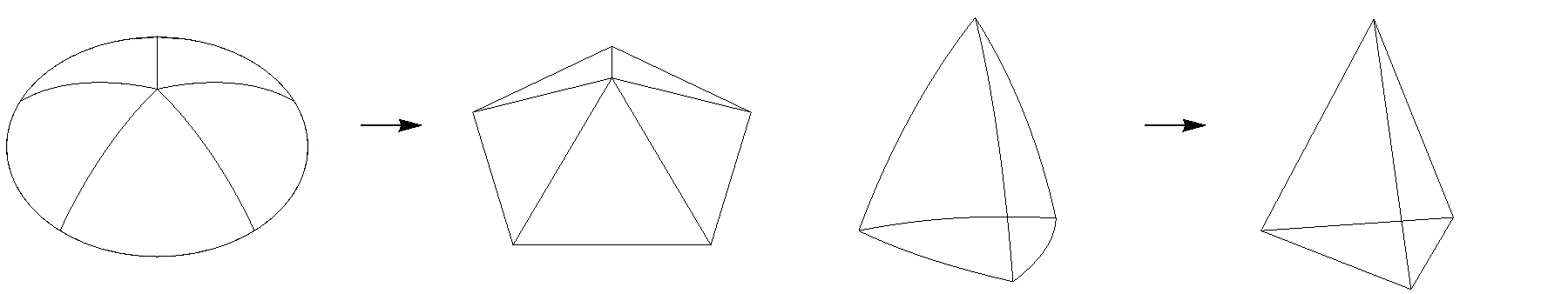}
\end{center}
\vspace{-0.5cm}
\caption{The outline of a simplicial complex on part of a $2$-sphere with its triangulation on the left, and a geodesic tetrahedron with its corresponding Euclidean tetrahedron on the right.}
 \label{fig:Approx}
\end{figure}

The set of squared edge-lengths $\{|\ell|^2\}$ can be seen as a discretization of the metric from equation (\ref{Geodesic}) above. While the simplicial complex gives the topological structure of a piecewise flat manifold $S^n$, the set of edge-lengths completely fixes the geometry of each simplex, and therefore that of $S^n$. More detail about the edge-lengths as the piecewise flat metric can be found in \cite{Sch}.


\label{sec:DefAng}



Since the deficit angles are computed orthogonally to each hinge, they can in some sense be seen as integrals of the projection of the Riemann curvature orthogonal to the hinges. Even in three dimensions this does not hold exactly however, since the orientation of a plane orthogonal to a geodesic segment $\ell_M$ will not in general be consistent along its length. The measure of the dihedral angle at an edge $\ell$ in a tetrahedron also depends purely on the \emph{lengths} of the geodesic segments in the closure of the corresponding smooth tetrahedron, and in no way to their relative orientations.

\begin{remark}
Both Friedberg \& Lee \cite{FriedLee} and Christiansen \cite{Christ} consider a one parameter family of smooth manifolds $M^n_t$ which limit to a given piecewise flat manifold $S^n$. This is in contrast to a family of piecewise flat triangulations of a given \emph{smooth} manifold. In their case, close to each hinge $h_M$, the Riemann curvature of $M^n_t$ approaches its projection orthogonal to $h_M$, giving the deficit angle $\epsilon_h$ when integrated over an appropriate surface orthogonal to $h_M$.	
\end{remark}

\subsection{Smooth curvature}
\label{sec:SmoothCurv}

On a smooth manifold $M^n$, the entire intrinsic curvature of the manifold is given by the Riemann curvature tensor. This tensor gives the change in a vector after it has been parallel transported around an infinitesimal closed path,
\begin{equation}	
\tens{Rm} (\vec{u}, \vec{v}) \vec{w}
 := \nabla_{\vec{u}} \nabla_{\vec{v}} \vec{w}
 - \nabla_{\vec{v}} \nabla_{\vec{u}} \vec{w} ,
\label{Riem}
\end{equation}
for orthogonal vector fields $\vec{u}$ and $\vec{v}$, with $\nabla$ representing the covariant derivative for the Levi-Civita connection on $M^n$. The sectional curvature gives a restriction of the Riemann curvature to a particular infinitesimal $2$-plane. Specifying two orthogonal unit vectors $\vec{u}$ and $\vec{v}$, the sectional curvature for the plane spanned by $\vec{u}$ and $\vec{v}$ is defined as
\begin{equation}	
K (\vec{u}, \vec{v})
 := \left< \tens{Rm} ( \vec{v}, \vec{u} ) \vec{u}, \vec{v} \right>
 = \left< \nabla_{\vec{v}} \nabla_{\vec{u}} \vec{u}
 - \nabla_{\vec{u}} \nabla_{\vec{v}} \vec{u}, \vec{v} \right> .
\label{Sec}
\end{equation}
with $\left< \cdot, \cdot \right>$ representing the inner product associated with the metric $\tens{g}_M$ on $M^n$. The Ricci and scalar curvatures are given by contractions of the Riemann tensor. For any orthonormal basis $\{ \vec{e_1}, \vec{e_2}, ... , \vec{e_n} \}$ in a neighbourhood of $p \in M^n$, the Ricci tensor at $p$ is
\begin{equation}
\tens{Rc}(\vec{e_i}, \vec{e_j})
 := \sum_{k = 1}^n \left< \tens{Rm}(\vec{e_k}, \vec{e_i}) \vec{e_j},
   \vec{e_k} \right> ,
\label{Ric}
\end{equation}
with the scalar curvature given by a further contraction of the Ricci tensor,
\begin{equation}
R :=
 \sum_i^n \tens{Rc}(\vec{e_i}, \vec{e_i})
 = \sum_i^n \sum_j^n
 \left< \tens{Rm} (\vec{e_j}, \vec{e_i}) \vec{e_i}, \vec{e_j} \right>
 = 2 \sum_{i < j}
 K(\vec{e_i}, \vec{e_j}) .
\label{Scal}
\end{equation}
The last part above comes directly from the definition of the sectional curvature (\ref{Sec}), showing the scalar curvature to be twice the sum of the sectional curvatures for a complete set of orthogonal $2$-planes.


Since a triangulation of a smooth manifold can be seen as a discretization, it is the average of curvature components over a discrete set of volumes that should give a correlation between the piecewise flat and smooth manifolds. The choice of this set of volumes will depend on the type of curvature, but should contain an appropriate sampling of hinges in order to compare properly with the smooth curvature. The average of each curvature component over a given volume can then be found from their integrals over that volume.

Sectional curvature integrals over almost-flat $2$-surface regions are well-defined in terms of the parallel transport of vectors around the region boundaries, a straight forward procedure within a piecewise flat manifold.
 Conveniently, the curvatures above can all be defined in terms of the sectional curvatures over certain collections of $2$-planes.
 For volumes which can be foliated into parallel almost-flat $2$-surfaces, the integrals of these curvatures can be given in terms of the integrated sectional curvatures over these surfaces.
 Such foliations and $2$-surface sectional curvature integrals will be covered in the next section.

\section{Local Integrated Sectional Curvature}
\label{sec:IK}

\subsection{Smooth integrated sectional curvature}

In a smooth manifold, the integral of the sectional curvature over a region of an almost-planar $2$-surface can be defined in terms of the parallel transport of a vector around the boundary of the region.

\begin{lemma}[Smooth integrated sectional curvature]
\label{lem:IntSecM}
Take a region $D$ of the $2$-surface $P(\vec{u},\vec{v})$ spanned by the orthonormal vector fields $\vec{u}$ and $\vec{v}$, where $P(\vec{u},\vec{v})$ is almost a flat $2$-plane within $D$. The integral of the sectional curvature over $D$ can be approximated by the $\vec{v}$ component of the parallel transport of $\vec{u}$ around $\partial D$, the boundary of $D$,
\begin{equation}
\int_D K(\vec{u},\vec{v}) \, \mathrm{d} A
 \ \simeq \ \left< \oint_{\partial D}
 \mathrm{d} \vec{s}^a \, \nabla^{P}_a \vec{u}, \, \vec{v} \right> ,
 \label{IntKM}
\end{equation}
with $s$ representing some parameter of the boundary $\partial D$, and $\nabla^{P}$ the covariant derivative of the Levi-Civita connection intrinsic to $P(\vec{u},\vec{v})$.
\end{lemma}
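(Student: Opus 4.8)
The plan is to recognize equation (\ref{IntKM}) as the two-dimensional correspondence between holonomy and curvature, restricted to the surface $P(\vec{u},\vec{v})$, and to derive it by converting the boundary line integral into a surface integral via Stokes' theorem. First I would interpret the right-hand side: the quantity $\oint_{\partial D} \mathrm{d}\vec{s}^a \, \nabla^P_a \vec{u}$ measures the net covariant change accumulated by $\vec{u}$ as it is carried around $\partial D$ using the intrinsic connection $\nabla^P$, so that its $\vec{v}$-component records the infinitesimal rotation (the holonomy angle) of $\vec{u}$ within the tangent plane of $P$. Because $\vec{u}$ and $\vec{v}$ are orthonormal and span $P$, a small rotation of $\vec{u}$ in this plane moves it into the $\vec{v}$ direction, which is precisely what the inner product with $\vec{v}$ extracts.

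Next I would apply the generalized Stokes theorem to each component $u^c$, regarding $\nabla^P_a u^c$ as a one-form on $D$ for fixed $c$. To leading order in the almost-flat approximation this gives $\oint_{\partial D} \nabla^P_a u^c \, \mathrm{d} s^a = \int_D \epsilon^{ab}\,\nabla^P_a \nabla^P_b u^c \, \mathrm{d} A$, where $\epsilon^{ab}$ is the intrinsic area element and the antisymmetrization produces the commutator $[\nabla^P_a, \nabla^P_b] u^c$. By the Ricci identity this commutator is exactly the intrinsic Riemann tensor of $P$ acting on $\vec{u}$, so the surface integrand becomes $\langle \tens{Rm}^P(\cdot,\cdot)\vec{u}, \vec{v}\rangle$. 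Choosing the coordinate directions on $D$ aligned with the orthonormal frame $\{\vec{u},\vec{v}\}$ and fixing the orientation of $\partial D$, this integrand reduces, by the definition (\ref{Sec}), to the sectional curvature $K(\vec{u},\vec{v})$ of the plane $P(\vec{u},\vec{v})$, yielding $\int_D K(\vec{u},\vec{v})\,\mathrm{d}A$.

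The step requiring the most care — and the reason for both the almost-planar hypothesis and the approximate equality $\simeq$ — is the identification of the \emph{intrinsic} curvature of $P(\vec{u},\vec{v})$, the object naturally produced by $\nabla^P$ together with Stokes' theorem, with the \emph{ambient} sectional curvature $K(\vec{u},\vec{v})$ of $M^n$ appearing on the left-hand side. By the Gauss equation these differ by terms quadratic in the second fundamental form of $P$ inside $M^n$; the assumption that $P(\vec{u},\vec{v})$ is almost a flat $2$-plane within $D$ forces this extrinsic contribution to be of higher order, so that the intrinsic and ambient sectional curvatures agree to leading order. I expect this control of the second fundamental form terms, together with the orientation and sign bookkeeping in passing from the commutator to $K(\vec{u},\vec{v})$ through (\ref{Sec}), to be the main obstacle; the remaining manipulations — Stokes' theorem, the Ricci identity, and the frame alignment — are routine at leading order.
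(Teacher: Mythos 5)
Your proposal is correct and follows essentially the same route as the paper: Stokes' theorem converts the boundary holonomy of $\vec{u}$ under $\nabla^{P}$ into the surface integral of the curvature, with the almost-flat hypothesis justifying the leading-order approximations. The one place you go beyond the paper is in explicitly invoking the Gauss equation to identify the intrinsic curvature of $P(\vec{u},\vec{v})$ (which is what $\nabla^{P}$ and Stokes naturally produce) with the ambient sectional curvature $K(\vec{u},\vec{v})$; the paper's proof absorbs this point silently into the $\simeq$ by writing the ambient $\tens{Rm}_{ab}$ directly in the Stokes step.
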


The index notation is used to give components in some orthonormal basis, and the Einstein summation convention is assumed for indices appearing both above and below.

\begin{proof}
The values of the sectional curvature $K(\vec{u}, \vec{v})$, for the tangent planes formed by $\vec{u}$ and $\vec{v}$ at each point, can be integrated over $D$ giving the expression
\begin{equation}
\int_D K(\vec{u},\vec{v}) \, \mathrm{d} A
 = \int_D \left<
  \tens{Rm}(\vec{v}, \vec{u})\vec{u}, \vec{v} \right> \mathrm{d} A
 \ \simeq \ \left< \int_D
   (\mathrm{d} \vec{v} \wedge
   \mathrm{d} \vec{u})^{a b} \
   \tens{Rm}_{a b} \vec{u}, \,
   \vec{v} \right>_p ,
\label{IntSecMproof}
\end{equation}
with the approximate equivalence holding for any $p \in D$, as long as the inner product on the right is invariant to the choices of $\vec{u}$, $\vec{v}$ and the point $p$, to some degree of approximation. This will be the case where the inner product is close to Euclidean, i.e. when the region $D$ is almost a flat $2$-plane. Using Stoke's theorem, the integral of the Riemann tensor can be reduced to a contour integral around $\partial D$,
\begin{equation}
\int_D (\mathrm{d} \vec{v} \wedge \mathrm{d} \vec{u})^{a b} \
   \tens{Rm}_{a b} \vec{u}
 \ \simeq \ \oint_{\partial D}
 \mathrm{d} \vec{s}^a \, \nabla^{P}_a \vec{u} ,
\end{equation}
with the required result following from substitution into (\ref{IntSecMproof}) above.

\end{proof}

Over suitable $2$-surfaces in a piecewise flat manifold $S^n$ this relation will be used as a local definition of the integrated sectional curvature.
 This definition is considered \emph{too} local for comparison with the smooth sectional curvature, for example a region which contains no hinges will be Euclidean, giving a zero sectional curvature integral.
 However it will be used in sections \ref{sec:Sca} and \ref{sec:Sec} to construct appropriate \emph{volume} integrals of the scalar and sectional curvatures, which should be comparable to their corresponding smooth values.

\subsection{Two dimensions and hinge-orthogonal surfaces}

In two dimensions, the parallel transport of any vector around a closed path enclosing a single vertex $v$ is rotated by exactly the deficit angle $\epsilon_v$. A region $D$ enclosing $v$ and no other vertices will be part of a $2$-surface that is locally isomorphic to $\mathbb{R}^2$ everywhere but at $v$. The integral of the sectional curvature over $D$ is then given by the inner product of the parallel transported vector, with a vector which was initially orthogonal to it. This can easily be seen to give
\begin{equation}
\int_{D} \, ^{(2)}K \ \mathrm{d} A
 = \sin \epsilon_{v}
 = \epsilon_{v} + O(\epsilon_h^3) ,
\label{IK2D}
\end{equation}
with the first term alone giving a good approximation for small deficit angles.

This result can easily be extended to higher dimensions for surfaces which are orthogonal to a hinge $h$.

\begin{lemma}[Hinge-orthogonal planes and sectional curvature]
\label{lem:PiPerp}
\label{lem:IKPerp}
In a piecewise flat manifold $S^n$, the star of a hinge $h$ can be foliated into parallel $2$-surfaces $P^\perp_h$ orthogonal to $h$, which are isomorphic to $\mathbb{R}^2$ everywhere but at their intersection with $h$. The integrated sectional curvature over any area $D \subset P^\perp_h$ enclosing $h$ is then
\begin{equation}
{\cal K}^\perp_h
 := \int_D K (P^\perp_h) \ \mathrm{d} A
 = \sin \epsilon_h
 = \epsilon_{h} + O(\epsilon_h^3) .
 \label{IKPerp}
\end{equation}
\end{lemma}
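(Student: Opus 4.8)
The plan is to reduce the claim to the two-dimensional result already established in (\ref{IK2D}), by exploiting the product structure of the star of a hinge. Recall from the discussion of conic singularities that $\mathrm{star}(h)$ is isomorphic to the metric product $h \times C$ of the flat $(n-2)$-simplex $h$ with a two-dimensional cone $C$. First I would make this product structure explicit: within each $n$-simplex $\sigma^n_s \supset h$ of the star, choose an orthonormal frame adapted to $h$, consisting of $n-2$ directions tangent to $h$ together with two directions spanning the plane orthogonal to $h$. Since each simplex is flat and the metric on each shared $(n-1)$-face is consistent across the two simplices it bounds (Definition \ref{def:PLS}), the gluing identifications act as the identity on the hinge directions and as a rotation by the dihedral angle $\theta_s$ in the orthogonal plane. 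This exhibits $\mathrm{star}(h)$ as $h \times C$ and shows that the surfaces $P^\perp_h$, obtained as the slices $\{q\} \times C$ for $q \in h$, are mutually isometric, orthogonal to $h$, and intrinsically flat away from the single apex at which they meet $h$. This is the asserted foliation.

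Next I would identify the cone $C$ precisely. Transporting the adapted frame around the apex within a slice, the orthogonal plane rotates successively by each dihedral angle, so the total angle swept is $\sum_{s} \theta_s$ and the cone angle of $C$ is exactly $2\pi - \epsilon_h$ by the definition of the deficit angle (Definition \ref{def:DefAng}). Thus each slice $P^\perp_h$ is, intrinsically, a flat two-dimensional cone whose single conic singularity carries deficit angle $\epsilon_h$. Because the slices are factors of a metric product they are totally geodesic, so their second fundamental form vanishes and the Gauss equation identifies the ambient sectional curvature $K(P^\perp_h)$ along the tangent plane of a slice with the intrinsic Gaussian curvature of that slice.

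With these identifications the computation is identical to the two-dimensional case. For a region $D \subset P^\perp_h$ enclosing the apex $h \cap P^\perp_h$, Lemma \ref{lem:IntSecM} expresses $\int_D K(P^\perp_h)\,\mathrm{d}A$ as the component, along a vector initially orthogonal to it, of a unit vector parallel transported once around $\partial D$ inside the slice. Since the slice is flat except at the apex with cone angle $2\pi - \epsilon_h$, this parallel transport rotates the vector by exactly $\epsilon_h$, and the orthogonal component is $\sin\epsilon_h$. This is precisely the content of (\ref{IK2D}) applied to the slice, giving
\begin{equation*}
{\cal K}^\perp_h = \int_D K(P^\perp_h)\,\mathrm{d}A = \sin\epsilon_h = \epsilon_h + O(\epsilon_h^3),
\end{equation*}
as required, with independence from the particular choice of $D$ following because any two such regions differ by a hinge-free, hence flat, annular region contributing nothing to the integral.

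I expect the main obstacle to be the first step: rigorously establishing the metric product decomposition $\mathrm{star}(h) \cong h \times C$ and the total geodesy of the slices, since everything else follows by direct reduction to the planar result. The delicate point is to verify that the gluing maps across the $(n-1)$-faces genuinely preserve the splitting into hinge-tangent and hinge-orthogonal directions, so that the induced metric on each slice really is a flat cone with no dependence on the position $q \in h$ and no shear coupling the two factors.
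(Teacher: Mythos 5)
Your proposal is correct and follows essentially the same route as the paper: establish the foliation by orthogonal $2$-surfaces (the paper does this by noting that flat $2$-planes orthogonal to $h$ within each Euclidean $n$-simplex coincide across shared faces), observe that each slice behaves as the star of a hinge in two dimensions with deficit angle $\epsilon_h$, and conclude that parallel transport around the apex rotates a vector by exactly $\epsilon_h$, giving $\sin\epsilon_h$. Your explicit product decomposition $\mathrm{star}(h)\cong h\times C$ and the appeal to total geodesy of the slices simply formalize what the paper states more briefly (that the transported vector "remains in $P_h^\perp$"), so the two arguments are the same in substance.
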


\begin{proof}
Each $n$-simplex $\sigma^n_h$ in the star of $h$ can be foliated into flat $2$-planes orthogonal to $h$, since these simplices are Euclidean and contain $h$ in their closure. Orthogonality is consistent across any pair of simplices, so the $2$-planes orthogonal to $h$ at each point $p \in h$ must coincide across all of the $n$-simplices $\sigma^n_h$. Hence, for each $p \in h$ there is a unique $2$-surface orthogonal to $h$ across its star, which is isomorphic to $\mathbb{R}^2$ everywhere but at $p \in h$. These surfaces must then form a foliation of the star of $h$.

Since deficit angles are defined orthogonally to each hinge, each plane $P_h^\perp$ acts like the star of a hinge in $2$-dimensions. The parallel transport of a unit vector $\vec{u} \in P_h^\perp$ around $h$ remains in $P_h^\perp$ and is rotated by $\epsilon_h$ exactly. The inner product of this with a unit vector $\vec{v} \in P^\perp_h$ orthogonal to $\vec{u}$ then gives $\sin \epsilon_h$, independent of the point at which the inner product is taken and the choice of vectors $\vec{u}$ and $\vec{v}$.

\end{proof}

\subsection{Non-hinge-orthogonal surfaces}

In dimensions greater than two, there are other possible orientations of surfaces with respect to a hinge. The star of a hinge cannot be foliated into \emph{planes} if they are not orthogonal to $h$ however. Instead, a generalization of almost-planar $2$-surfaces is given below, with the integral of the sectional curvature over such surfaces then shown to be consistent to leading order in the deficit angle.

\begin{lemma}[Non-hinge-orthogonal $2$-surfaces]
\label{lem:Pitheta}
The star of a hinge $h$, with a small deficit angle $\epsilon_h$, can be foliated into parallel almost-planar $2$-surfaces $P^\theta_h$ which make an angle of $\theta$ with the surfaces $P^\perp_h$. These $2$-surfaces are locally isomorphic to $\mathbb{R}^2$ everywhere but at their intersection point with $h$, and along a single radial line $k$ from $h$.

The line $k$ is directly opposite from the radial line which makes an angle $\theta$ with $P^\perp_h$. Within the star of $h$, the surface $P^\theta_h$ forms a corner along $k$ which deviates from $\pi$ by a maximum of $\epsilon_h$, attained as $\theta \rightarrow \pi/2$.
\end{lemma}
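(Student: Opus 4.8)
The plan is to work in the intrinsically flat ``developed'' picture of $\mathrm{star}(h)$ supplied by Lemma \ref{lem:IKPerp}, which foliates the star into the orthogonal leaves $P^\perp_h$, each a flat cone of total angle $2\pi - \epsilon_h$ about its intersection with $h$. I would cut one such cone along a single radial line $k$ and develop it into a planar sector $\{(r,\phi):r>0,\ 0\le\phi\le 2\pi-\epsilon_h\}$, with the edges $\phi=0$ and $\phi=2\pi-\epsilon_h$ reglued by the cone identification. Writing $z$ for the coordinate along $h$ (the remaining hinge directions entering only as a product factor), I define $P^\theta_h$ over one leaf as the graph $z=r\cos(\phi-\phi_0)\tan\theta$, which in the developed Cartesian coordinates is the linear height $z=\tan\theta\,(\cos\phi_0\,x+\sin\phi_0\,y)$; translating along $h$ gives the parallel foliation. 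By construction the developed gradient has constant magnitude $\tan\theta$, so each leaf meets $P^\perp_h$ at the constant angle $\theta$, with line of steepest ascent in the radial direction $\phi=\phi_0$.

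First I would fix $\phi_0$ by demanding that this graph descend to a \emph{continuous} surface on the reglued cone. The identification sends $(r,0)$ to $(r,2\pi-\epsilon_h)$, so continuity of the height forces $\cos\phi_0=\cos(\phi_0+\epsilon_h)$, hence $\phi_0=\pi-\tfrac12\epsilon_h$. Thus the steepest-ascent line bisects the developed sector and sits diametrically opposite the cut at $\phi=0$; this cut is $k$, and the relation $\phi_0=\pi-\tfrac12\epsilon_h$ is precisely the statement that $k$ lies directly opposite the radial line making angle $\theta$ with $P^\perp_h$. Away from the apex and from $k$ the surface is an open piece of a genuine flat plane, so it is locally isomorphic to $\mathbb{R}^2$; and since $k$ is a straight line in each of the two flat halves, the intrinsic cone angle across it is $\pi+\pi=2\pi$, i.e. the surface is intrinsically flat along $k$. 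The failure to be isomorphic to $\mathbb{R}^2$ along $k$ is therefore the extrinsic crease: the otherwise constant developed gradient is rotated by the cone holonomy $\epsilon_h$ when carried across the cut, so the surface folds there.

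It remains to measure the departure of this fold from straightness, which is the main computation. I would read ``the corner deviates from $\pi$'' as the dihedral deviation of the fold, i.e. the angle $\delta$ between the upward normals of the two flat half-surfaces meeting along $k$. In a common frame near $k$ these sides carry planar gradients $\nabla_A=\tan\theta(\cos\phi_0,\sin\phi_0)$ and $\nabla_B=\tan\theta(\cos(\phi_0+\epsilon_h),\sin(\phi_0+\epsilon_h))$, with upward normals proportional to $(-\nabla_A,1)$ and $(-\nabla_B,1)$. A direct evaluation using $\phi_0=\pi-\tfrac12\epsilon_h$ gives
\[
\cos\delta=\cos^2\theta+\sin^2\theta\,\cos\epsilon_h=1-2\sin^2\theta\,\sin^2\!\big(\tfrac12\epsilon_h\big),
\]
equivalently $\sin(\tfrac12\delta)=\sin\theta\,\sin(\tfrac12\epsilon_h)$. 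Hence $\delta$ increases monotonically in $\theta$ on $[0,\pi/2]$ and satisfies $\delta\le\epsilon_h$, with equality exactly at $\theta=\pi/2$, which is the claimed maximum attained as $\theta\to\pi/2$.

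I expect the principal obstacle to be conceptual rather than computational: identifying \emph{which} angle is meant by the corner. The naive intrinsic or in-surface transverse measurement yields $\epsilon_h\tan\theta$, which is unbounded as $\theta\to\pi/2$ and inconsistent with the statement; only the dihedral fold angle between the two face normals produces the clean relation $\sin(\tfrac12\delta)=\sin\theta\,\sin(\tfrac12\epsilon_h)$ and the stated bound. Pinning $\phi_0$ so that the developed graph closes up continuously is the structural key that both locates $k$ opposite the steepest-ascent line and makes the two sides' gradients differ by exactly the holonomy rotation $\epsilon_h$; once that is in place the trigonometric simplification is routine.
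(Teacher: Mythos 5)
Your proposal is correct and follows essentially the same route as the paper: develop the cone, define the leaf by the radial slope $\cos(\phi-\phi_0)\tan\theta$, locate $k$ opposite the steepest-ascent line via the closure condition at $\phi_0=\pi-\epsilon_h/2$, and compute the fold angle along $k$. Your normal-vector formula $\sin(\delta/2)=\sin\theta\,\sin(\epsilon_h/2)$ is exactly equivalent to the paper's $\tan(\psi/2)=\sin(\epsilon_h/2)\tan\theta\big/\sqrt{1+\cos^2(\epsilon_h/2)\tan^2\theta}$ (since $A^2+B^2=\sec^2\theta$ there), so both give $\psi\to\epsilon_h$ as $\theta\to\pi/2$.
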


\begin{figure}[h]
\begin{center}
\includegraphics[scale=0.25]{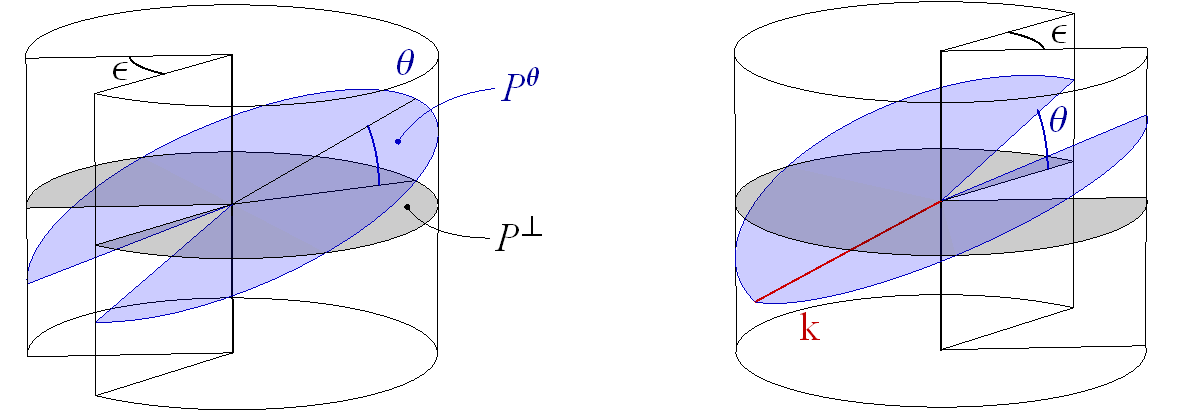}
\end{center}
\vspace{-0.5cm}
\caption{For a hinge in a three dimensional piecewise flat manifold, the diagrams give the embedding of a cylindrical subspace of $\mathrm{star}(h)$ into $\mathbb{R}^3$ showing $P_h^\perp$ and $P^\theta_h$. The deficit angle is cut out at $k$ in the first image and on the opposite side for the second, showing the corner along $k$.}
\label{fig:Cyl}
\end{figure}

\begin{proof}
Within each $n$-simplex in the star of a hinge $h$, $2$-planes at an angle $\theta$ to $P^\perp_h$ are well-defined with complete freedom in the direction of their gradient with respect to $P^\perp_h$. As with any vector, the direction of the gradient in $P^\perp_h$ can be consistently defined across the boundaries between neighbouring $n$-simplices, but gets rotated by the deficit angle $\epsilon_h$ when a hinge $h$ is enclosed. As a result, a $2$-plane at an angle $\theta$ to the orthogonal plane $P^\perp_h$ cannot be consistently defined throughout the star of $h$.

Taking a line passing through the point $p \in h$ at an angle $\theta$ to the surface $P_h^\perp (p)$, a $2$-plane containing this line can be defined locally so that it makes and angle $\theta$ with $P_h^\perp$ everywhere. This $2$-plane can be developed around both sides of $h$, with the radial lines from $p$ having a slope of $\cos \phi \, \tan \theta$, where the angle $\phi$ is measured in $P_h^\perp$ from the initial line. This can be seen in the first diagram in figure \ref{fig:thCalc}. Due to the deficit angle of the hinge, the angles $\phi = \pm (\pi - \epsilon/2)$ represent the same part of $P_h^\perp$. The radial lines of this surface from each side of $h$ make the same angle $\alpha$ with $P_h^\perp$ such that
\begin{equation}
\tan \alpha
 = \cos \pm (\pi - \epsilon/2) \, \tan \theta
 = \cos \epsilon/2 \, \tan \theta ,
\end{equation}
and since both lines also contain the same point $p \in h$, they must be equivalent. This line will be denoted $k$, and shows that the surface closes in the star of $h$. These surfaces, denoted $P_h^\theta$, exist for each $p \in h$ and with the lines $k$ coinciding with respect to $P_h^\perp$, they are parallel and form a foliation of $\mathrm{star}(h)$.

\begin{figure}[h]
\begin{center}
\includegraphics[scale=0.25]{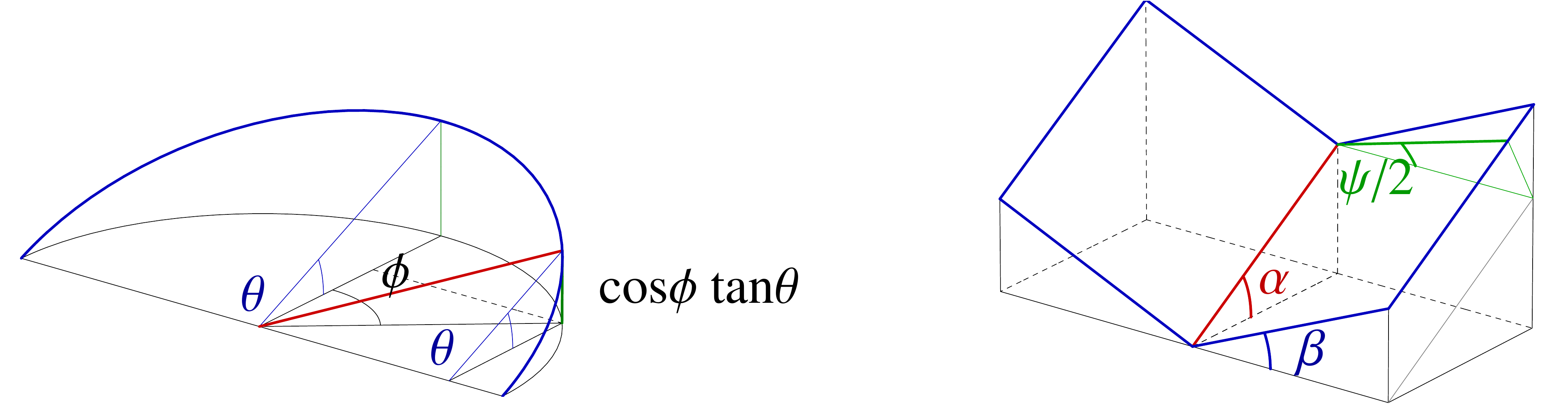}
\end{center}
\vspace{-0.5cm}
\caption{The first diagrams shows the slope of the radial lines of $P_h^\theta$ with respect to $P_h^\perp$ using part of a unit circle in the later. The second shows the computation of the angle along $k$.}
\label{fig:thCalc}
\end{figure}

The slope of the surface $P_h^\theta$ with respect to $P_h^\perp$, in a direction orthogonal to the radial direction in $P_h^\perp$, is not the same from both sides of $k$ however. The slopes from each side can be defined in terms of the angle $\beta$, which is given by the derivative of the slopes of the radial lines in $P_h^\theta$ with respect to $\phi$,
\begin{equation}
\tan \beta
 = \left. \frac{\partial}{\partial \phi}
   \cos \phi \, \tan \theta \right|_{\phi = \pm (\pi - \epsilon/2)}
 = \pm \sin \epsilon/2 \, \tan \theta .
\end{equation}
The surface $P_h^\theta$ must therefore form a \emph{corner} along $k$ within the space of $\mathrm{star}(h)$. This can be seen in the right-hand diagrams of both figures \ref{fig:Cyl} and \ref{fig:thCalc}. It can also be seen from figure \ref{fig:thCalc} that this corner deviates from $\pi$ by an angle $\psi$ satisfying the equation
\begin{equation}
\tan \psi/2
 = \cos \alpha \, \tan \beta = \frac{\sin \epsilon/2 \, \tan \theta}
        {\sqrt{1 + \cos^2 \epsilon/2 \, \tan^2 \theta}} .
\label{psi}
\end{equation}
The angle $\psi$ can easily be seen to vanish at $\theta = 0$, where $P_h^\theta$ coincides with $P_h^\perp$. Its maximum absolute value then occurs as $\theta \rightarrow \pm \pi/2$, with $\psi \rightarrow \pm \epsilon$.

\end{proof}

\begin{theorem}[Non-hinge-orthogonal integrated sectional curvature]
\label{thm:IKtheta}
Within the star of a hinge $h$, the integrated sectional curvature over any area $D \subset P^\theta_h$ intersecting $h$ is consistent to leading order in $\epsilon_h$, and is given by
\begin{equation}
{\cal K}^\theta_h
 := \int_D K (P^\theta_h) \, \mathrm{d} A
 = \cos \theta \, \epsilon_h + O(\epsilon_h^2) .
 \label{IKAngPlane}
\end{equation}
\end{theorem}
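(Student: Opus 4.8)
The plan is to reduce the surface integral to a single conical deficit at the point $p = P^\theta_h \cap h$ and to evaluate it from the flat, ruled structure of $P^\theta_h$. By Lemma \ref{lem:IntSecM}, $\int_D K(P^\theta_h)\,\mathrm{d}A$ equals, to the stated accuracy, the sine of the angle through which a tangent vector is rotated when parallel transported around $\partial D$ within the surface. By Lemma \ref{lem:Pitheta} the surface is intrinsically flat except at $p$ and along the radial crease $k$, so this holonomy is a rotation about a cone point and equals the conical deficit $2\pi - \Phi$, where $\Phi$ is the total cone angle of $P^\theta_h$ at $p$. Before computing $\Phi$ I would dispose of the crease: along $k$ the dihedral deviation $\psi$ of (\ref{psi}) is constant and the two adjoining sheets are flat, so a loop about any interior point of $k$ bounds two flat half-disks and closes with total angle $2\pi$. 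Hence $k$ carries no intrinsic curvature and the whole integral reduces to the deficit at $p$.

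Next I would compute $\Phi$ from the radial rulings of $P^\theta_h$. Embedding a cylindrical neighbourhood of $\mathrm{star}(h)$ in $P^\perp_h \times \mathbb{R}$ as in figure \ref{fig:Cyl}, the radial line at base angle $\phi$ (measured in $P^\perp_h$ from the tilt direction) rises with slope $\cos\phi\,\tan\theta$ and so points along $(\cos\phi,\,\sin\phi,\,\cos\phi\,\tan\theta)$. Since the surface is flat between neighbouring rays, $\Phi$ is the sum of the ambient angles between them, and a short computation of the angle between adjacent direction vectors gives
\begin{equation}
\mathrm{d}\Phi = \frac{\sec\theta}{1 + \cos^2\phi\,\tan^2\theta}\,\mathrm{d}\phi .
\end{equation}
By Lemma \ref{lem:Pitheta} the identification $\phi = \pm(\pi - \epsilon_h/2)$ removes a wedge of angular width $\epsilon_h$ from $P^\perp_h$ on the side opposite the tilt, centred on $\phi = \pi$, so $\phi$ sweeps an interval of length $2\pi - \epsilon_h$ about $\phi = 0$.

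Finally I would read off the deficit. Since $\int_{-\pi}^{\pi}(1 + \cos^2\phi\,\tan^2\theta)^{-1}\,\mathrm{d}\phi = 2\pi\cos\theta$, the full range would give $\Phi = 2\pi$, and the deficit is therefore the part of $\mathrm{d}\Phi$ over the removed wedge,
\begin{equation}
2\pi - \Phi = \sec\theta \int_{\mathrm{wedge}} \frac{\mathrm{d}\phi}{1 + \cos^2\phi\,\tan^2\theta} ,
\end{equation}
with the wedge of width $\epsilon_h$ sitting at $\phi = \pi$, where the integrand equals $\cos^2\theta$. This yields a deficit $\sec\theta\cdot\cos^2\theta\cdot\epsilon_h = \cos\theta\,\epsilon_h$, the higher terms — from the $\phi$-variation of the integrand, the sine expansion, and the almost-planar approximation of Lemma \ref{lem:IntSecM} — all entering at $O(\epsilon_h^2)$, which gives (\ref{IKAngPlane}). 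As a consistency check, $\theta = 0$ returns $\epsilon_h$, agreeing with Lemma \ref{lem:IKPerp} to leading order.

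I expect the main obstacle to be the first step, namely establishing cleanly that the crease $k$ adds nothing to the intrinsic curvature so that the integral collapses to the single conical deficit at $p$. Closely tied to this is the correct placement of the removed wedge: it is exactly because the deficit is excised on the side opposite the tilt, where $\cos^2\phi = 1$, that the leading coefficient is $\cos\theta$ rather than some average of the $\phi$-dependent integrand, and this must be read off carefully from the identification in Lemma \ref{lem:Pitheta}.
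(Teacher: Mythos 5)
Your argument is correct, and it takes a genuinely different route from the paper. The paper proves Theorem \ref{thm:IKtheta} by brute force: it writes down the basis vectors $\vec{a}, \vec{b}$ of $P^\theta_h$ in cylindrical coordinates, applies the explicit rotations by $\psi$ about $k$ and by $\epsilon_h$ about $h$, and expands the inner products $\left<T(\vec{u}), \vec{v}\right>$ for a general $\vec{u} = \cos\beta\,\vec{a} + \sin\beta\,\vec{b}$, finding $\cos\alpha\,\epsilon_h$ plus a $\beta$-dependent $O(\epsilon_h^2)$ remainder, with $\cos\alpha = \cos\theta$ to leading order. You instead observe that the crease along $k$ is a dihedral bend of a developable surface and so carries no intrinsic curvature, reducing the whole holonomy to the cone deficit $2\pi - \Phi$ at the single cone point $p$, and you compute $\Phi$ by integrating the apex-angle density $\mathrm{d}\Phi = \sec\theta\,(1+\cos^2\phi\tan^2\theta)^{-1}\mathrm{d}\phi$ over the rulings; I have checked this density, the normalization $\int_{-\pi}^{\pi}\mathrm{d}\Phi = 2\pi$, and the evaluation of the excised wedge at $\phi = \pi$ where the integrand equals $\cos\theta$. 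Your route buys two things the paper's does not make manifest: the consistency claim is automatic, since the intrinsic holonomy about a cone point is exactly a planar rotation and hence independent of which tangent vector is transported (the paper's $\beta$-dependent $O(\epsilon_h^2)$ term is an artifact of its ambient-space bookkeeping), and the origin of the $\cos\theta$ coefficient is transparent — it is the value of the angular density precisely where Lemma \ref{lem:Pitheta} excises the wedge, which also makes clear, as you note, that the leading-order answer depends on that placement. What the paper's computation buys in exchange is the explicit form of the higher-order terms, the comparison with the naive transport taken in $\mathrm{star}(h)$ rather than in $P^\theta_h$, and an explicit final paragraph extending the argument to hinges of dimension $n-2 > 1$ by factoring out the orthogonal complement in $h$; your argument, as written in the product $P^\perp_h \times \mathbb{R}$, covers the three-dimensional case and would need that same one-line factorization remark to match the generality of the stated theorem.
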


\begin{proof}
For parallel transport within $P_h^\theta$, first consider the parallel transport of a set of basis vectors for $P_h^\theta$ such that one is parallel to $k$ and the other orthogonal to $k$ form a given side. In order to parallel transport the vectors within the surface $P^\theta_h$, the vector orthogonal to $k$ will have to be rotated in the ambient Euclidean space by an angle of $\psi$ at $k$, while the vector parallel with $k$ will remain unaffected by the corner. The remainder of the parallel transports can be performed within the Euclidean space of each $n$-simplex, since the surface $P^\theta_h$ is locally a Euclidean $2$-plane away from $k$.

In three dimensions, a cylindrical type coordinate system $(\rho, \phi, z)$ can be defined in the star of $h$, with $z$ parameterizing $h$, $\rho$ giving the radial distance from $h$ and $\phi$ periodic in $2 \pi - \epsilon_h$. The unit vectors $\vec{a}, \vec{b} \in P^\theta_h$, with $\vec{a}$ parallel to $k$ and $\vec{b}$ orthogonal to it, are given in this coordinate basis as
\begin{equation}
\vec{a} := (\cos \alpha, \, 0, \, - \sin \alpha), \qquad
\vec{b} := (
 - \sin \alpha \, \sin \psi/2, \,
   \cos \psi/2, \,
 - \cos \alpha \, \sin \psi/2
),
\end{equation}
where $\alpha = \arctan (\cos \epsilon/2 \, \tan \theta)$ is the angle between $k$ and the $\rho$-direction. Transporting $\vec{b}$ parallel to $P^\theta_h$ through $k$ gives the vector
\begin{equation}
T_k (\vec{b}) = (
 \sin \alpha \, \sin \psi/2, \,
 \cos \psi/2, \,
 \cos \alpha \, \sin \psi/2),
\end{equation}
rotated by $\psi$ in the plane orthogonal to $k$. The parallel transport of $\vec{a}$ around $h$, and $\vec{b}$ the remainder of the way around $h$, leaves the $z$-components unchanged, rotating the $\rho \phi$-plane by $\epsilon_h$ in the same direction as the parallel transport. This gives the transported vectors as
\begin{eqnarray}
T_h (\vec{a})
 &=& (
   \cos \alpha \, \cos \epsilon_h, \,
   \cos \alpha \, \sin \epsilon_h, \,
 - \sin \alpha
 ),	\nonumber \\
T_{h \circ k} (\vec{b})
 &=& (
   \sin \alpha \, \sin \psi/2 \, \cos \epsilon_h
 - \cos \psi/2 \, \sin \epsilon_h, \,
	\nonumber \\ && \quad
   \sin \alpha \, \sin \psi/2 \, \sin \epsilon_h
 + \cos \psi/2 \, \cos \epsilon_h, \,
   \cos \alpha \, \sin \psi/2
 ).
\end{eqnarray}
The inner products of the parallel transported vectors with the original vectors are
\begin{eqnarray}
<T_h (\vec{a}), \vec{a}>
 &=& \cos^2 \alpha \, \cos \epsilon_h + \sin^2 \alpha
	\nonumber \\
<T_h (\vec{a}), \vec{b}>
 &=&
 - \cos \alpha \, \sin \alpha \, \sin \psi/2 \, \cos \epsilon_h
 + \cos \alpha \, \cos \psi/2 \, \sin \epsilon_h
 + \cos \alpha \, \sin \alpha \, \sin \psi/2
	\nonumber \\
<T_{h \circ k} (\vec{b}), \vec{a}>
 &=&
   \cos \alpha \, \sin \alpha \, \sin \psi/2 \, \cos \epsilon_h
 - \cos \alpha \, \cos \psi/2 \, \sin \epsilon_h
 - \cos \alpha \, \sin \alpha \, \sin \psi/2
	\nonumber \\
<T_{h \circ k} (\vec{b}), \vec{b}>
 &=&
 - \sin^2 \alpha \, \sin^2 \psi/2 \, \cos \epsilon_h
 - \cos^2 \alpha \, \sin^2 \psi/2
	\nonumber \\ && \quad
 + 2 \sin \alpha \, \cos \psi/2 \, \sin \psi/2 \, \sin \epsilon_h
 + \cos^2 \psi/2 \, \cos \epsilon_h \,
.
\end{eqnarray}

Any vector within $D$ can now be decomposed into its $\vec{a}$ and $\vec{b}$ components. An orthonormal set of vectors $\vec{u}, \vec{v} \in P_h^\theta$ can be given for any choice of angle $\beta$ by
\begin{equation}
\vec{u} = \cos \beta \, \vec{a} + \sin \beta \, \vec{b}, \qquad
\vec{v} = - \sin \beta \, \vec{a} + \cos \beta \, \vec{b}.
\end{equation}
The parallel transport of $\vec{u}$ around the boundary of $D$ is given by the same coefficients as above, with the basis vectors $T_h (\vec{a})$ and $T_{h \circ k} (\vec{b})$. The inner product of the parallel transport of $\vec{u}$ with $\vec{v}$ is then
\begin{eqnarray}
<T(\vec{u}), \vec{v}> &=&
 - \cos \beta \, \sin \beta \,
 <T_h (\vec{a}), \vec{a}>
 + \cos^2 \beta \,
 <T_h (\vec{a}), \vec{b}>
	\nonumber \\ && \quad
 - \sin^2 \beta \,
 <T_{h \circ k} (\vec{b}), \vec{a}>
 + \cos \beta \, \sin \beta \,
 <T_{h \circ k} (\vec{b}), \vec{b}>
	\nonumber \\  &=&
 - \cos \alpha \, \sin \alpha \, \sin \psi/2 \, \cos \epsilon_h
 + \cos \alpha \, \cos \psi/2 \, \sin \epsilon_h
 + \cos \alpha \, \sin \alpha \, \sin \psi/2
	\nonumber \\ && \quad
 - \cos \beta \, \sin \beta \, (
   \cos^2 \alpha \, \cos \epsilon_h
 + \sin^2 \alpha \, \sin^2 \psi/2 \, \cos \epsilon_h
 - \cos^2 \psi/2 \, \cos \epsilon_h
	\nonumber \\ && \quad
 - 2 \sin \alpha \, \cos \psi/2 \, \sin \psi/2 \, \sin \epsilon_h
 + \sin^2 \alpha
 + \cos^2 \alpha \, \sin^2 \psi/2
   )
	\nonumber \\  &=&
   \cos \alpha \, \epsilon_h
 - \cos \beta \, \sin \beta \, O(\epsilon_h^2) \, .
\end{eqnarray}
Terms of order $\psi$ are also considered to be of order $\epsilon_h$, since the maximum absolute value of $\psi$ is $\epsilon_h$. If the linear terms alone are taken, then the inner product is independent of the choice of vector parallel transported. The integral of the sectional curvature can therefore be consistently defined to first order in $\epsilon_h$. From (\ref{psi}) it can also be seen that $\cos \alpha = \cos \theta$ to first order in $\epsilon_h$.

Naively, the parallel transport of a unit vector $\vec{u} \in P^\theta_h$ can be taken with respect to the space $\mathrm{star}(h)$ rather than $P^\theta_h$. This essentially removes the contribution from the corner along $k$ in the computation above. The inner product of the transported vector $\vec{u}$ with $\vec{v}$ is then
\begin{equation}
<T_{\mathrm{star}(h)}(\vec{u}), \vec{v}>
 = \cos \theta \, \epsilon_h
 - \cos \beta \, \sin \beta (1 - \cos^2 \theta) \frac{\epsilon_h^2}{2}
 + O(\epsilon_h^3) ,
\end{equation}
which is still consistent to first order in the deficit angle, and gives the same result for this level of approximation.

This can easily be extended to higher dimensions, where the $z$-coordinate is chosen to represent the linear subspace of $h$ which makes the smallest angle with $P^\theta_h$. The span of any orthogonal complement in the space $h$ can then be factored out, giving the same computations and results as above.

\end{proof}


As required, the expression for the integrated curvature vanishes for any surface which intersects $h$ at more than a single point, i.e. for $\theta = \pi/2$, and gives the deficit angle itself for $\theta = 0$.

\section{Scalar Curvature}
\label{sec:Sca}

\subsection{Two dimensional surfaces}

On a two-dimensional smooth surface $M^2$ the tangent space at each point is itself a $2$-plane, so the scalar curvature is exactly twice the sectional curvature by (\ref{Scal}), and gives the entire Riemann curvature at each point. On a piecewise flat surface $S^2$, for a region $D$ enclosing a single vertex $v$, the integrated sectional curvature is $\sin \epsilon_v$ from (\ref{IK2D}) giving the integrated scalar curvature over $D$ as $2 \sin \epsilon_v$. If the region $D \subset S^2$ encloses a number of vertices $v_i$, the integral of the scalar curvature over $D$ is the sum of the integrals for each vertex
\begin{equation}
\int_{D} \, ^{(2)}R \ \mathrm{d} A
 = 2 \sum_i \int_{D_i} K \ \mathrm{d} A
 = 2 \sum_i \sin \epsilon_{v_i}
 = 2 \sum_i \epsilon_{v_i} + O(\epsilon^3) .
\label{Ksumh}
\end{equation}
The higher order terms can then be truncated for triangulations of a high-enough resolution.

For a closed piecewise flat surface $S^2$, a combinatorial form of the Gauss-Bonnet theorem shows that the sum of all deficit angles is equivalent to $2 \pi$ times the Euler characteristic of the topology of the surface. When $S^2$ is a triangulation of a smooth surface $M^2$, the smooth Gauss-Bonnet theorem for $M^2$ implies
\begin{equation}
\int_{M^2} \, ^{(2)}R \ \mathrm{d} A \equiv 2 \sum_{h \in S^n} \epsilon_h .
\end{equation}
This shows the deficit angles to represent integrals of the scalar curvature. Decomposing the total area into local regions $D$ over which to give average curvatures is ambiguous, however it seems natural to tessellate $S^2$ into areas $A_v$, each enclosing a single vertex $v$. The piecewise flat scalar curvature at each vertex $v$ can then be defined as the average of the integrated curvature over $A_v$,
\begin{equation}
^{(2)}R_v := 2 \frac{\epsilon_v}{A_v} .
\label{R2D}
\end{equation}
With an appropriate distribution of the total area of $S^2$ over each $A_v$, related to the specifics of the triangulation in some way, $^{(2)}R_v$ can be compared with the scalar curvature at the point $v_M$ corresponding to $v$ on the smooth manifold $M^2$. Specific decompositions will not be discussed here, but appendix \ref{sec:Duals} gives details about the two most common decompositions, the Voronoi and barycentric.

\subsection{Single hinges in higher dimensions}

In higher dimensions, there is more than a single plane orientation in the neighbourhood of each point, with the the scalar curvature formed from the sectional curvatures of a set of orthogonal planes. Within the star of a hinge $h$, almost-planar $2$-surfaces orthogonal to $h$ have an integrated sectional curvature given by the deficit angle $\epsilon_h$, however the integrated sectional curvatures of surfaces orthogonal to this must vanish. The $n$-volume integral of the scalar curvature over the star of $h$ is then given in very simple terms.

\begin{lemma}[Integral of scalar curvature over star of hinge]
\label{lem:IntRh}
The integral of the scalar curvature over the $n$-volume of the star of a hinge $h$ is
\begin{equation}
\label{IRh}
\int_{\mathrm{star}(h)} R \ \mathrm{d} V^n
 = 2 |h| \sin \epsilon_h
 = 2 |h| \epsilon_h + O(\epsilon_h^3) .
\end{equation}
\end{lemma}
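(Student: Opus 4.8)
The plan is to reduce the $n$-volume integral to a product of an integral over a single orthogonal cross-section and an integral along the hinge, exploiting both the foliation of Lemma \ref{lem:IKPerp} and the product structure of $\mathrm{star}(h)$. First I would fix a point $p \in h$ and choose an orthonormal basis $\{\vec{e_1}, \dots, \vec{e_n}\}$ at $p$ so that $\vec{e_1}, \vec{e_2}$ span the orthogonal surface $P^\perp_h(p)$ while $\vec{e_3}, \dots, \vec{e_n}$ are tangent to $h$. Applying the scalar-curvature expansion (\ref{Scal}) in this basis, the claim is that the only surviving term in $R = 2\sum_{i<j} K(\vec{e_i},\vec{e_j})$ is the orthogonal one, so that $R = 2\, K(P^\perp_h)$ pointwise, understood in the distributional sense appropriate to the conical singularity at $h$.

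The vanishing of the remaining sectional curvatures follows from the product structure noted after Definition \ref{def:DefAng}: $\mathrm{star}(h)$ is isometric to the direct product of a two-dimensional cone with the flat $(n-2)$-dimensional linear space of $h$. For a metric product the sectional curvatures of any plane spanned by two hinge-tangent directions vanish because that space is flat, and the sectional curvatures of any mixed plane (one cone direction, one hinge direction) vanish because there is no curvature coupling across a product. This leaves only $K(\vec{e_1},\vec{e_2}) = K(P^\perp_h)$, the curvature of the cone cross-section, whose support is concentrated on the apex, i.e.\ on the intersection of the leaf $P^\perp_h$ with $h$.

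Next I would split the volume integral along the foliation. Because $\mathrm{star}(h)$ is a metric product, its volume element factorizes as $\mathrm{d} V^n = \mathrm{d} A \, \mathrm{d} V_h$, with $\mathrm{d} A$ the area element on the orthogonal leaf and $\mathrm{d} V_h$ the $(n-2)$-volume element along $h$. Writing
\begin{equation}
\int_{\mathrm{star}(h)} R \ \mathrm{d} V^n
 = \int_h \left( 2 \int_{P^\perp_h(p)} K(P^\perp_h) \ \mathrm{d} A \right) \mathrm{d} V_h ,
\end{equation}
the inner integral is exactly the quantity evaluated in Lemma \ref{lem:IKPerp}, giving $2 \sin \epsilon_h$. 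Since the deficit angle $\epsilon_h$ is a single constant along the hinge, this inner value is independent of $p$, so integrating the constant $2 \sin \epsilon_h$ over $h$ simply multiplies by $|h|$, yielding $\int_{\mathrm{star}(h)} R \ \mathrm{d} V^n = 2 |h| \sin \epsilon_h$. The stated expansion then follows from $\sin \epsilon_h = \epsilon_h + O(\epsilon_h^3)$.

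The main obstacle I expect is making the factorization rigorous despite the conical singularity: the scalar curvature is not an ordinary integrable function but a measure concentrated on $h$, so the interchange of the hinge integration with the cross-sectional sectional-curvature integral cannot be justified by a naive Fubini argument and must instead be read off from the product structure. Once the isometry with the product of a cone and $h$ is invoked, however, the constancy of $\epsilon_h$ along $h$ together with Lemma \ref{lem:IKPerp} renders the remaining steps routine.
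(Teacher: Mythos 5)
Your proposal is correct and follows essentially the same route as the paper: both decompose $R$ as twice a sum of sectional curvatures over an orthogonal set of $2$-planes, argue that only the hinge-orthogonal plane contributes (you via the cone-times-$h$ product structure, the paper via the holonomy observation that loops in the complementary directions stay parallel to $h$ and cannot enclose it), and then factor the volume integral over the foliation to get $2\,|h|\,{\cal K}^\perp_h = 2\,|h|\sin\epsilon_h$. Your explicit attention to the distributional nature of the curvature at the conical singularity is a sound refinement but does not change the substance of the argument.
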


\begin{proof}
The integral of the scalar curvature over the star of $h$ can be given in terms of the integrals of sectional curvatures for a complete orthogonal set of almost-planar $2$-surfaces $P_i$. Using (\ref{Scal}),
\begin{equation}
\int_{\mathrm{star}(h)} R \ \mathrm{d} V^n
 = \int_{\mathrm{star}(h)} \left[
   2 \sum_{i} K(P_i)
   \right] \mathrm{d} V^n
 = 2 \sum_{i}
   \int_{\mathrm{star}(h)} K(P_i) \ \mathrm{d} V^n ,
\end{equation}
with the integral and sum commuting as long as the surfaces $P_i$ are consistent throughout the star of $h$. Choosing one of the surfaces $P_i$ at each point so that it is orthogonal to $h$, the complimentary span of surfaces cannot be consistently separated into orthogonal surfaces, however the parallel transport over any path will be parallel to $h$ and therefore cannot enclose it. The only non-vanishing sectional curvature must then come from the surfaces $P_h^\perp$. The equation above then reduces to
\begin{equation}
\int_{\mathrm{star}(h)} R \ \mathrm{d} V^n
 = 2 \int_{h} {\cal K}_h^\perp \ \mathrm{d} V^{(n-2)}
 = 2 |h| {\cal K}_h^\perp
 = 2 |h| \sin \epsilon_h ,
\label{IntRh3}
\end{equation}
since the value of ${\cal K}_h^\perp$ is independent of the point at which the surface $P_h^\perp$ intersects $h$.

\end{proof}

\begin{corollary}[Integral of scalar curvature over subspace of hinge star]
\label{cor:IntRhD}
The integral of the scalar curvature over any $n$-dimensional region $D \subset \mathrm{star}(h)$ is
\begin{equation}
\label{IRhD}
\int_{D} R \ \mathrm{d} V^n
 = 2 |h_{|D}| \sin \epsilon_h
 = 2 |h_{|D}| \epsilon_h + O(\epsilon_h^3) ,
\end{equation}
with $h_{|D}$ representing the part of $h$ that is contained within $D$, i.e. $h_{|D} = h \cap D$.
\end{corollary}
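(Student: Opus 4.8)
The plan is to localize the argument used for Lemma \ref{lem:IntRh}, exploiting the fact that the foliation of $\mathrm{star}(h)$ into the orthogonal $2$-surfaces $P_h^\perp$ is independent of the point $p \in h$ at which each leaf meets the hinge. First I would repeat the reduction from the proof of Lemma \ref{lem:IntRh}: choosing one member of the orthogonal set of almost-planar $2$-surfaces at each point to be $P_h^\perp$, the complementary surfaces carry parallel transports that stay parallel to $h$ and hence cannot enclose it, so the only non-vanishing sectional curvature contribution comes from the leaves $P_h^\perp$. This identifies the scalar curvature as being supported entirely on $h$, with the integrated sectional curvature $\mathcal{K}_h^\perp = \sin \epsilon_h$ of each leaf concentrated at its singular intersection point with $h$.

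Second, I would set up a Fubini/coarea-type decomposition of the integral over $D$ adapted to this foliation, writing $\int_D R \, \mathrm{d}V^n = 2 \int_{h} \left( \int_{D \cap P_h^\perp(p)} K(P_h^\perp) \, \mathrm{d}A \right) \mathrm{d}V^{(n-2)}(p)$. Because each leaf $P_h^\perp(p)$ is isomorphic to $\mathbb{R}^2$ away from $p$, the inner area integral equals $\sin \epsilon_h$ whenever the cross-section $D \cap P_h^\perp(p)$ encloses the singular point $p$, and vanishes otherwise. By Lemma \ref{lem:IKPerp} this value is independent of $p$, so the outer integral collapses to $2 \sin \epsilon_h$ times the $(n-2)$-measure of the set of points of $h$ whose leaves are enclosed by $D$, namely $|h_{|D}|$ with $h_{|D} = h \cap D$. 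Combined with the expansion $\sin \epsilon_h = \epsilon_h + O(\epsilon_h^3)$ already recorded in (\ref{IKPerp}), this yields the stated result.

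The main obstacle I anticipate is justifying that the inner integral is exactly $\sin \epsilon_h$, rather than some partial value, for every $p \in h \cap D$, and exactly zero for $p \notin D$. This requires that for each relevant leaf the cross-section $D \cap P_h^\perp(p)$ genuinely encircles the conic singularity at $p$, so that the parallel-transport argument of Lemma \ref{lem:IKPerp} applies verbatim, while for leaves whose singular point lies outside $D$ the cross-section sits in the flat part of the leaf and contributes nothing. The cleanest way to handle this uniformly is to treat the curvature as a Dirac-type distribution concentrated on $h$ with linear density $2 \sin \epsilon_h$, so that integrating over any region $D$ simply measures the amount $|h \cap D|$ of hinge it contains, independently of the detailed shape of $\partial D$ away from the hinge. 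This is essentially the statement that the singular curvature of a conic hinge is supported on $h$ alone and is insensitive to how the surrounding flat region is truncated.
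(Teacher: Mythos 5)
Your argument is correct and is essentially the paper's own proof: the paper simply says the result follows from Lemma \ref{lem:IntRh} with the integral over $h$ in (\ref{IntRh3}) restricted to $h_{|D}$, which is exactly the Fubini-over-the-leaves reduction you spell out. Your extra care about leaves whose cross-sections do not enclose the singular point is a reasonable elaboration of the same step, not a different route.
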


\begin{proof}
The proof follows the same argument as lemma \ref{lem:IntRh} above, with the integral in the second term of (\ref{IntRh3}) carried out over $h_{|D}$ instead of $h$.

\end{proof}

If a compact manifold is decomposed into $n$-volumes $D_h$, with each containing a complete hinge $h$ and no part of any other, then the integral of the scalar curvature is just the sum of the integrated curvatures over each $n$-volume $D_h$. Using corollary \ref{cor:IntRhD} above,
\begin{equation}
\int_{S^n} R \ \mathrm{d} V^n
 = 2 \sum_h |h| \epsilon_h ,
\label{Regge}
\end{equation}
which agrees exactly with the Regge action \cite{Regge}. This has led to the hypothesis that it is the average scalar curvature over each of these $n$-volumes $D_h$ that should be compared with the continuum \cite{HambWill,PiranWill,HilbertAction}. A number of constructions have been suggested for the regions $D_h$, the most common of which are the $n$-polytopes formed by the vertices in the closure of $h$, and either the circumcenters or barycenters of simplices in the star of $h$.

However, in the proof of lemma \ref{lem:IntRh} the only contribution to the integral comes from the deficit angle at $h$. As discussed at the end of section \ref{sec:DefAng}, this deficit angle is mostly related to the projection of the Riemann curvature orthogonal to $h$, and should therefore not contain enough information about the rest of the Riemann tensor to give the full scalar curvature. Computations in section \ref{sec:Comp} also show the deficit angles to be influenced by the orientation of hinges. This all implies that a single hinge is not enough to compare with the scalar curvature of a smooth manifold, and that a larger sampling of hinges is instead required.

\subsection{Vertex-based scalar curvature}

The hinges in the star of a vertex give a discretization of a complete span of hinge orientations. Vertex based tessellations of piecewise flat manifolds therefore provide a natural setting for the computation of the scalar curvature. As with the the $2$-dimensional case, there are a number of different methods for decomposing a piecewise flat manifold $S^n$ into $n$-volumes associated with each vertex. A specific tessellation will not be selected here, but a general definition is given below.

\begin{definition}[Vertex volume]
\label{def:V_v}
In a piecewise flat manifold $S^n$, an $n$-dimensional open region $V_v \subset S^n$ dual to a vertex $v \in S^n$ is defined to have the following properties:
\begin{enumerate}
\item $v \in V_v$ and no other vertex is contained within $V_v$.

\item The regions $V_v$ form a complete tessellation of $S^n$,
\begin{equation}
|S^n| = \sum_{v \in S^n} |V_v|, \qquad
 V_{v_i} \cap V_{v_j} = \emptyset \quad
 \forall \ i \neq j .
\end{equation}
with $|S^n|$ and $|V_v|$ representing the $n$-volumes of $S^n$ and $V_v$ respectively.

\item Only hinges $h_v$ in the star of $v$ intersect $V_v$.
\end{enumerate}
\end{definition}

The definition above gives the properties that are required for the consistency of the constructions that follow. While point $3$ is not strictly necessary, with its removal needing only minor corrections to the rest of the paper, it seems like a reasonable requirement for the intuition being used.
The total volume of $S^n$ should also be distributed in some consistent manner over all of the regions $V_v$. This is a more difficult property to define however, and there are many interpretations of what `consistency' should mean. The Voronoi and barycentric tessellations, discussed in appendix \ref{sec:Duals}, are each based on different non-complimentary concepts of this consistency for example.

The average piecewise flat scalar curvature over each such volume can now be given.

\begin{theorem}[Piecewise flat scalar curvature]
\label{thm:R}
The average scalar curvature over a dual $n$-volume $V_v$ is
\begin{equation}
\label{Rv}
R_v
 := \widetilde R_{V_v}
 = \frac{2}{|V_v|}
   \sum_{h \subset \mathrm{star}(v)} |h_{|V_v}| \epsilon_h ,
\end{equation}
with $h_{|V_v} = h \cap V_v$, and $|V_v|$ representing the $n$-volume of $V_v$.
\end{theorem}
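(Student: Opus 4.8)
The plan is to start from the definition of the average curvature, $\widetilde R_{V_v} = \frac{1}{|V_v|}\int_{V_v} R \, \mathrm{d}V^n$, and to evaluate the volume integral by exploiting the fact that in a piecewise flat manifold the scalar curvature is concentrated entirely on the hinges. Away from every hinge the manifold is locally Euclidean, since the metrics of adjacent $n$-simplices develop smoothly across shared faces, so $R$ vanishes there and the only contributions to $\int_{V_v} R \, \mathrm{d}V^n$ come from hinges that actually meet $V_v$. By property~3 of Definition \ref{def:V_v}, the hinges intersecting $V_v$ are exactly those in $\mathrm{star}(v)$, which is precisely the index set appearing in (\ref{Rv}).

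First I would write the curvature as a sum of hinge-localised contributions, $R\,\mathrm{d}V^n = \sum_h R_h$, where $R_h$ is supported in $\mathrm{star}(h)$ and, crucially, the open hinge-simplices are pairwise disjoint. This additivity lets me split $\int_{V_v} R \,\mathrm{d}V^n = \sum_h \int_{V_v} R_h$ without any double counting, even though the stars of neighbouring hinges overlap. For each hinge $h \subset \mathrm{star}(v)$ I would then restrict to the region $D = V_v \cap \mathrm{star}(h)$ and apply Corollary \ref{cor:IntRhD}: since $D \subset \mathrm{star}(h)$ and $h_{|D} = h \cap V_v = h_{|V_v}$, the integral of $R$ over $D$ equals $2 |h_{|V_v}| \sin \epsilon_h = 2|h_{|V_v}|\epsilon_h + O(\epsilon_h^3)$. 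Summing over all $h \subset \mathrm{star}(v)$ and dividing by $|V_v|$ yields (\ref{Rv}) once the cubic error is truncated, as is justified for sufficiently fine triangulations.

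The main obstacle is justifying the clean additive decomposition used above. Because the stars of adjacent hinges share $n$-simplices, one cannot simply partition $V_v$ into the overlapping pieces $V_v\cap\mathrm{star}(h)$; the subtlety is to show that the full curvature attributable to $h$ is recovered by integrating over $\mathrm{star}(h)$ and that no portion of a neighbouring hinge $h'$ is inadvertently counted there. This is exactly the locality established inside the proof of Lemma \ref{lem:IntRh}: within $\mathrm{star}(h)$ the only nonvanishing integrated sectional curvature comes from the surfaces $P_h^\perp$ enclosing $h$, while any loop that would enclose another hinge $h'$ fails to close up inside $\mathrm{star}(h)$, since only part of the simplicial wheel around $h'$ lies there. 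Granting this locality, the pairwise disjointness of the open hinges guarantees that $\int_{V_v}R\,\mathrm{d}V^n$ is precisely the sum of the per-hinge contributions $2|h_{|V_v}|\epsilon_h$, and the theorem follows.
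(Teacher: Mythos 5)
Your proof is correct and follows essentially the same route as the paper: localize the curvature to the hinges meeting $V_v$, apply Corollary \ref{cor:IntRhD} to get $2|h_{|V_v}|\epsilon_h$ per hinge, sum, and divide by $|V_v|$. The only cosmetic difference is that the paper partitions the \emph{domain} $V_v$ into disjoint regions $D_h$ each meeting a single hinge, whereas you decompose the \emph{integrand} into hinge-supported contributions; since the open hinges are pairwise disjoint these are equivalent, and your discussion of why the overlapping stars cause no double counting is exactly the point the paper's disjoint decomposition is designed to handle.
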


\begin{proof}
The region $V_v$ can be decomposed into subregions $D_h$, each enclosing the restriction of a hinge $h$ to $V_v$, and intersecting no other hinge,
\begin{equation}
h_{|V_v} \subset D_h, \qquad
 h_i \cap D_{h_j} = \emptyset \quad
 \forall \ i \neq j, \qquad
 |V_v| = \sum_{h \subset \mathrm{star}(v)} |D_h|.
\end{equation}
The integral of the sectional curvature over all of $V_v$ is then given by the sum of the integrals over each $D_h$, which are given by corollary \ref{cor:IntRhD},
\begin{equation}
\int_{V_v} R \ \mathrm{d} V^n
 = \sum_{h \subset \mathrm{star}(v)}
   \int_{D_h} R \ \mathrm{d} V^n
 = 2 \sum_{h \subset \mathrm{star}(v)} |h_{|V_v}| \epsilon_h ,
\end{equation}
assuming small deficit angles. The average curvature is then found by dividing the integral by the volume of $V_v$.

\end{proof}

The total integral of this scalar curvature $R_v$ over a piecewise flat manifold $S^n$ is still equivalent to the Regge action, as shown below. In fact, equation (\ref{Rv}) was derived directly from the Regge action for a Voronoi tessellation in \cite{MMR}.

\begin{corollary}[Total scalar curvature]
\label{cor:IntR}
The total integral of the scalar curvature $R_v$ over a piecewise flat manifold $S^n$ is
\begin{equation}
\int_{S^n} R_v \ \mathrm{d} V^n = 2 \sum_{h \subset S^n} |h| \epsilon_h ,
\label{ReggeEq}
\end{equation}
which is equivalent to the Regge action for $S^n$.
\end{corollary}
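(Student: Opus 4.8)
The plan is to integrate the vertex-averaged scalar curvature $R_v$ over the whole manifold $S^n$ by exploiting the tessellation $\{V_v\}$ guaranteed by Definition \ref{def:V_v}, and then to reorganise the resulting double sum over hinges. Since $R_v$ is a \emph{constant} on each $V_v$ (it is an average, not a pointwise field), the integral of $R_v$ over $S^n$ should first be written as a sum over vertices of $R_v \, |V_v|$, using property 2 of Definition \ref{def:V_v}, which tells us the $V_v$ partition $S^n$ up to measure zero. Explicitly,
\begin{equation}
\int_{S^n} R_v \ \mathrm{d} V^n
 = \sum_{v \in S^n} R_v \, |V_v| .
\end{equation}

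Next I would substitute the expression for $R_v$ from Theorem \ref{thm:R}, namely $R_v \, |V_v| = 2 \sum_{h \subset \mathrm{star}(v)} |h_{|V_v}| \epsilon_h$. The volume factors $|V_v|$ cancel immediately, leaving a double sum over vertices and over hinges in each vertex star,
\begin{equation}
\int_{S^n} R_v \ \mathrm{d} V^n
 = 2 \sum_{v \in S^n} \sum_{h \subset \mathrm{star}(v)}
   |h_{|V_v}| \epsilon_h .
\end{equation}
The key step is then to interchange the order of summation, grouping the terms by hinge rather than by vertex. For a fixed hinge $h$, the vertices $v$ whose star contains $h$ are exactly the endpoints/corners of $h$, and the deficit angle $\epsilon_h$ is a property of $h$ alone, so it factors out of the inner sum.

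The substantive point — and the only place where genuine content enters — is that for each fixed $h$ the pieces $h_{|V_v} = h \cap V_v$, as $v$ ranges over the vertices in the closure of $h$, reassemble into all of $h$. This is where I expect the main (mild) obstacle to lie: one must argue that the restrictions $\{h \cap V_v\}$ tessellate $h$ itself, so that $\sum_v |h_{|V_v}| = |h|$. This follows from combining property 2 of Definition \ref{def:V_v} (the $V_v$ tessellate $S^n$) with property 3 (only hinges in $\mathrm{star}(v)$ meet $V_v$): intersecting the partition $S^n = \bigsqcup_v V_v$ with the lower-dimensional set $h$ yields a partition of $h$, and the only cells contributing are those $V_v$ with $h \subset \mathrm{star}(v)$. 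Granting this, the double sum collapses to
\begin{equation}
\int_{S^n} R_v \ \mathrm{d} V^n
 = 2 \sum_{h \subset S^n} \left(
     \sum_{v \,:\, h \subset \mathrm{star}(v)} |h_{|V_v}|
   \right) \epsilon_h
 = 2 \sum_{h \subset S^n} |h| \, \epsilon_h ,
\end{equation}
which is the claimed identity and, by equation (\ref{Regge}), the Regge action for $S^n$. The argument is essentially bookkeeping once the tessellation-of-$h$ observation is in place; no estimates or approximations are needed here, since the small-deficit-angle truncation was already absorbed into the statement of Theorem \ref{thm:R}.
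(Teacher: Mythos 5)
Your proposal is correct and follows essentially the same route as the paper's proof: expand $\int_{S^n} R_v\,\mathrm{d}V^n$ as a sum over vertex volumes using Theorem \ref{thm:R}, then regroup by hinge and observe that the pieces $h_{|V_v}$ reassemble into all of $h$ because the $V_v$ tessellate $S^n$ (up to measure zero) while $\epsilon_h$ is constant on $h$. The paper states the reassembly step more tersely, but the content is identical.
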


\begin{proof}
The integral of $R_v$ over $S^n$ is equal to the sum of the integrals for each vertex volume $V_v$,
\begin{equation}
\int_{S^n} R_v \ \mathrm{d} V^n
 = \sum_{v \in S^n} \left(
 2 \sum_{h \subset \mathrm{star}(v)} |h_{|V_v}| \epsilon_h
 \right) .
\end{equation}
Each part of each hinge $h$ must be contained in a single volume $V_v$, since the volumes $V_v$ form a tessellation of $S^n$ (except for any boundaries which will be of measure zero). Since the deficit angles $\epsilon_h$ are fixed over each hinge $h$, the result follows.
\end{proof}

Constructing the scalar curvature at vertices is also consistent with earlier combinatorial analogues of the scalar curvature. The three-dimensional combinatorial scalar curvature of Cooper \& Rivin \cite{CoopRiv} is defined as the solid angle deficit at a vertex $S_v$, which is shown to be given in terms of deficit angles by
\begin{equation}
S_v = \sum_{h \subset \mathrm{star}(v)} \epsilon_h .
\end{equation}
The triangulations in \cite{CoopRiv} are specifically related to sphere-packings, with radial lengths $r_v$ assigned to each vertex, and the length of each edge defined as the sum of the vertex radii at either end. In this case, the expression $S_v \, r_v$ is equivalent to the integral of $R_v$ over $V_v$ above. This was generalized to a larger class of triangulations by Glickenstein \cite{Glick}, where the edge-lengths are still separated into parts associated with each vertex. The scalar curvature there is defined as $\sum_{h \subset \mathrm{star}(v)} |h_{v}| \epsilon_h$ which is also consistent with the integral of $R_v$ over $V_v$. In fact, these integrated scalar curvatures have been shown to converge to the smooth scalar curvature measure $R \, \mathrm{d} V^n$ by Cheeger, M\"{u}ller \& Schrader \cite{CMS}, for an appropriate convergence of triangulations to the smooth manifold.

\section{Sectional Curvature}
\label{sec:Sec}

The two dimensional case has already been discussed in relation to the scalar curvature. A decomposition of a piecewise flat manifold $S^n$ into areas $A_v$, each enclosing a single vertex $v$, give appropriate regions for the averaging the sectional curvature. The integrated curvature over each $A_v$ is then given by $\epsilon_v$ for small deficit angles, and the average sectional curvature over $A_v$ by
\begin{equation}
^{(2)}K_v := \frac{\epsilon_v}{A_v} .
\label{K2D}
\end{equation}
As with the scalar curvature, for an appropriate distribution of the total area of $S^2$ over each $A_v$, $^{(2)}K_v$ can be compared with the smooth sectional curvature at the point $v_M \in M^n$ corresponding to $v$.

\subsection{Multiple hinges}

In higher dimensions there is a natural choice of orientation with which to compute sectional curvatures at each hinge $h$, given by the surfaces $P_h^\perp$ orthogonal to $h$. For some $n$-volume $D_h$ enclosing all of the hinge $h$ and no other, the average sectional curvature orthogonal to $h$ will be
\begin{equation}
\widetilde K_{D_h} = \frac{|h| \epsilon_h}{|D_h|} ,
\label{singlehK}
\end{equation}
using the same procedure as the proof of lemma \ref{lem:IntRh}. This is equivalent to dividing $\epsilon_h$ by the average cross-sectional area of $D_h$, orthogonal to $h$. This has been suggested as a piecewise flat analogue of the sectional curvature orthogonal to an edge a number of times \cite{HambWill,SRT}.


However, as discussed at the end of section \ref{sec:DefAng}, there is no direct correlation between the deficit angle at a hinge $h$ and the projection of the Riemann curvature orthogonal to $h$. The tessellation of the manifold is also inappropriate, assigning separate volumes to each orientation, where sectional curvatures for each of $\frac{n}{2} (n-1)$ linearly independent planes all occupy the same space in a smooth manifold. Additionally, the results in section \ref{sec:Comp} show a lack of convergence of this expression to the smooth sectional curvature values. This is particularly apparent in the diagonal edges in the $3$-cylinder model, which have zero deficit angle even though the smooth curvature cannot be zero, and in figures \ref{fig:G} and \ref{fig:N3} for the Gowdy and and Nil-$3$ manifolds.

From all of this, it is clear that the sectional curvature requires construction over $n$-volumes containing more than one hinge, and that individual hinges should be considered \emph{super}-local when comparing curvatures with their smooth counterparts. It seems natural to use the vertex volumes $V_v$, similar to the scalar curvature, however the sectional curvature requires an orientation and there is no consistent way of defining almost-flat $2$-surfaces through a vertex, or a foliation of surfaces through $V_v$. Surfaces $P_h^\perp$ orthogonal to a given hinge \emph{can} be defined unambiguously, and in three dimensions these can even be related to an average orientation orthogonal to the geodesic line-segments $\ell_M$ in a smooth manifold.

The surfaces $P_h^\perp$ must first be extended to enclose neighbouring hinges, with this extension defined below.

\begin{definition}[Extension of $2$-surfaces]
\label{def:Pih}
At each point $p \in h$ the surface $P_h (p)$ can be constructed by developing the surface $P^\perp_h$ around both sides of neighbouring hinges $h_i$ which make a well defined angle $\theta_i$ with $h$, forming a union of $P_h^\perp$ with the surfaces $P_{h_i}^{\theta_i}$.
\end{definition}

Intrinsically, this surface is locally isomorphic to $\mathbb{R}^2$ everywhere but at the intersection points with the hinges $h$ and $h_i$. As an embedding within $S^n$, the surface $P_h$ forms corners along a finite number of lines $k_i$ emanating from each hinge $h_i$ in an outward radial direction from $h$. These corners form angles which deviate from $\pi$ by less than $\epsilon_{h_i}$ from lemma \ref{lem:Pitheta}.

\begin{figure}[h]
\begin{center}
\includegraphics[scale=0.3]{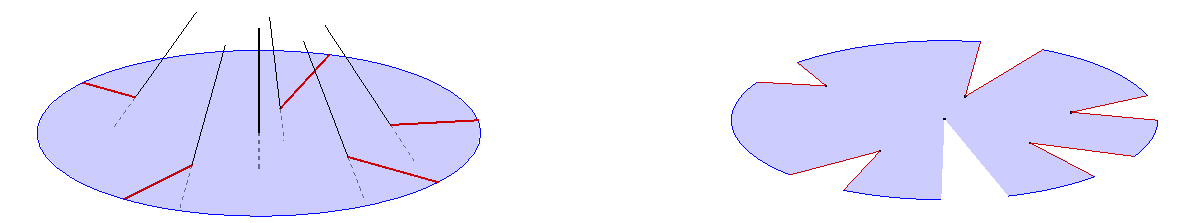}
\end{center}
\vspace{-0.5cm}
\caption{The development of the surface $P^\perp_h$ around neighbouring hinges $h_i$ to produce $P_h$, and the embedding of $P_h$ in $\mathbb{R}^2$ showing the effective deficit angles.}
\label{fig:Pih}
\end{figure}

\begin{lemma}[Extended integrated sectional curvature]
\label{lem:IKpi}
The integrated curvature for some $D \subset P_h$, enclosing hinges $h, h_1, ... , h_k$ is given by the expression
\begin{equation}
{\cal K}_D
 := \int_D K (P_h) \, \mathrm{d} A
 = \epsilon_h + \sum_{i = 1}^k \cos \theta_i \, \epsilon_i + O(\epsilon^2),
 \label{IKPlane}
\end{equation}
with $\theta_i$ giving the angle between each hinge $h_i$ and $h$, and $\epsilon_i$ the deficit angle of $h_i$.
\end{lemma}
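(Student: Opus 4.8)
The plan is to reduce the integral over $D$ to a sum of the single-hinge integrals already established in Lemma \ref{lem:IKPerp} and Theorem \ref{thm:IKtheta}, exploiting the fact that, by construction (Definition \ref{def:Pih}), the surface $P_h$ coincides with $P_h^\perp$ in a neighbourhood of $h$ and with the angled surface $P_{h_i}^{\theta_i}$ in a neighbourhood of each $h_i$, while being intrinsically isomorphic to $\mathbb{R}^2$ away from the isolated intersection points with the hinges and the corner lines $k_i$ guaranteed by Lemma \ref{lem:Pitheta}. First I would partition $D$ into subregions $D_h, D_1, \dots, D_k$, disjoint up to sets of measure zero, chosen so that $D_h$ contains only the intersection point $h \cap P_h$ while each $D_i$ contains only $h_i \cap P_h$, with every separating curve drawn through the flat part of $P_h$. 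By additivity of the area integral this gives $\int_D K(P_h)\,\mathrm{d} A = \int_{D_h} K \,\mathrm{d} A + \sum_{i=1}^k \int_{D_i} K\,\mathrm{d} A$.

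Next I would evaluate each piece with the matching single-hinge result. On $D_h$ the surface is $P_h^\perp$, so Lemma \ref{lem:IKPerp} gives $\int_{D_h} K\,\mathrm{d} A = \sin\epsilon_h = \epsilon_h + O(\epsilon_h^3)$. On each $D_i$ the surface is $P_{h_i}^{\theta_i}$ and $D_i$ encloses $h_i$ while crossing the corner line $k_i$ exactly once, so Theorem \ref{thm:IKtheta} applies verbatim and yields $\int_{D_i} K\,\mathrm{d} A = \cos\theta_i\,\epsilon_i + O(\epsilon_i^2)$. Summing over the finitely many hinges produces $\epsilon_h + \sum_{i=1}^k \cos\theta_i\,\epsilon_i$, and since the angled-hinge errors $O(\epsilon_i^2)$ dominate the orthogonal error $O(\epsilon_h^3)$, the accumulated error is $O(\epsilon^2)$ with $\epsilon$ the maximal deficit angle over the enclosed hinges.

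The underlying reason the contributions simply add, with no cross terms, is that by Lemma \ref{lem:IntSecM} the quantity $\mathcal{K}_D$ is the net rotation of a vector parallel transported around $\partial D$ within the $2$-surface $P_h$; the holonomy of a $2$-surface lies in the abelian group $SO(2)$, so the total rotation angle around $\partial D$ is exactly the sum of the rotation angles accumulated around each enclosed singularity, independent of the order of traversal. The corner lines $k_i$ carry no holonomy of their own, since their rotation by $\psi$ is already folded into the $\cos\theta_i\,\epsilon_i$ of Theorem \ref{thm:IKtheta}; a loop crossing a segment of $k_i$ that does not enclose $h_i$ picks up $+\psi$ and $-\psi$ and nets zero.

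I expect the main obstacle to be justifying rigorously that the partition is admissible, so that no curvature is lost or double counted along the internal separating curves or the corner lines. Concretely, one must verify that each separating curve can indeed be routed through the flat part of $P_h$, that the only singular contribution inside $D_i$ is $h_i$ together with its own corner $k_i$, and that each $D_i$ is wide enough in the angular direction to enclose $h_i$ and cross $k_i$ once, so that the hypotheses of Theorem \ref{thm:IKtheta} hold as stated. Once the local geometry of $P_h$ near each hinge is pinned down from Definition \ref{def:Pih} and Lemma \ref{lem:Pitheta}, the remainder is the routine additivity argument above.
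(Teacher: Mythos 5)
Your proposal is correct and follows essentially the same route as the paper: decompose $D$ into subregions each containing a single hinge (plus possibly hinge-free flat pieces), apply Lemma \ref{lem:IKPerp} to the orthogonal piece around $h$ and Theorem \ref{thm:IKtheta} to each angled piece around $h_i$, and sum. Your additional remarks on the abelian $SO(2)$ holonomy justifying additivity and on the cancellation across the corner lines $k_i$ are sound elaborations of points the paper leaves implicit.
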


\begin{proof}
The region $D$ can be decomposed into regions $D_h \subset P_h^\perp$, $D_i \subset P_{h_i}^{\theta_i}$ and possibly other regions enclosing no hinges at all. The integral of $K (P_h)$ over all of $D$ is equivalent to the sum of the integrals over each of the regions in the decomposition. Assuming small deficit angles, the integrated curvature for these regions is given by $\epsilon_h$, $\cos \theta_i \, \epsilon_i$ and $0$ respectively.

\end{proof}

\subsection{Sectional curvature orthogonal to an edge in $S^3$}

The volumes for constructing the sectional curvature should be formed by the dual tessellation of vertex volumes, however the edges give the best way of defining orientation. To satisfy both criteria, the volume $V_v$ of each vertex in the closure of an edge $\ell$ can be restricted to those points which are orthogonally separated from $\ell$. These restricted volumes for both vertices in $\bar \ell$ can then be combined to give a volume associated with $\ell$, which can be foliated into surfaces $P_\ell$ orthogonal to $\ell$ and is based on the dual tessellation.

\begin{definition}[Edge volume]
\label{def:Vh}

\

\begin{enumerate}
\item For each vertex $v$ in the closure of an edge $\ell$, the restriction of $V_v$ to $\ell$ is defined as the sub-region $V_{v|\ell}$ containing $\ell_{|V_v}$ and bounded by the $2$-surface $P_\ell (v)$ orthogonal to $\ell$ at $v$. This restriction can also be defined by the intersection of $V_v$ with the integral of the $2$-surfaces $P_\ell$ over $\ell$,
\begin{equation}
V_{v|\ell}
 := V_v \, \cap \int_\ell P_\ell \, \mathrm{d} p .
\end{equation}

\item The volume $V_\ell$ associate with each edge $\ell$ is defined as the sum of the restricted vertex volumes $V_{v|\ell}$ for each $v$ in the closure of $\ell$, or equivalently by the intersection
\begin{equation}
V_\ell
 := \left(\cup_v \, V_v\right) \cap \int_\ell P_\ell \, \mathrm{d} p ,
 \quad s.t. \quad
 v \in \bar \ell .
\end{equation}
\end{enumerate}
\end{definition}

In two dimension, where the hinges are given by vertices, the definition of the volume above is consistent with the areas associated with each vertex $A_v$. The volume $V_\ell$ will be used below to give the piecewise flat sectional curvature orthogonal to each edge $\ell \subset S^3$.

\begin{theorem}[Piecewise flat sectional curvature]
\label{thm:Kl}
In a three-dimensional piecewise flat manifold $S^3$, the average sectional curvature orthogonal to an edge $\ell$ over the volume $V_\ell$ is
\begin{equation}
K_\ell := \widetilde K_{V_\ell}
 = \frac{1}{|V_\ell|} \left(
     |\ell| \epsilon_\ell
   + \sum_{h} |h_{|V_\ell}| \, \cos^2 \theta_h \, \epsilon_h
   \right) ,
\end{equation}
with the sum taken over the other edges (hinges) $h$ intersecting $V_\ell$, with $h_{|V_\ell} = h \cap V_\ell$, $\theta_h$ representing the angle between $\ell$ and $h$, and $\epsilon_h$ the deficit angle at $h$.
\end{theorem}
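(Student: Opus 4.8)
The plan is to exploit the structure built into Definition \ref{def:Vh}, namely that $V_\ell$ is foliated by the $2$-surfaces $P_\ell$ orthogonal to $\ell$, and to reduce the volume integral of the sectional curvature to an integral along $\ell$ of the leaf-wise integrated sectional curvatures supplied by Lemma \ref{lem:IKpi}. Writing $p$ for arc length along $\ell$, each leaf $P_\ell(p)$ has normal $\hat\ell$, so the foliation is orthogonal and the volume element splits as $\mathrm{d} V = \mathrm{d} A \, \mathrm{d} p$, giving
\begin{equation}
\int_{V_\ell} K(P_\ell) \, \mathrm{d} V
 = \int_\ell \left( \int_{P_\ell(p) \cap V_\ell} K(P_\ell) \, \mathrm{d} A \right) \mathrm{d} p .
\end{equation}
By Lemma \ref{lem:IKpi}, taking $\ell$ as the central hinge and the other edges $h$ crossed by the leaf as the neighbouring hinges, the inner integral equals $\epsilon_\ell + \sum_{h} \cos\theta_h \, \epsilon_h + O(\epsilon^2)$, where the sum runs over those $h$ that $P_\ell(p)$ actually meets. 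The task is then to interchange the $p$-integral with this finite sum and evaluate each resulting contribution.

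First I would treat the central edge. Since $V_\ell$ is bounded at each endpoint by the orthogonal surface $P_\ell(v)$, the parameter $p$ sweeps all of $\ell$, so the $\epsilon_\ell$ term integrates directly to $|\ell| \, \epsilon_\ell$. For each neighbouring hinge $h$ I would rewrite the contribution as $\cos\theta_h \, \epsilon_h$ times the length of the interval of $p$-values for which $P_\ell(p)$ crosses $h$, and then convert that interval length into a measure along $h$. The key geometric observation is that a plane orthogonal to $\ell$ makes angle $\theta_h$ with the plane orthogonal to $h$, so near $h$ the leaf coincides with the surface $P_h^{\theta_h}$ of Lemma \ref{lem:Pitheta}, and a point moving along $h$ advances in the $\hat\ell$-coordinate at rate $\cos\theta_h$ per unit arc length. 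Hence $\mathrm{d} p = \cos\theta_h \, \mathrm{d} s_h$, and as the leaf sweeps $h_{|V_\ell}$ the corresponding $p$-interval has length $\cos\theta_h \, |h_{|V_\ell}|$. Multiplying by the leaf-wise factor $\cos\theta_h \, \epsilon_h$ produces the $\cos^2\theta_h$ dependence, so that $h$ contributes $|h_{|V_\ell}| \cos^2\theta_h \, \epsilon_h$. Summing over all crossed hinges, adding the central term, and dividing by $|V_\ell|$ then yields the stated formula.

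The main obstacle I anticipate is precisely this measure conversion: the single factor of $\cos\theta_h$ from Lemma \ref{lem:IKpi} (ultimately from Theorem \ref{thm:IKtheta}) accounts only for the tilted crossing of $h$ by each leaf, while the \emph{second} factor arises solely from the mismatch between the orthogonal-to-$\ell$ foliation measure and arc length along the tilted hinge. I would take care to justify that the foliation is genuinely orthogonal to $\ell$, so that $\mathrm{d} V = \mathrm{d} A \, \mathrm{d} p$ holds with any corrections being of higher order in the deficit angles, and that each portion $h_{|V_\ell}$ is swept exactly once as $p$ ranges over $\ell$, so that no part of a neighbouring hinge is double counted or omitted. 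With these points settled, the remaining manipulations are the routine interchange of a finite sum with an integral and the bookkeeping of the $O(\epsilon^2)$ remainder.
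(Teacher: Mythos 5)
Your proposal is correct and follows essentially the same route as the paper: both reduce the volume integral to an integral along $\ell$ of the leaf-wise integrated curvatures from Lemma \ref{lem:IKpi}, and both obtain the second factor of $\cos\theta_h$ by projecting $h_{|V_\ell}$ onto the $\ell$-direction (the paper phrases this as a right-angled triangle with $h_{|V_\ell}$ as hypotenuse, which is the same as your $\mathrm{d}p = \cos\theta_h\,\mathrm{d}s_h$ conversion). Your explicit identification of the distinct origins of the two cosine factors matches the paper's argument exactly.
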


\begin{figure}[h]
\begin{center}
\includegraphics[scale=0.25]{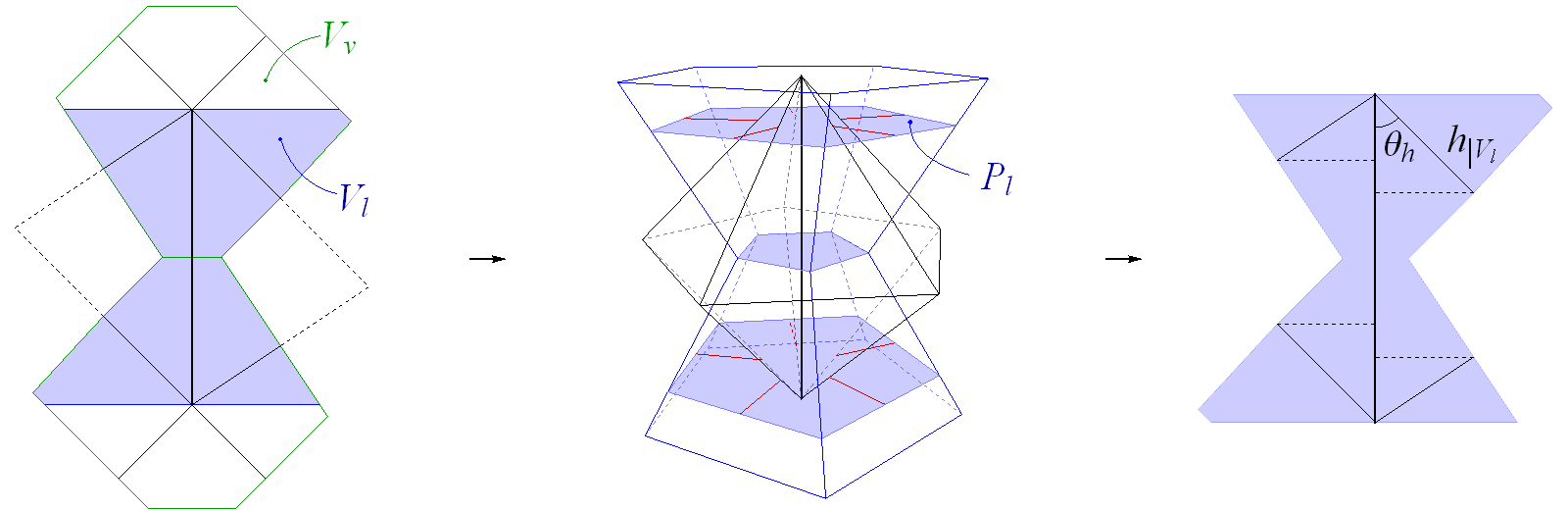}
\end{center}
\vspace{-0.5cm}
\caption{A cross-section of the region around an edge $\ell$ showing the volume $V_\ell$ on the left, followed by the outline of $V_\ell$ and a number of surfaces $P_\ell$ in the center, and a cross-section of $V_\ell$ again with the right angled triangles for each $h_{|\ell}$ shown on the right.}
\label{fig:Kl}
\end{figure}

\begin{proof}
At each point $p \in \ell$ the integral of the sectional curvature over the $2$-surface region $D(p) = P_\ell(p) \cap V_\ell$ is given, from lemma \ref{lem:IKpi}, by the expression
\begin{equation}
{\cal K}_D (p)
 = \epsilon_\ell + \sum_h \cos \theta_h \, \epsilon_h ,
\end{equation}
for each hinge $h$ intersecting $D(p)$. The integral of the sectional curvature orthogonal to $\ell$ over the entire volume $V_\ell$ is then found by the integrating ${\cal K}_D (p)$ over the length of $\ell$,
\begin{equation}
\int_{V_\ell} K(P_\ell) \, \mathrm{d} V
 = \int_{\ell} {\cal K}_D (p) \, \mathrm{d} p .
\end{equation}
Since each ${\cal K}_D (p)$ is a sum of contributions from different hinges $h$, the full volume integral can be given by a sum of integrals for each of these hinges
\begin{equation}
\int_{V_\ell} K(P_\ell) \, \mathrm{d} V
 = \epsilon_\ell \, |\ell|
 + \sum_h \int_{\ell} \cos \theta_h \, \epsilon_h \, \mathrm{d} p
 = \epsilon_\ell \, |\ell|
 + \sum_h |\ell_{|h}| \, \cos \theta_h \, \epsilon_h ,
\end{equation}
with the last term due to $\theta_h$ and $\epsilon_h$ not depending on the points $p \in \ell$. Here $|\ell_{|h}|$ represents the length of $\ell$ for which the hinge $h_h$ intersects the $2$-surface regions $D(p)$. This gives a right angled triangle with $h_{|V_\ell}$ as the hypothenuse, as shown in the right-hand diagram in figure \ref{fig:Kl}. The length $|\ell_{|h}|$ is therefore equal to $|h_{|V_\ell}| \cos \theta_h$, and so
\begin{equation}
\int_{V_\ell} K(P_\ell) \, \mathrm{d} V
 = \epsilon_\ell \, |\ell|
 + \sum_h |h_{|V_\ell}| \, \cos^2 \theta_h \, \epsilon_h .
\end{equation}
Dividing by the volume of $V_\ell$ gives the average sectional curvature orthogonal $\ell$ over $V_\ell$.

\end{proof}

The expression for $K_\ell$ can also be simplified a little by re-defining the label $h$ to include the edge $\ell$,
\begin{equation}
K_\ell 
 = \frac{1}{|V_\ell|} \sum_{h|h \cap V_\ell} |h_{|V_\ell}| \, \cos^2 \theta_h \, \epsilon_h ,
\end{equation}
with $\ell_{|V_\ell} = \ell$ and $\theta_\ell := 0$. An average cross-sectional area of $V_\ell$ orthogonal to $\ell$ can also be defined as
 $A_\ell := |V_\ell| / |\ell|$,
 giving a further interpretation of $K_\ell$ as a weighted sum of deficit angles divided by this area
\begin{equation}
K_\ell
 = \frac{\epsilon_\ell}{A_\ell}
 + \sum_h \frac{|h_{|V_\ell}| \cos^2 \theta}{|\ell|}
   \frac{\epsilon_h}{A_\ell} .
\end{equation}
This equation is of a similar form to the single-edge curvature (\ref{singlehK}), with a newly-defined area and higher order contributions from the nearby hinges $h$.

The construction in theorem \ref{thm:Kl} compares much more favourably with the smooth sectional curvatures of the manifolds tested in section \ref{sec:Comp}. It gives exact values for all of the edges in the $3$-cylinder, and the representative graphs in figures \ref{fig:G} and \ref{fig:N3} show a clear convergence to the smooth Gowdy and Nil-$3$ sectional curvatures, as do the percentage errors in tables \ref{tab:GErr} and \ref{tab:N3Err}.



\section{Ricci Curvature and Ricci Flow}
\label{sec:Ric}

\subsection{Ricci curvature along edges}

At a point $p$ in an $n$-dimensional Riemannian manifold $M^n$, the Ricci curvature along some unit vector $\vec{u} \in T_p M$ can be given by the sum of the sectional curvatures for a set of $n - 1$ orthogonal 2-planes, all of which contain the vector $\vec{u}$. For an orthonormal basis $\{ \vec{e_1}, \vec{e_2}, ... , \vec{e_n} \}$ of $T_p M$,
\begin{equation}
\tens{Rc}(\vec{e_1}, \vec{e_1})
 = \sum_{i = 1}^n \left< \tens{Rm}(\vec{e_i}, \vec{e_1}) \vec{e_1},
   \vec{e_i} \right>
 = \sum_{i = 2}^n K(\vec{e_i}, \vec{e_1}) ,
\label{RcK}
\end{equation}
with the final part following directly from the definition of the sectional curvature (\ref{Sec}), with $K(\vec{e_1}, \vec{e_1})$ clearly vanishing. Off-diagonal terms of the Ricci tensor can also be found in terms of the Ricci tensor along a certain combination of vectors. For two vectors $\vec{u}, \vec{v} \in T_p M^n$, an infinitesimal triangle can be formed using the vector $\vec{u} - \vec{v} \in T_p M^n$, and by the bilinearity and symmetry of the Ricci tensor,
\begin{eqnarray}
&&\tens{Rc}(\vec{u} - \vec{v}, \vec{u} - \vec{v})
 = \tens{Rc}(\vec{u}, \vec{u})
 - 2 \tens{Rc}(\vec{u}, \vec{v})
   + \tens{Rc}(\vec{v}, \vec{v})
 \nonumber \\ \Leftrightarrow \qquad
&&\tens{Rc}(\vec{u}, \vec{v})
 = \frac{1}{2}\left(
     \tens{Rc}(\vec{u}, \vec{u})
   + \tens{Rc}(\vec{v}, \vec{v})
   - \tens{Rc}(\vec{u} - \vec{v}, \vec{u} - \vec{v})
     \right) .
\label{Rcuv}
\end{eqnarray}
The entire Ricci tensor at $p$ can therefore be found from the Ricci curvature along a set of $n$ linearly independent vectors in $T_p M$, and along the $\frac{n}{2} (n - 1)$ vectors giving the difference between each pair of these.

For each $n$-simplex $\sigma^n$ in a piecewise flat manifold $S^n$, there are exactly $n$ edges at each vertex and another $\frac{n}{2} (n - 1)$ giving the difference between each pair, since $\bar \sigma^n$ is a Euclidean space. This gives the exact configuration required to completely determine a symmetric bilinear tensor within each $\sigma^n$. If an analogue of the Ricci tensor can be found along each edge $\ell \subset S^n$, then a Ricci tensor can be constructed within each $n$-simplex, and coordinate transformations performed to compare with a given coordinate system in a smooth manifold.

However it seems more convenient to simply compare the values along each edge $\ell \subset S^n$ with the average value of the Ricci tensor tangent to the corresponding geodesic segments $\ell_M \subset M^n$. This correspondence is coordinate independent, and avoids any issues associated with the choosing of coordinate charts. Over the geodesic segments $\ell_M \subset M^n$, the average value of $\tens{Rc}$ along the vector field $\vec{\hat \ell}_M$ tangent to $\ell_M$ can be found by integrating along $\ell_M$ and dividing by its length,
\begin{equation}
\widetilde{\tens{Rc}}_{\ell_M}
 := \frac{1}{|\ell_M|} \int_{\ell_M}
 \tens{Rc} (\vec{\hat \ell}_M, \vec{\hat \ell}_M) \
 \sqrt{\tens{g}_M \left(\frac{d \ell_M}{d s}, \frac{d \ell_M}{d s}\right)}
 \ \mathrm{d} s ,
\label{RclM}
\end{equation}
with $s$ giving a parameterization of $\ell_M$.

\subsection{Three dimensional Ricci curvature}

In three dimensions, the Ricci curvature along a vector in a smooth manifold $M^n$ can be given by both the scalar and sectional curvatures.

\begin{lemma}[Smooth Ricci curvature in three dimensions]
\label{lem:RcM3}
The Ricci curvature along a unit vector $\vec{u} \in T_p M^3$ can be given in terms of the scalar curvature $R$ and the sectional curvature $K^\perp (\vec{u})$ of the plane orthogonal to $\vec{u}$ by the expression
\begin{equation}
^{(3)}\tens{Rc} (\vec{u}, \vec{u})
 = \frac{1}{2} R - K^\perp (\vec{u}) .
\label{RcM3}
\end{equation}
\end{lemma}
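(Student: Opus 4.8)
The plan is to reduce the identity to a purely algebraic rearrangement of sectional curvatures, exploiting the fact that in three dimensions the orthogonal complement of any unit vector is exactly a $2$-plane. First I would fix an orthonormal basis $\{\vec{e_1}, \vec{e_2}, \vec{e_3}\}$ of $T_p M^3$ with $\vec{e_1} = \vec{u}$, so that the plane orthogonal to $\vec{u}$ is precisely the one spanned by $\vec{e_2}$ and $\vec{e_3}$. This yields the identification $K^\perp(\vec{u}) = K(\vec{e_2}, \vec{e_3})$, which is the geometric content that makes the three-dimensional case special.

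Next I would write out the two curvatures appearing on the right-hand side in this basis. From (\ref{RcK}), the Ricci curvature along $\vec{u}$ is the sum of the sectional curvatures of the two planes containing $\vec{u}$,
\[
\tens{Rc}(\vec{u}, \vec{u}) = K(\vec{e_2}, \vec{e_1}) + K(\vec{e_3}, \vec{e_1}),
\]
while (\ref{Scal}) expresses the scalar curvature as twice the sum over the complete set of three orthogonal $2$-planes,
\[
R = 2\left[ K(\vec{e_1}, \vec{e_2}) + K(\vec{e_1}, \vec{e_3}) + K(\vec{e_2}, \vec{e_3}) \right].
\]

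The identity then follows by subtraction: halving the scalar curvature and removing the single plane orthogonal to $\vec{u}$ leaves exactly the two planes containing $\vec{u}$,
\[
\frac{1}{2}R - K^\perp(\vec{u}) = K(\vec{e_1}, \vec{e_2}) + K(\vec{e_1}, \vec{e_3}),
\]
which coincides with $\tens{Rc}(\vec{u}, \vec{u})$ once I invoke the symmetry $K(\vec{e_i}, \vec{e_j}) = K(\vec{e_j}, \vec{e_i})$ of the sectional curvature implicit in (\ref{Sec}).

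I do not expect any genuine obstacle, since the argument is essentially a counting identity; the only point requiring care is the dimension count itself. The clean cancellation relies specifically on $n = 3$, where there are exactly three orthogonal $2$-planes, precisely two of which contain $\vec{u}$, leaving a single orthogonal plane to be identified with $K^\perp(\vec{u})$. In higher dimensions this balance fails, which is why the statement is restricted to three dimensions. The one subtlety I would flag explicitly is the basis-independence of both $\tens{Rc}(\vec{u}, \vec{u})$ and $K^\perp(\vec{u})$, which guarantees that the particular completion of $\vec{u}$ to the frame $\{\vec{e_1}, \vec{e_2}, \vec{e_3}\}$ is immaterial to the final expression.
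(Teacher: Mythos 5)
Your proposal is correct and follows essentially the same route as the paper: both express $R$ via (\ref{Scal}) as twice the sum of the three sectional curvatures in an orthonormal frame adapted to $\vec{u}$, use (\ref{RcK}) to identify $\tens{Rc}(\vec{u},\vec{u})$ with the two planes containing $\vec{u}$, and subtract, noting that in three dimensions the remaining plane is determined by $\vec{u}$. Your added remarks on basis-independence and the failure of the counting in higher dimensions are consistent with, though not spelled out in, the paper's argument.
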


\begin{proof}
The scalar curvature in three dimensions, defined as the trace of the Ricci curvature, can be given in terms of a sum of sectional curvatures using (\ref{Scal}),
\begin{equation}
R = \sum_{i = 1}^n \tens{Rc}(\vec{e_i}, \vec{e_i})
 = 2 \sum_{i < j} K(\vec{e_i}, \vec{e_j})
 = 2 \left( K(\vec{e_1}, \vec{e_2}) + K(\vec{e_1}, \vec{e_3}) +
 K(\vec{e_2}, \vec{e_3}) \right) .
\end{equation}
The Ricci curvature along $\vec{e_1}$ in three dimensions is then given from (\ref{RcK}) as
\begin{equation}
^{(3)}\tens{Rc} (\vec{e_1}, \vec{e_1})
 = K(\vec{e_1}, \vec{e_2}) + K_(\vec{e_1}, \vec{e_3})
 = \frac{1}{2} R - K_(\vec{e_2}, \vec{e_3}) .
\end{equation}
Since the plane $P(\vec{e_2}, \vec{e_3})$ is entirely determined in three dimensions by the vector $\vec{e_1}$ orthogonal to it, the Ricci curvature along any unit vector $\vec{u}$ can be defined in terms of the sectional curvature $K^\perp (\vec{u})$ of the plane orthogonal to $\vec{u}$.

\end{proof}

Since piecewise flat analogues of the scalar curvature, and of the sectional curvature orthogonal to an edge $\ell \subset S^3$, have already been given, the expression in lemma \ref{lem:RcM3} can be used to give a piecewise flat Ricci curvature along $\ell$.

\begin{theorem}[Piecewise flat Ricci curvature]
\label{thm:Rcl}
In a piecewise flat manifold $S^3$, the average Ricci curvature along an edge $\ell$ is
\begin{equation}
\tens{Rc}_\ell := \widetilde{\tens{Rc}} (\ell) = \frac{1}{2 |\ell|} \left(|\ell_{|V_1}| R_{v_1} + |\ell_{|V_2}| R_{v_2}\right) - K_\ell ,
\end{equation}
with $\ell_{|V_1} = \ell \cap V_{v_1}$ and $\ell_{|V_2} = \ell \cap V_{v_2}$ for the vertices $v_1, v_2 \in \bar \ell$.
\end{theorem}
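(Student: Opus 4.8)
The plan is to discretize the pointwise smooth identity of Lemma \ref{lem:RcM3} by averaging it along the edge $\ell$, and then to substitute the piecewise flat scalar and sectional curvature analogues already in hand. First I would note that at every point of the corresponding geodesic segment $\ell_M \subset M^3$ one has
\[
{}^{(3)}\tens{Rc}(\hat\ell, \hat\ell) = \tfrac{1}{2} R - K^\perp(\hat\ell),
\]
by Lemma \ref{lem:RcM3}, with $K^\perp(\hat\ell)$ the sectional curvature of the plane orthogonal to $\hat\ell$. Inserting this into the definition (\ref{RclM}) of $\widetilde{\tens{Rc}}(\ell)$ and using linearity of the line integral splits the average Ricci curvature into a scalar contribution $\tfrac12 \widetilde R_\ell$ and an orthogonal sectional contribution $\widetilde{K^\perp}_\ell$, each an average taken along $\ell$.

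Next I would identify the sectional contribution with $K_\ell$. By Definition \ref{def:Vh} the edge volume $V_\ell$ is foliated over the whole length of $\ell$ by the surfaces $P_\ell$ orthogonal to $\ell$, so the volume average of $K^\perp$ over $V_\ell$ coincides with the line average along $\ell$ of the cross-sectional integrated curvature divided by the cross-sectional area. This is precisely the quantity computed in Theorem \ref{thm:Kl}, whence $\widetilde{K^\perp}_\ell = K_\ell$.

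For the scalar contribution I would use that $\ell$ meets exactly the two vertex volumes $V_{v_1}$ and $V_{v_2}$ in the segments $\ell_{|V_1}$ and $\ell_{|V_2}$, with $|\ell_{|V_1}| + |\ell_{|V_2}| = |\ell|$. Taking the piecewise flat scalar curvature to be constant on each vertex volume, equal to $R_{v_1}$ on $V_{v_1}$ and $R_{v_2}$ on $V_{v_2}$ by Theorem \ref{thm:R}, the line average of $R$ along $\ell$ reduces to the length-weighted combination $\frac{1}{|\ell|}\left(|\ell_{|V_1}| R_{v_1} + |\ell_{|V_2}| R_{v_2}\right)$. Multiplying by $\tfrac12$ and subtracting $K_\ell$ then gives the stated formula, and it collapses to the symmetric form $\tfrac14(R_{v_1}+R_{v_2}) - K_\ell$ of Theorem \ref{thm:1.1} when $|\ell_{|V_1}| = |\ell_{|V_2}| = |\ell|/2$, as for Voronoi or barycentric duals.

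The main obstacle is the consistency of the two averaging regions entering the scalar term: $R_v$ is genuinely a three-volume average over the entire $V_v$, yet in the Ricci formula it is used as the value of $R$ along the one-dimensional segment $\ell_{|V_v}$. The step requiring the most care is therefore justifying that $R_{v_1}$ and $R_{v_2}$ are the correct piecewise flat representatives of the scalar curvature along $\ell_{|V_1}$ and $\ell_{|V_2}$ respectively --- that is, that to the working order in the deficit angles the scalar curvature may be treated as constant on each vertex volume, so that its restriction to the segment threading that volume inherits the same value. Once this representation is granted, the remaining steps are direct substitutions of Theorems \ref{thm:R} and \ref{thm:Kl}.
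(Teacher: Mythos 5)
Your proposal is correct and follows essentially the same route as the paper: the paper's proof is a one-line appeal to Lemma \ref{lem:RcM3} with the scalar curvature at each point $p \in \ell$ taken to be $R_v$ for the volume $V_v \ni p$, which is exactly the averaging and substitution you carry out in detail. Your closing remark about the consistency of using a volume average $R_v$ as the representative value along the one-dimensional segment $\ell_{|V_v}$ correctly identifies the assumption the paper leaves implicit.
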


\begin{proof}
The result follows from lemma \ref{lem:RcM3}, with the value of the scalar curvature at each point $p \in \ell$ given by the value $R_v$ for the volume $V_v \ni p$.

\end{proof}

While this construction does not use the same approach as both the scalar and sectional curvatures, such a construction is outlined in appendix \ref{sec:RcSn}, which also extends to arbitrary dimensions. Unfortunately, an appropriate volume over which to compute the Ricci curvature has not yet been found even in three dimensions, leaving this approach incomplete at present.

\begin{remark}
Interestingly, equation (\ref{RcM3}) can be seen as a component version of the expression for the Einstein tensor in three dimensions. While the variation of the Regge action gives a piecewise flat analogue of the Einstein field equations as $\epsilon_h = 0$, this equation now gives a piecewise flat construction that converges to the smooth Einstein tensor. Although for the vacuum equations of general relativity $\epsilon = 0 \Leftrightarrow -K_{\ell} = 0$, any inclusion of matter will likely require an Einstein tensor that converges to the smooth value.
\end{remark}

\subsection{Ricci flow}
\label{sec:RF}

On a smooth time-varying manifold $M^n$, the Ricci flow acts to uniformize the metric \cite{ChowKnopfRF}, giving a rate of change of the metric in terms of the Ricci curvature tensor
\begin{equation}
\frac{d \tens{g}_M}{d t}
 = - 2 \tens{Rc} .
\label{RF}
\end{equation}
A normalized Ricci flow, keeping the total volume constant, is more useful in many situations and was actually the form first introduced by Hamilton \cite{HamRf},
\begin{equation}
\frac{d \tens{g}_M}{d t}
 = - 2 \tens{Rc} + \frac{2}{n} \widetilde{R}_M \, \tens{g}_M ,
\label{RFn}
\end{equation}
with $\widetilde{R}_M$ giving the average of the scalar curvature over the whole manifold. The effect of these flows on the lengths of geodesic segments can be given in a straight forward way.

\begin{lemma}[Smooth geodesic Ricci flow]
\label{lem:RFlM}
The fractional rate of change of the length of a geodesic segment $\ell_M$ under normalized Ricci flow in a smooth manifold $M^n$ is
\begin{equation}
\frac{1}{|\ell_M|} \frac{d |\ell_M|}{d t}
 = - \widetilde{\tens{Rc}}_{\ell_M}
   + \frac{1}{n} \widetilde{R}_M ,
\label{RFlM}
\end{equation}
where $\widetilde{\tens{Rc}}_{\ell_M}$ represents the average of the component of $\tens{Rc}$ tangent to $\ell_M$ over the length of $\ell_M$, as in equation (\ref{RclM}).
\end{lemma}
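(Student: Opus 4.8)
The plan is to differentiate the arc-length functional (\ref{Geodesic}) directly, feeding in the normalized Ricci flow equation (\ref{RFn}) for the time derivative of the metric. Writing $T := d\ell_M/ds$ for the tangent field, the length is $|\ell_M| = \int_{\ell_M} \sqrt{g_M(T,T)}\,ds$, so there are a priori two sources of $t$-dependence: the evolving metric $g_M$, and the curve $\ell_M$ itself, since which curve is geodesic changes as $g_M$ flows.

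First I would eliminate the second source. The two endpoints of $\ell_M$ are held fixed as points of $M^n$ (the vertices of the complex), so the deformation $\dot\ell_M$ realizing the motion of the geodesic vanishes at the endpoints. Because $\ell_M(t)$ is a geodesic of $g_M(t)$, it is a critical point of the length functional $\gamma \mapsto \int_\gamma \sqrt{g_M(T,T)}\,ds$ among endpoint-preserving curves, so the first variation of arc length along $\dot\ell_M$ is zero. Hence the total time derivative of $|\ell_M|$ equals the partial derivative taken with the curve held fixed. This first-variation step is the one I expect to be the main obstacle to state cleanly: one must be sure that the variation field $\dot\ell_M$ is admissible (endpoint-fixing), so that the vanishing of the first variation genuinely kills the curve-motion contribution; once that is granted the rest is bookkeeping.

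With the curve fixed I would differentiate under the integral sign, using $\frac{d}{dt}\sqrt{g_M(T,T)} = \frac{1}{2\sqrt{g_M(T,T)}}\,\frac{\partial g_M}{\partial t}(T,T)$, and substitute (\ref{RFn}) to get $\frac{\partial g_M}{\partial t}(T,T) = -2\,\mathrm{Rc}(T,T) + \tfrac{2}{n}\widetilde{R}_M\,g_M(T,T)$. Setting $T = |T|\,\hat\ell_M$ with $|T| = \sqrt{g_M(T,T)}$, bilinearity gives $\mathrm{Rc}(T,T) = |T|^2\,\mathrm{Rc}(\hat\ell_M,\hat\ell_M)$ and $g_M(T,T) = |T|^2$, so one power of $|T|$ cancels the $1/\sqrt{g_M(T,T)}$ and the integrand collapses to
\[
\frac{d|\ell_M|}{dt} = \int_{\ell_M}\left(-\,\mathrm{Rc}(\hat\ell_M,\hat\ell_M) + \tfrac{1}{n}\widetilde{R}_M\right)\sqrt{g_M(T,T)}\,ds .
\]

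Finally I would identify the two pieces. Since $\widetilde{R}_M$ is a spatial average over all of $M^n$ it depends only on $t$, so it factors out against $\int_{\ell_M}\sqrt{g_M(T,T)}\,ds = |\ell_M|$, while the Ricci term is precisely $-\,|\ell_M|\,\widetilde{\mathrm{Rc}}_{\ell_M}$ by the definition (\ref{RclM}) of the tangential average. Dividing through by $|\ell_M|$ yields the claimed fractional rate of change. The only genuine inputs are the first-variation vanishing and the homogeneity of $g_M$ and $\mathrm{Rc}$ in the tangent vector; everything else is routine differentiation under the integral.
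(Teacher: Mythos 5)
Your proposal is correct and follows essentially the same route as the paper: differentiate the arc-length functional under the integral sign, substitute the normalized Ricci flow equation for $\partial_t \tens{g}_M$, and use the homogeneity of $\tens{g}_M$ and $\tens{Rc}$ in the tangent vector to reduce to unit tangents and identify the averages. Your additional first-variation argument --- that the motion of the geodesic itself contributes nothing because the endpoints are fixed and a geodesic is a critical point of the length functional --- is a point the paper's proof passes over silently, and it is a genuine (and correct) strengthening of the argument rather than a deviation from it.
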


\begin{proof}
From the equation for the geodesic length (\ref{Geodesic}), the rate of change of a geodesic segment $\ell_M$ is
\begin{equation}
\frac{d |\ell_M|}{d t}
 = \frac{d}{d t} \int_{\ell_M}
 \sqrt{\tens{g}_M
 \left(\frac{d \ell_M}{d s}, \frac{d \ell_M}{d s}\right)}
 \ \mathrm{d} s
 = \int_{\ell_M}
 \frac{
 \frac{d}{d t} \tens{g}_M
       \left(\frac{d \ell_M}{d s}, \frac{d \ell_M}{d s}\right)}
 {2 \sqrt{\tens{g}_M
       \left(\frac{d \ell_M}{d s}, \frac{d \ell_M}{d s}\right)}
 }
 \ \mathrm{d} s ,
\end{equation}
for some parameter $s$ of $\ell_M$, using the chain rule for the last part. For normalized Ricci flow, the derivative of the metric component on the top line is given by (\ref{RFn}), so that
\begin{equation}
\frac{d |\ell_M|}{d t}
 = \int_{\ell_M}
 \frac{
 - \tens{Rc} \left(\frac{d \ell_M}{d s}, \frac{d \ell_M}{d s}\right)
 + \frac{1}{n} \widetilde{R}_M \,
  \tens{g}_M \left(\frac{d \ell_M}{d s}, \frac{d \ell_M}{d s}\right)}
 {\sqrt{\tens{g}_M
       \left(\frac{d \ell_M}{d s}, \frac{d \ell_M}{d s}\right)}
 }
 \ \mathrm{d} s .
\end{equation}
The unit vectors $\vec{\hat \ell_M}$ tangent to $\ell_M$ differ only in scale from the vectors $\frac{d \ell_M}{d s}$. Multiplying above and below inside the integral, by the square of the length of $\frac{d \ell_M}{d s}$,
\begin{eqnarray}
\frac{d |\ell_M|}{d t}
 &=& \int_{\ell_M}
 \left(
 - \tens{Rc} \left(\vec{\hat \ell}_M, \vec{\hat \ell}_M\right)
 + \frac{1}{n} \widetilde{R}_M
 \right)
 \sqrt{\tens{g}_M \left(\frac{d \ell_M}{d s}, \frac{d \ell_M}{d s}\right)}
 \ \mathrm{d} s \\
 &=& - |\ell_M| \ \widetilde{\tens{Rc}}_{\ell_M}
     + \frac{1}{n} \widetilde{R}_M \, |\ell_M| ,
\end{eqnarray}
with $\widetilde{\tens{Rc}}_{\ell_M}$ giving the average value of $\tens{Rc} \left(\vec{\hat \ell}_M, \vec{\hat \ell}_M\right)$ over the length of $\ell_M$, from (\ref{RclM}).

\end{proof}

This shows that the fractional change in the lengths of geodesic segments depends only on the average Ricci curvature tangent to the edges, and the average of the total scalar curvature. This implies the change in edge-lengths of triangulations of a fixed simplicial complex on $M^n(t)$, also depend only on the average Ricci curvature tangent to the corresponding geodesic segments. Since this is precisely what is being approximated by $\tens{Rc}_\ell$, a piecewise flat Ricci flow can be given, noting that the average scalar curvature over the entire piecewise flat manifold $S^n$ is
\begin{equation}
\widetilde R_S := \frac{2}{|S^n|} \sum_{h \subset S^n} |h| \epsilon_h ,
\end{equation}
using corollary \ref{cor:IntR}, with $|S^n|$ representing the full $n$-volume of $S^n$.

\begin{theorem}[Piecewise flat Ricci flow]
\label{prop:RFS}
For a piecewise flat triangulation $S^3$ of a smooth manifold $M^3$, the evolution of the edge lengths $\ell \subset S^3$ by the equation
\begin{equation}
\frac{1}{|\ell|} \frac{d |\ell|}{d t}
 = - \mathrm{Rc}_\ell
   + \frac{1}{n} \widetilde{R}_S ,
\label{RFS}
\end{equation}
gives a piecewise flat approximation of the evolution of $M^3$ under Ricci flow, as long as the simplicial complex of $S^3$ remains a good triangulation for $M^3$.
\end{theorem}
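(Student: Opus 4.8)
The plan is to recover the edge-length equation (\ref{RFS}) by discretizing the smooth geodesic Ricci flow of lemma \ref{lem:RFlM}, using that every piecewise flat quantity on the right-hand side has already been identified as an approximation of its smooth counterpart. First I would note that, by definition \ref{def:Triang}, each edge-length of $S^3$ equals the geodesic length of the corresponding segment $\ell_M \subset M^3$, so that $|\ell| = |\ell_M|$ at each instant. Under the smooth normalized flow (\ref{RFn}), lemma \ref{lem:RFlM} then gives the exact fractional rate of change of each such length as $\frac{1}{|\ell_M|}\frac{d|\ell_M|}{dt} = -\widetilde{\tens{Rc}}_{\ell_M} + \frac{1}{n}\widetilde R_M$, which depends only on the average Ricci curvature tangent to $\ell_M$, defined in (\ref{RclM}), and on the global average scalar curvature $\widetilde R_M$.

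The central step is to substitute the available piecewise flat approximations for these two smooth quantities. By theorem \ref{thm:Rcl} the discrete Ricci curvature $\tens{Rc}_\ell$ approximates $\widetilde{\tens{Rc}}_{\ell_M}$: it is assembled, through lemma \ref{lem:RcM3}, from the vertex scalar curvature $R_v$ of theorem \ref{thm:R} and the edge-orthogonal sectional curvature $K_\ell$ of theorem \ref{thm:Kl}, each of which converges to its smooth value to leading order in the deficit angles. Likewise $\widetilde R_S = \frac{2}{|S^n|}\sum_h |h|\epsilon_h$, obtained from corollary \ref{cor:IntR}, approximates $\widetilde R_M$. Replacing $\widetilde{\tens{Rc}}_{\ell_M}$ by $\tens{Rc}_\ell$ and $\widetilde R_M$ by $\widetilde R_S$ in the smooth rate equation yields (\ref{RFS}) as the natural piecewise flat discretization of the induced evolution of the edge-lengths. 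Since normalized flow preserves total volume, no additional global rescaling from definition \ref{def:Triang} is needed as the flow proceeds.

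Finally I would address the hypothesis that $S^3$ remains a good triangulation of $M^3$. The substituted approximations are accurate only to leading order in the deficit angles, the higher-order corrections of the earlier lemmas having been truncated, so (\ref{RFS}) reproduces the smooth flow only while the deficit angles stay small and shortest geodesics remain unique and well defined. I expect the main obstacle to be exactly this temporal consistency: at any fixed time the substitution is immediate, but one must argue that evolving the lengths by (\ref{RFS}) keeps $S^3$ tracking the smooth metric $M^3(t)$ rather than accumulating error or degrading the triangulation. The statement handles this not through a quantitative error bound but by carrying the quality of the triangulation as an explicit standing hypothesis, so the theorem asserts consistency of the discretization rather than uniform-in-time convergence; a fully rigorous convergence result would require controlling how the deficit angles themselves evolve under (\ref{RFS}), which lies beyond the leading-order framework developed here.
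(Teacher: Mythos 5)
Your proposal follows essentially the same route as the paper's proof: fix the simplicial complex so the evolution is determined by the geodesic segments, invoke lemma \ref{lem:RFlM} to reduce the flow to the average tangential Ricci curvature and the global average scalar curvature, substitute $\tens{Rc}_\ell$ and $\widetilde R_S$ for their smooth counterparts, and use $|\ell| = |\ell_M|$ from the triangulation definition. Your added remarks on temporal consistency correctly identify the same caveat the paper relegates to the standing hypothesis that $S^3$ remains a good triangulation.
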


\begin{proof}
For a fixed simplicial complex on $M^3$, the triangulation $S^3 (t)$ of the evolution of $M^3$ is completely determined by the evolution of the geodesic segments $\ell_M \subset M^3$. It follows from lemma \ref{lem:RFlM} that this evolution depends only on the average Ricci curvature along the geodesic segments. As long as the piecewise flat Ricci curvature $\mathrm{Rc}_\ell$ and average scalar curvature $\widetilde{R}_S$ give close approximations for $\widetilde{\mathrm{Rc}}_{\ell_M}$ and $\widetilde{R}_M$,
\begin{equation}
\frac{1}{|\ell|} \frac{d |\ell|}{d t}
 = \frac{1}{|\ell_M|} \frac{d |\ell_M|}{d t}
 = - \widetilde{\tens{Rc}}_{\ell_M}
   + \frac{1}{n} \widetilde{R}_M
 \simeq - \mathrm{Rc}_\ell
        + \frac{1}{n} \widetilde{R}_S ,
\end{equation}
since the lengths of the edges $|\ell| := |\ell_M|$ by definition.
\end{proof}

There is no guarantee that the simplicial complex of $S^3$ will continue to give a good approximation for $M^3 (t)$ for late times $t$, however the size of the deficit angles $\epsilon_h$ of $S^3 (t)$ should give a reasonable measure of this. The non-normalized Ricci flow can easily be given by dropping the second term on the right hand sides of both (\ref{RFlM}) and (\ref{RFS}).

\section{Computations}
\label{sec:Comp}

The piecewise flat curvature and Ricci flow analogues have been tested on triangulations of a number of manifolds. These begin with the $3$-sphere and $3$-cylinder which have high degrees of symmetry, with the curvature inherited from the $2$-sphere in the latter case. These are followed by the more complicated geometries of the Gowdy and Nil-$3$ manifolds, each having both positive and negative sectional curvature and specific challenges for triangulating.

For all of the triangulations used, the edges (hinges) $h$ intersecting each volume $V_\ell$ are all contained in the closure of $\mathrm{star}(\ell)$, i.e. they all share a tetrahedron with $\ell$. This is not unusual for triangulations containing tetrahedra that are somewhat close to equilateral, and it means that all of the edges used in the construction for $K_\ell$ share a triangle with $\ell$. The area of each triangle $\sigma^2 \subset \mathrm{star}(\ell)$ can be found in two ways
\begin{equation}
\mathrm{Area}(\ell, h_1, h_2)
 = \frac{1}{2} \, |\ell| \, z
 = \frac{1}{4} \sqrt{
   (\ell + h_1 + h_2) (\ell + h_1 - h_2) (\ell + h_2 - h_1) (h_1 + h_2 - \ell)} ,
\label{CompKl}
\end{equation}
where $z$ is the perpendicular distance from $\ell$ to the third vertex of $\sigma^2$, see figure \ref{fig:Ktri}. The length of $z$ can be found in terms of edge-lengths using the second equation for the triangle area above, with Pythagoras theorem then giving the length $d$, with $\cos \theta = d/|h_1|$.

\begin{figure}[h]
\begin{center}
\includegraphics[scale=0.2]{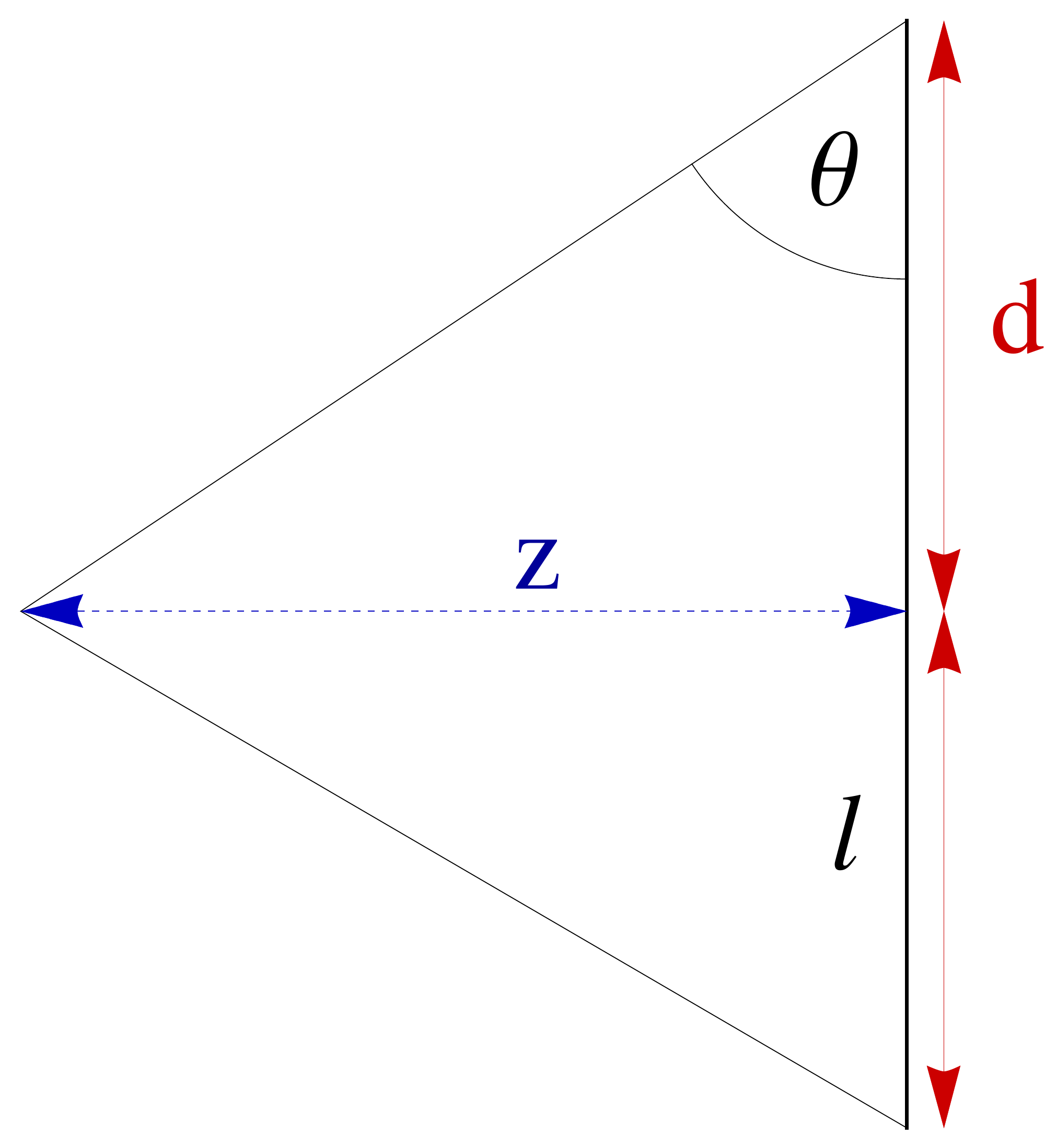}
\end{center}
\vspace{-0.5cm}
\caption{A triangle containing the edge $\ell$, with the lengths $z$ and $d$ outlined.}
\label{fig:Ktri}
\end{figure}

The dual volumes $V_v$ are also chosen to be either Voronoi or barycentric, so that the lengths $|h_{|V_\ell}| = \frac{1}{2}|h|$. Due to all of this, the equation for the sectional curvature orthogonal to an edge $\ell$ from theorem \ref{thm:Kl} can be given in a simplified form
\begin{equation}
K_\ell
 = \frac{1}{|V_\ell|} \left(
     |\ell| \epsilon_\ell
   + \frac{1}{2} \sum_{\sigma^2 \subset \mathrm{star}(\ell)}
     \left(
       \frac{d^2}{|h_1|} \, \epsilon_1
     + \frac{(|\ell| - d)^2}{|h_2|} \, \epsilon_2
     \right)
   \right) ,
\label{KlRed}
\end{equation}
with $h_1$ and $h_2$ making up the remaining sides of each triangle $\sigma^2$, and their deficit angles given by $\epsilon_1$ and $\epsilon_2$ respectively. It is also important that the angles $\theta_h$ do not increase far beyond $\pi/2$. Though conditional statements can be included for such cases, the differences have been negligible for the few situations that have arisen here.

\subsection{3-Sphere}
\label{sec:3Sph}

A $3$-sphere $M^3_{Sph}$ is a homogeneous, isotropic and compact manifold which is completely determined by its radius $r$. The sectional curvature is invariant to the orientation of a $2$-plane or location on $M^3_{Sph}$ and given by $K_{Sph} = \frac{1}{r^2}$. This then gives the Ricci curvature along any vector as $\tens{Rc}_{Sph} = \frac{2}{r^2}$, and the scalar curvature as $R_{Sph} = \frac{6}{r^2}$. The Ricci flow equations can also be reduced to an evolution equation for the radius, with $\frac{d r}{d t} = - \frac{2}{r}$.

Due to its high level of symmetry, a $3$-sphere can be triangulated in a regular way using equilateral tetrahedra.
 For such a triangulation the curvature and Ricci flow need only be computed at a single vertex or edge, with all of the others giving equivalent results. The dihedral angle at an edge of an equilateral tetrahedron in Euclidean space is always $\theta_s = \arccos \left(\frac{1}{3}\right) \simeq 70.53^{\circ}$. As a result, symmetric triangulations can be formed with anywhere from two to five equilateral tetrahedra in the closure of each edge, with any more giving negative deficit angles. The first of these contains only two tetrahedra but is in some sense degenerate, see for example \cite{DoubleTet}. The remaining triangulations are given by $5$, $16$ and $600$-cell models, with the particulars of each given in table \ref{tab:cells}.

\begin{table}[h]
\centering
\begin{tabular}{|r||c|c|c||c|c|c|}
\hline
Model &
 Triangles & Edges & Vertices &
 $\sigma^3 \subset \mathrm{star}(\ell)$ &
 $\sigma^3 \subset \mathrm{star}(v)$ &
 $\ell \subset \mathrm{star}(v)$ \\
\hline
$5$-cell   &   $10$ &  $10$ &   $5$  &  $3$ &  $4$ &  $4$ \\
$16$-cell  &   $32$ &  $24$ &   $8$  &  $4$ &  $8$ &  $6$ \\
$600$-cell & $1200$ & $720$ & $120$  &  $5$ & $20$ & $12$ \\
\hline
\end{tabular}
\caption{The number of each type of simplex in each model, the number of tetrahedra around each edge and vertex, and the number edges at each vertex.}
\label{tab:cells}
\end{table}

The edge-lengths for each triangulation can be given in terms of the radius $r$ of the smooth $3$-sphere, with these shown in table \ref{tab:Sph}. The choice of dual tessellation is also unambiguouss since both the circumcenter and barycenter coincide with the symmetric center of an equilateral tetrahedron. The only term that cannot be computed easily is the restriction of a vertex volume $V_v$ to an edge $\ell$. To approximate this, the average radius is first found by dividing $V_v$ by the solid angle at a vertex, with this radius then multiplied by the solid angle of a hemisphere in Euclidean space. This gives a valid approximation since the volume $V_v$ forms a regular polygon, with the restricted volume $V_{v|\ell}$ approximating a smooth hemisphere. For an equilateral tetrahedron, the solid angle at a vertex is $\alpha_v = \arccos \left( \frac{23}{27} \right)$ and the volume $V_{\sigma^3} = \frac{|\ell|^3}{6 \sqrt{2}}$. This gives
\begin{equation}
V_{v|\ell}
 \simeq 2 \pi \frac{V_v}{s_v \alpha_v}
  = \frac{2 \pi}{s_v \alpha_v} \frac{s_v}{4} V_{\sigma^3}
  = \frac{\pi \, |\ell|^3}{12 \, \sqrt{2} \, \alpha_v} ,
\label{3sphVv}
\end{equation}
with $s_v$ giving the number of tetrahedra in the star of the vertex $v$. With the edge lengths and deficit angles equivalent for all edges throughout each triangulation, the sectional curvature equation (\ref{KlRed}) then reduces to
\begin{equation}
K_\ell
 = \frac{1}{2 \, V_{v|\ell}}
     \left(1 + \frac{s_\ell}{4} \right) |\ell| \, \epsilon
 = \frac{6 \, \sqrt{2} \, \alpha_v}{\pi \, |\ell|^2}
     \left(1 + \frac{s_\ell}{4} \right) \epsilon ,
\end{equation}
with $s_\ell$ representing the number of triangles in the star of $\ell$. This is equivalent to the number of tetrahedra in the star of $\ell$, which can be found in table \ref{tab:cells} for each of the three triangulations.

The Ricci curvature and Ricci flow equation can be found directly from the scalar and sectional curvatures. The results for the curvatures for each triangulation are displayed in table \ref{tab:Sph}, and can be seen to converge to the smooth manifold values as the triangulations become more dense and the deficit angles decrease. The Ricci flow equations can be reduced to a differential equation for the radius $r$ in both the smooth and piecewise flat manifolds, with the piecewise flat equations also converging to the smooth expression for triangulations with increasing resolution.

\begin{table}[h]
\centering
\begin{tabular}{|r||c|c||c|c|c||c|}
\hline
Model & $|\ell|$ & $\epsilon$ &
 $R_v$ & $K_\ell$ & $\tens{Rc}_\ell$ &
 $d r / d t$ \\
\hline
$5$-cell   & $3.22 r$ & $2.59$ &
 $8.46045 /r^2$ & $0.649529 /r^2$ &
 $3.5807 /r^2$  & $-3.5807 /r$ \\
$16$-cell  & $2.18 r$ & $1.36$ &
 $7.23104 /r^2$ & $0.845933 /r^2$ &
 $2.76959 /r^2$ & $-2.76959 /r$ \\
$600$-cell & $0.65 r$ & $0.128$ &
 $6.12124 /r^2$ & $1.00702 /r^2$ &
 $2.0536 /r^2$ & $-2.0536 /r$ \\
\hline
$3$-sphere &  &   &
 $6 /r^2$ & $1 /r^2$ &
 $2 /r^2$ & $-2 /r$ \\
\hline
\end{tabular}
\caption{Edge-lengths and deficit angles for the $3$-sphere triangulations, with piecewise flat curvatures and the Ricci flow equation in terms of the smooth radius $r$.}
\label{tab:Sph}
\end{table}

\begin{remark}
It should be noted that the approximations for the Ricci curvature values are slighly closer for a sectional curvature based on a single edge (\ref{singlehK}), with the deficit angle divided by three times the average hybrid volume associated with each edge. This is equivalent to the sectional curvature definition in \cite{SRF}, giving the same results as the `simplicial Ricci flow' there. The factor of three for the hybrid volume removes the issues with tessellating a space for the sectional curvature, but likely only gives correct results due to the high level of symmetry of the triangulations. The slight improvement over the current approach likely comes from the approximations of the volume $V_{v|\ell}$ in equation (\ref{3sphVv}). Deeper problems with a single-hinge sectional curvature definition become more apparent in the remaining manifolds tested however.
\end{remark}

\subsection{3-Cylinder}
\label{sec:3Cyl}

A $3$-cylinder is given by the direct product of a $2$-sphere with $\mathbb{R}^1$, giving a line element of
\begin{equation}
\mathrm{d} s^2
 = r^2 \mathrm{d} \theta^2
 + r^2 \sin^2 \theta \, \mathrm{d} \phi^2
 + \mathrm{d} z^2 .
\label{3Cyl}
\end{equation}
The intrinsic curvature comes directly from the $2$-dimensional curvature of the $2$-sphere, with $K_{Cyl}^\theta = \frac{1}{r^2} \cos^2 \theta$ for a $2$-plane making an angle $\theta$ with the tangent space of the spherical part. This then gives a scalar curvature of $R_{Cyl} = \frac{2}{r^2}$ and the Ricci curvature along a vector $\vec{u}$ as $\tens{Rc}_{Cyl}^\theta = \frac{1}{r^2} \sin^2 \theta$ where $\vec{u}$ makes an angle $\theta$ with the $z$-coordinate vector.

The most simple triangulation of the $3$-cylinder uses a regular icosahedron for each $2$-sphere, formed by $20$ equilateral triangles. The vertices of neighbouring icosahedra are then joined by line segments orthogonal to each of the triangles in the star of each vertex. This gives a decomposition into a set of Euclidean prisms, which can each be subdivided into three equal-volume tetrahedra by including diagonals of the rectangular sides as edges.

\begin{figure}[h]
\begin{center}
\includegraphics[scale=0.25]{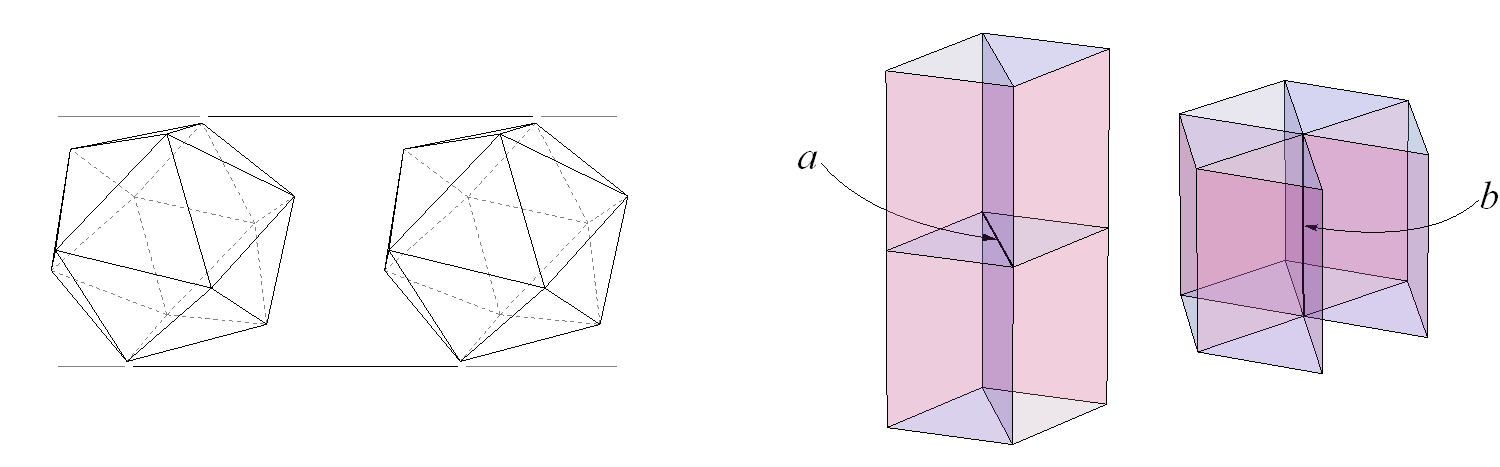}
\end{center}
\vspace{-0.5cm}
\caption{Two icosahedra and the prisms in the star of both an icosahedral edge $a$ and orthogonal edge $b$.}
\label{fig:3CylPrism}
\end{figure}

The decomposition into prisms alone can be used as a piecewise flat decomposition, if the orthogonality between the icosahedral and non-icosahedral edges, denoted $a$ and $b$ respectively, is enforced. The deficit angles $\epsilon_a$ are zero from figure \ref{fig:3CylPrism}, while the edges $b$ have the same deficit angle as the vertex on the $2$-sphere triangulation, $\epsilon_b = \pi/3$. Both the maximum and average cross sectional area of a prism-based Voronoi volume dual to $b$ are also equivalent to the Voronoi area of a vertex in $S^2_{Sph}$, giving the sectional curvature $K_b$ equal to that of a vertex in an icosahedron. These then match the sectional curvatures orthogonal to $a_M$ and $b_M$ in the smooth manifold. The prism-based piecewise flat decomposition was used effectively in \cite{SRF} to give the Ricci flow of a $3$-cylinder, and was also extended in \cite{MNeckPinch1,MNeckPinch2} to non-fixed radial lengths along $\mathbb{R}^1$, producing a dumbbell model, where a good approximation of the Ricci flow behaviour was also found.

\begin{figure}[h]
\begin{center}
\includegraphics[scale=0.25]{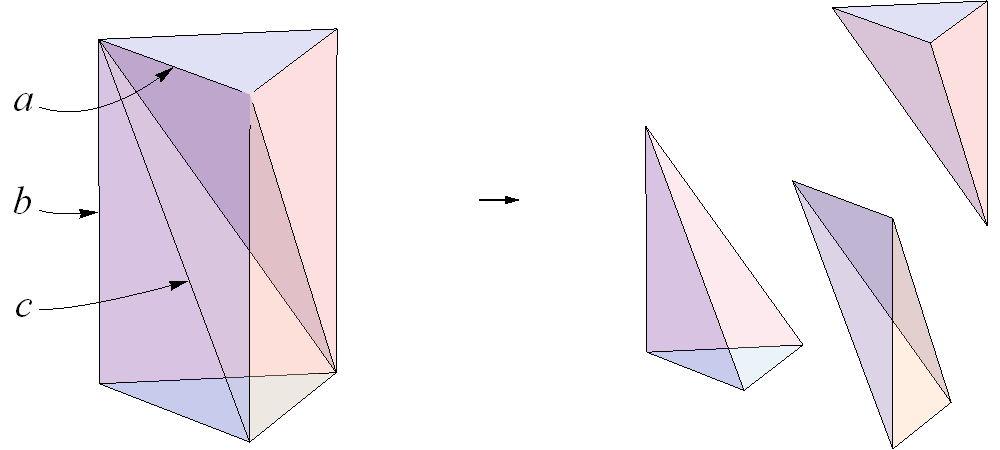}
\end{center}
\vspace{-0.5cm}
\caption{Separation of a prism into three equal volume tetrahedra.}
\label{fig:3CylTet}
\end{figure}

In order to remove the enforcement of the orthogonality condition however, the prisms must be subdivided into tetrahedra. Each prism can easily be divided into three equal volume tetrahedra by simply including diagonal edges, denoted $c$, along the rectangular sides in a consistent manner. These edges have deficit angles of zero, even though the corresponding edges in the smooth manifold have sectional curvatures of $K_{Cyl}^\theta = \frac{1}{r^2} \cos^2 \theta$. Using equation (\ref{CompKl}) however, the sectional curvature orthogonal to each edge $c$ is
\begin{equation}
K_c = \frac{|b| \, \cos^2 \theta \, \epsilon_b}{V_c} \, ,
\end{equation}
with $\theta$ giving the angle between the edges $c$ and $b$.

The diagonal edges $c$ must also be consistent across the boundaries between any pair of neighbouring prisms, though there is no symmetric way of doing this across the entire triangulation. As a result, the number of tetrahedra in the star of each type of edge cannot be constant throughout the piecewise flat manifold. Any definition of volume $V_\ell$ that depends on the number of tetrahedra in the star of $\ell$ will therefore give different results across the triangulation. The Voronoi volume associated with each vertex is unaffected by this ambiguity however, giving
\begin{equation}
V_v
 = |b| \, \frac{5}{3} \, \mathrm{Area}(a,a,a)
 = \frac{1}{3} \, |b| \, \pi \, r^2 \,  .
\end{equation}
The scalar curvature at each vertex is then
\begin{equation}
R_v
 = \frac{2 \, |b| \, \epsilon_b}{V_v}
 = \frac{2}{r^2} \, ,
\end{equation}
which reduces to the two dimensional computation for the scalar curvature at a vertex of an icosahedron, and matches the smooth manifold value exactly. The volumes $V_{v|b}$ and $V_{v|c}$ are conveniently given by half of the vertex volume due to the orthogonality of $b$ and each icosahedron. The rest of the curvatures are shown in table \ref{tab:Cyl}, and match perfectly with the smooth manifold.


\begin{table}[h]
\centering
\begin{tabular}{|c||c|c||c|c|c||c|}
\hline
Edge & $|\ell|$ & $\epsilon$ & $K_\ell$ & $\tens{Rc}_\ell$  \\
\hline
$a$ & $2 \sqrt{\frac{\pi}{5 \sqrt{3}}} \, r$ & $0$ & $0$ & $\frac{1}{r^2}$ \\
$b$ & $|b|$ & $\pi/3$ & $ \frac{1}{r^2}$ & $0$ \\
$c$ & $\sqrt{|a|^2 + |b|^2}$ & 0 &
 $\frac{1}{r^2} \cos^2 \theta$ & $\frac{1}{r^2} \sin^2 \theta$ \\
\hline
\end{tabular}
\caption{Edge-lengths and deficit angles for the $3$-cylinder triangulation, with the sectional and Ricci curvature analogues for each of the three edge types.}
\label{tab:Cyl}
\end{table}

From the Ricci curvatures, the evolution under Ricci flow reduces the radius of the spherical part in the same way as Ricci flow for a $2$-sphere. The edges $b$ are left unchanged, and the component of $c$ tangent to the spherical part changes in line with $a$. This ensures that $a$ and $b$ remain orthogonal throughout the evolution, and the characteristics of the triangulation remain unchanged.

\subsection{Gowdy}
\label{sec:Gowdy}

The Gowdy manifold originates from a cosmological model due to Gowdy \cite{Gowdy}, with a propagating gravitational wave. A three-dimensional space-like slice of this model was used in \cite{CarfIsenGowdy} to show convergence to flat space of the Ricci flow of metrics with indefinite Ricci curvature. The line element in coordinates $(x, y, \theta)$ is given as
\begin{equation}
\mathrm{d} s^2
 = e^{f \, W} \mathrm{d} x^2
 + e^{-f \, W} \mathrm{d} y^2
 + e^{2 a} \mathrm{d} \theta^2 .
\label{Gowdy}
\end{equation}
This manifold is not isotropic, but is homogeneous in each $xy$-plane. As with \cite{CarfIsenGowdy}, a compact without boundary submanifold (with a $T^3$ topology and $\theta$ ranging from $0$ to $2 \pi$) will be considered here. The variables in (\ref{Gowdy}) are also chosen as
\begin{equation}
f = 1, \quad W = 0.1 \sin \theta, \quad a = 0,
\end{equation}
representing a simple sinusoidal plane wave.

Two types of triangulations are used. For the first, the $x$ and $y$ coordinates range over the same values throughout $\theta$, leading to a \emph{cubic} triangulation. The second is `skewed' with respect to the first, with the boundary coordinates of each $x y$-plane changing linearly with $\theta$. This removes any right angles, giving \emph{isosceles} tetrahedra and a strongly Delaunay triangulation. In each case the submanifold is divided into blocks along the $\theta$ direction, with each block then subdivided into six tetrahedra, see figure \ref{fig:GowdyTet}. In order to test for convergence, divisions into 6, 12 and 24 blocks are used, with table \ref{tab:GDefAng} showing a decrease in the size of the deficit angles as the number of blocks increases.

\begin{figure}[h!]
\begin{center}
\includegraphics[scale=0.25]{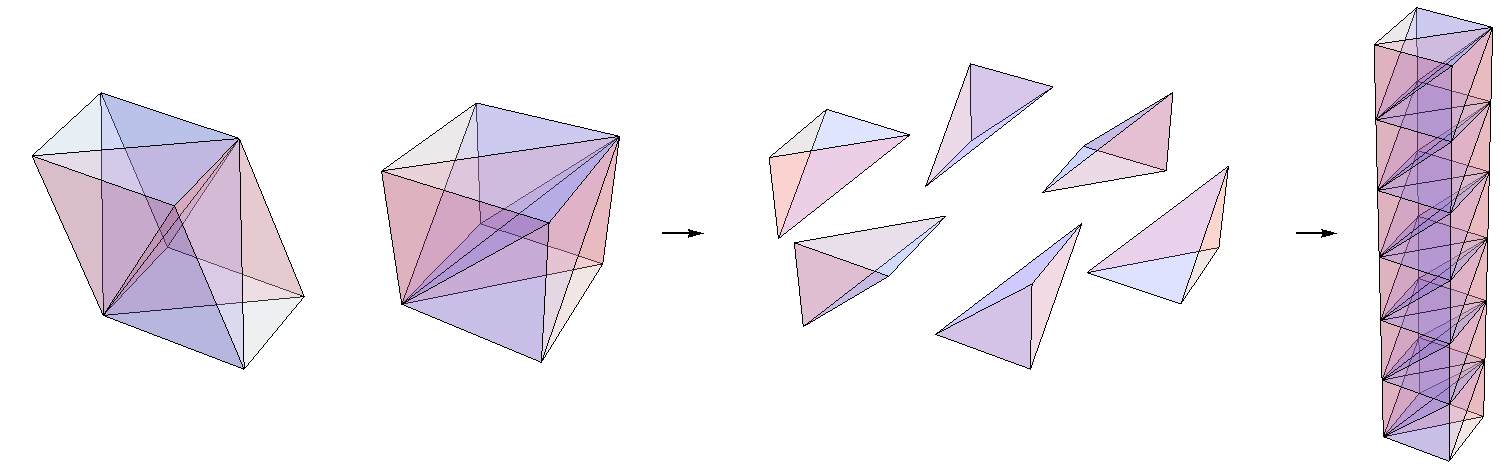}
\end{center}
\vspace{-0.5cm}
\caption{An isosceles and cubic block, with the separation of the cubic block into tetrahedra, and combination of 6 such blocks.}
\label{fig:GowdyTet}
\end{figure}

\begin{table}[h!]
\centering
\begin{tabular}{|c||c|c|c||c|c|c|}
\hline
Blocks &
 Cubic: Max $\epsilon$ & Mean $\epsilon$ & Std. Dev. $\epsilon$ &
  Isos: Max $\epsilon$ & Mean $\epsilon$ & Std. Dev. $\epsilon$ \\
\hline
$6$ &
 2.42 & 0.474 & 0.932 &
 2.67 & 0.669 & 0.828 \\
$12$ &
 0.768 & 0.135 & 0.244 &
 0.839 & 0.185 & 0.215 \\
$24$ &
 0.196 & 0.0346 & 0.0617 &
 0.214 & 0.0472 & 0.0542 \\
\hline
\end{tabular}
\caption{The maximum, mean and standard deviation of the deficit angles in degrees, over all of the edges in each of the three resolutions of each type of Gowdy triangulation.}
\label{tab:GDefAng}
\end{table}

\begin{note}
For computations each $k$-simplex is defined by the set of vertices in its closure. However, since opposite sides of the manifolds are identified to form a $T^3$ topology, ambiguities in these definitions can arise. To avoid this, a covering space with three duplications in each of the $x$ and $y$ directions is used to define the simplices, though only the properties of a single set of vertices or edges require computing.
\end{note}

For the triangulations used here, there is little difference between the Voronoi and barycentric tessellations. In order to show both however, the results for the barycentric are given for the cubic and the Voronoi for the isosceles triangulations. The latter is particularly suited to the Voronoi decomposition since it gives a strongly Delaunay triangulation. Graphs for the scalar curvatures with respect to the $\theta$ coordinate are given in figure \ref{fig:GSca}, showing a clear convergence to the smooth manifold scalar curvature.

\begin{figure}[h!]
\begin{center}
\includegraphics[scale=0.5]{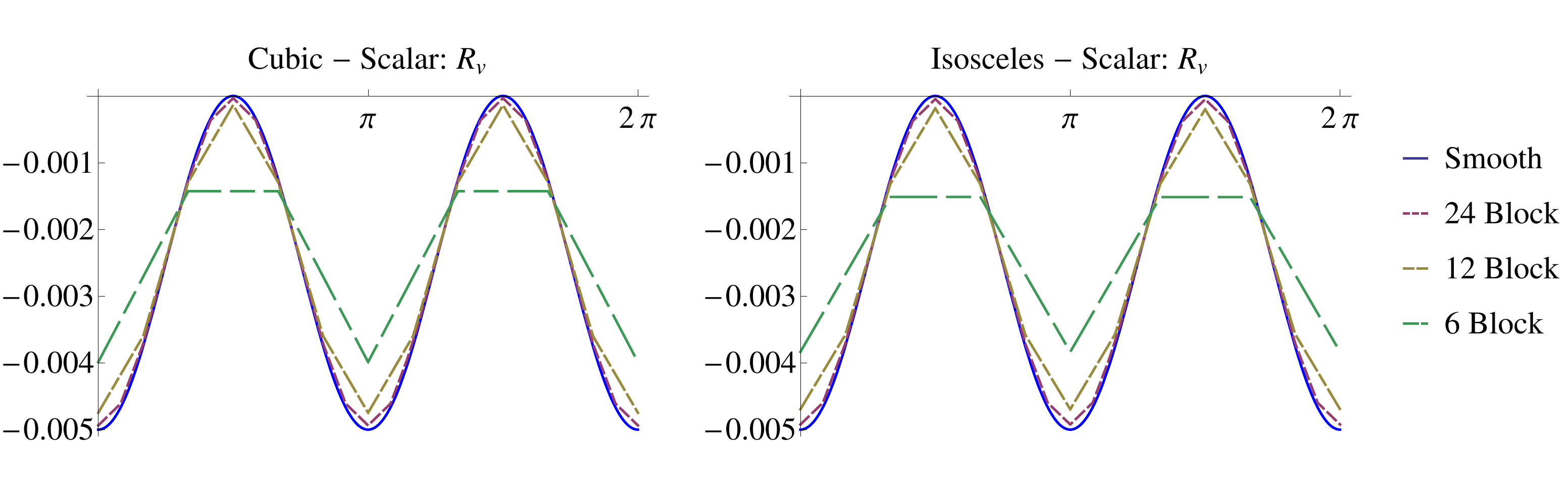}
\end{center}
\vspace{-1cm}
\caption{Graphs of the scalar curvature as a function of $\theta$ for the cubic triangulation on the left and isosceles on the right.}
\label{fig:GSca}
\end{figure}

Due to the numbers of edges, the individual values for the scalar and Ricci curvature computations for each edge are not shown directly. Instead, table \ref{tab:GErr} gives the percentage errors and their standard deviation for each of type of curvature and triangulation. The definition for the percentage error of the sectional curvature is shown below
\begin{equation}
\% \ \textrm{Error}(K_\ell)
 := 100 \, \sum_\ell \left|1 - \frac{K_\ell}{\widetilde K_{\ell_M}}\right| .
\end{equation}
These errors decrease with the increased number of blocks, along with their standard deviations. For the $24$-block triangulations in particular, the errors for the scalar, sectional and Ricci curvatures are all on the order of $1 \%$ for both types of triangulations.

\begin{table}[h!]
\centering
\begin{tabular}{|c||c|c||c|c||c|c|}
\hline
Cubic &
 $R$: \ Error & Std. Dev. &
 $K_\ell$: \ Error & Std. Dev. &
 $\tens{Rc}_\ell$: \ Error & Std. Dev. \\
\hline
$6$  &
 18.2 $\%$ & 17.3 $\%$ &
 12.9 $\%$ & 12.0 $\%$ &
 14.2 $\%$ & 12.0 $\%$ \\
$12$ &
 5.18 $\%$ & 3.03 $\%$ &
 3.36 $\%$ & 3.15 $\%$ &
 3.89 $\%$ & 3.14 $\%$ \\
$24$ &
 1.28 $\%$ & 0.730 $\%$ &
 0.846 $\%$ & 0.800 $\%$ &
 1.01 $\%$ & 0.777 $\%$ \\
\hline
\hline
Isos. &
 $R$: \ Error & Std. Dev. &
 $K_\ell$: \ Error & Std. Dev. &
 $\tens{Rc}_\ell$: \ Error & Std. Dev. \\
\hline
$6$  &
 22.6 $\%$ & 18.6 $\%$ &
 11.0 $\%$ & 9.15 $\%$ &
 10.7 $\%$ & 9.71 $\%$ \\
$12$ &
 6.62 $\%$ & 3.34 $\%$ &
 2.88 $\%$ & 2.68 $\%$ &
 3.01 $\%$ & 2.64 $\%$ \\
$24$ &
 1.64 $\%$ & 0.870 $\%$ &
 1.30 $\%$ & 1.18 $\%$ &
 1.35 $\%$ & 1.12 $\%$ \\
\hline
\end{tabular}
\caption{The percentage error and its standard deviation for the scalar, sectional and Ricci curvatures, with respect to the smooth manifold values.}
\label{tab:GErr}
\end{table}

In order to give a visual representation of the results, graphs are shown in figure \ref{fig:G} for a representative edge from each block, the diagonal in the $x \theta$-face, as a function of $\theta$. The top set of graphs show the lack of correlation between the single-edge sectional curvature (\ref{singlehK}) and the smooth curves, while the middle set in contrast shows the convergence of $K_\ell$ to the same curves, with very close approximations for the higher resolution triangulations. The lower set of graphs also show the convergence of $\tens{Rc}_\ell$ to the smooth curves, and the close approximation of the $24$-block triangulations in particular.

\begin{figure}[h!]
\begin{center}
\includegraphics[scale=0.5]{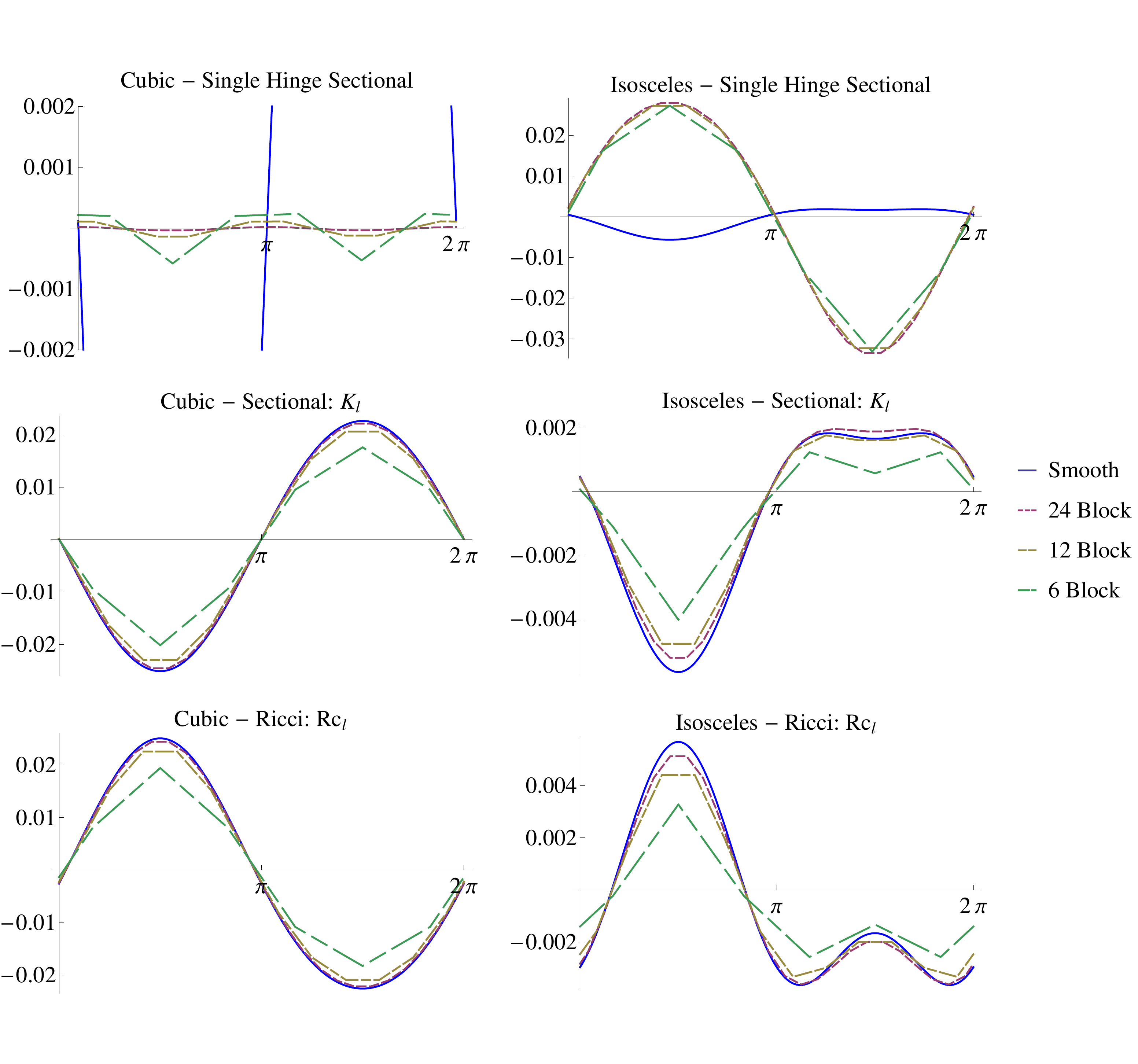}
\end{center}
\vspace{-1.5cm}
\caption{Graphs for the diagonal edge in the $x \theta$-face for the cubic triangulation on the left and isosceles on the right. The top set of graphs show the lack of correlation of the single-hinge sectional curvature with the smooth curvature, with the middle and lower sets showing the convergence of $K_\ell$ and $\tens{Rc}_\ell$}
\label{fig:G}
\end{figure}

The normalized Ricci flow was computed for each triangulation, and gives equivalent short-time behaviour to the smooth Ricci flow, with the length of time increasing for higher resolution triangulations. More work is required to give convergence to a flat metric however.

\begin{remark}
The errors for the single-edge sectional curvatures are on the order of $100 \%$, with no decrease for increased resolutions of the triangulations. The more general Ricci curvature suggested in proposition \ref{prop:RcSn}, using the volumes $V_\ell$, gives similar error values to $\tens{Rc}_\ell$ for the cubic triangulations, but errors on the order of $20 \%$ for the isosceles triangulations. Such a triangulation dependence implies that at the very least, the choice of the volume $V_\ell$ is not an appropriate choice for computing the Ricci curvature.
\end{remark}

\subsection{Nil-3}
\label{sec:Nil3}

The Nil-$3$ manifold is one of the eight Thurston model geometries, with a nilpotent group of diffeomorphisms given by the Heisenberg group \cite{Thurston}. Ricci flow of Nil-$3$ manifolds has been studied in \cite{GIKlinear} and \cite{KnModel}, among others. In Cartesian-type coordinates $(x, y, z)$, the metric can be given as
\begin{equation}
\mathrm{d} s^2
 = \mathrm{d} x^2
 + \mathrm{d} y^2
 + (\mathrm{d} z - x \, \mathrm{d} y)^2 .
\label{Nil3}
\end{equation}
The manifold can be decomposed into compact without boundary submanifolds by quotienting the Heisenberg group by the discrete Heisenberg group. These submanifolds have an almost $T^3$ topology, with a periodic range for both the $y$ and $z$ coordinates, and a `twisted' correspondence between the bounding $y z$-faces, as shown in figure \ref{fig:N3T3}. Here, the $x$-coordinate is chosen to range between $0$ and $1$, with the $y$ and $z$-coordinates having the same range as each other, so that the $y z$-faces are square.

\begin{figure}[h!]
\begin{center}
\includegraphics[scale=0.3]{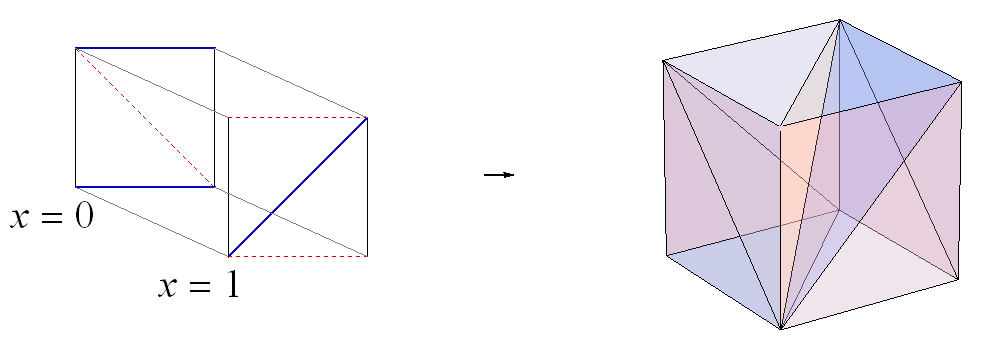}
\end{center}
\vspace{-0.5cm}
\caption{The correspondence of the $y z$ faces at $x = 0$ and $x = 1$, and the first block in each triangulation with the same correspondence between the front and back faces.}
\label{fig:N3T3}
\end{figure}

Triangulations are formed in a similar way to the cubic triangulations of the Gowdy model, divided into $6$, $12$ and $24$ blocks along the $x$-direction, with each block subdivided into $6$ tetrahedra. The first block has a slightly different structure to the rest, in order to account for the twisting correspondence of the $y z$ bounding faces, and is shown in figure \ref{fig:N3T3}. As with the Gowdy computations, a covering space of nine such submanifolds was used to define the simplicial graph, with the equations for only a single set of vertices and edges requiring solutions. The decrease in the deficit angles for the increased number of blocks can be seen in table \ref{tab:N3DefAng}.

\begin{table}[h!]
\centering
\begin{tabular}{|c||c|c|c||c|c|c|}
\hline
Blocks &
 Max $\epsilon$ & Mean $\epsilon$ & Std. Dev. $\epsilon$ \\
\hline
$6$ &
 1.23 & 0.363 & 0.383 \\
$12$ &
 0.309 & 0.0882 & 0.0956 \\
$24$ &
 0.0773 & 0.0217 & 0.0235 \\
\hline
\end{tabular}
\caption{The maximum, mean and standard deviation of the deficit angles in degrees, over all of the edges in each of the three resolutions the Nil-$3$ triangulation.}
\label{tab:N3DefAng}
\end{table}

Due to the nature of the topology, finding Delaunay triangulations is not straight forward. Those that have been found are also unstable, with evolution under Ricci flow leading to non-Delaunay triangulations in very short time scales. A barycentric tessellation was therefore chosen to give the dual volumes. The percentage errors for each of the three types of curvatures are shown in table \ref{tab:N3Err}, with a clear decrease in the errors for higher resolutions of the curvature.

\begin{table}[h!]
\centering
\begin{tabular}{|c||c|c||c|c||c|c|}
\hline
Blocks &
 $R$: \ Error & Std. Dev. &
 $K_\ell$: \ Error & Std. Dev. &
 $\tens{Rc}_\ell$: \ Error & Std. Dev. \\
\hline
$6$  &
 0.544 $\%$ & 0.425 $\%$ &
 6.20 $\%$ & 7.19 $\%$ &
 4.82 $\%$ & 5.70 $\%$ \\
$12$ &
 0.162 $\%$ & 0.166 $\%$ &
 4.19 $\%$ & 6.39 $\%$ &
 3.27 $\%$ & 5.00 $\%$ \\
$24$ &
 0.0432 $\%$ & 0.0642 $\%$ &
 3.57 $\%$ & 6.02 $\%$ &
 2.79 $\%$ & 4.70 $\%$ \\
\hline
\end{tabular}
\caption{The percentage error and its standard deviation for the scalar, sectional and Ricci curvatures, with respect to the smooth manifold values.}
\label{tab:N3Err}
\end{table}

A graph of the scalar curvature for each triangulation as a function of $x$ is shown in figure \ref{fig:N3}, and can be seen to converge to the smooth value. The peaks in the early part of the graph correspond to the differently triangulated block. Graphs of the sectional and Ricci curvatures for the $x$-coordinate edge in each block are also shown in figure \ref{fig:N3} as a functions of $x$. The single-edge sectional curvature (\ref{singlehK}) can again be seen to converge to a different curve from the smooth sectional curvature, while the multi-hinge expression $K_\ell$ and the Ricci curvature $\tens{Rc}_\ell$ give very close approximations to their respective smooth curves.

\begin{figure}[h!]
\begin{center}
\includegraphics[scale=0.5]{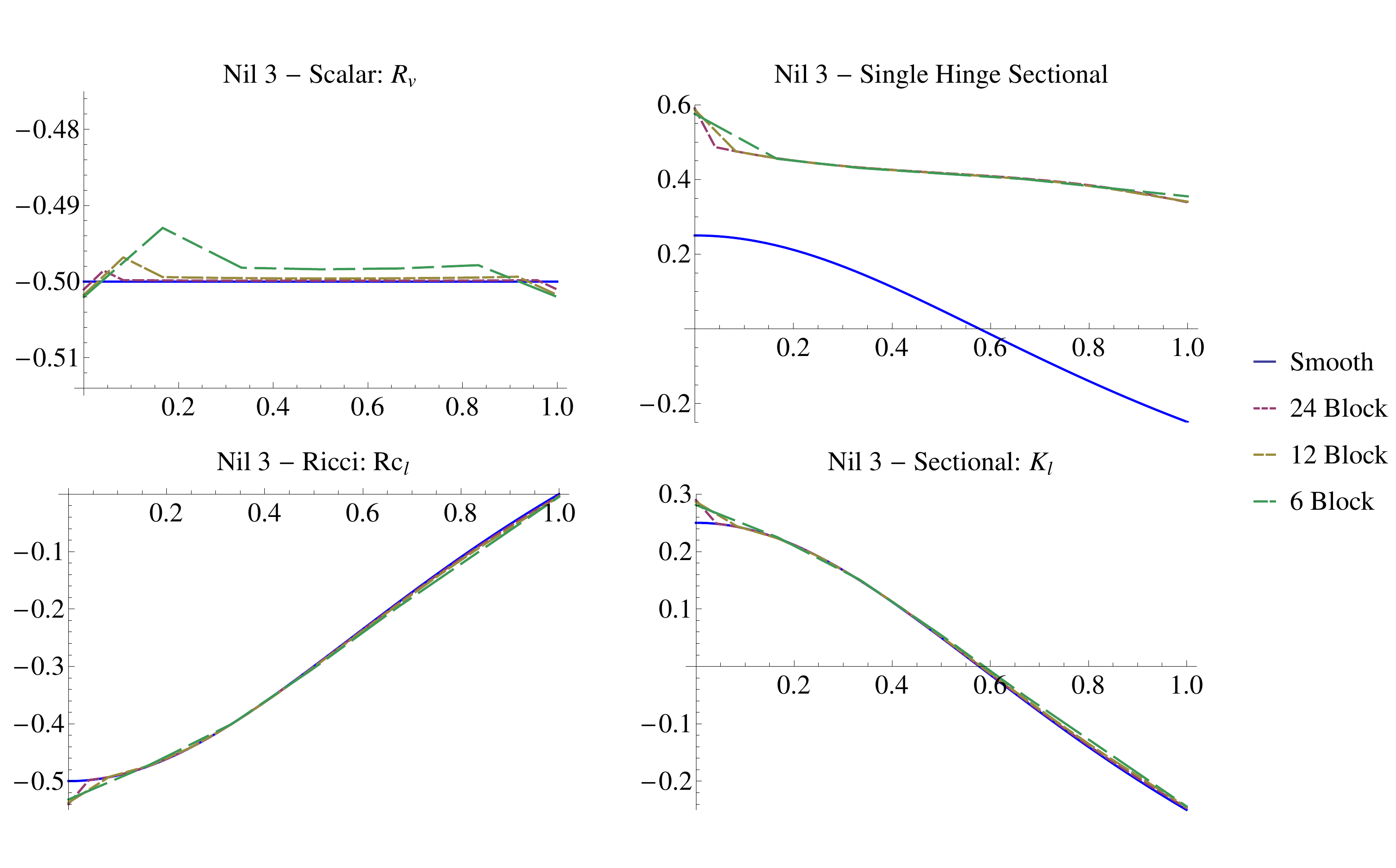}
\end{center}
\vspace{-1cm}
\caption{A graph of the scalar curvature $R_v$ on the top left, and graphs of the single-hinge sectional curvature, $K_\ell$ and $\tens{Rc}_\ell$ for the $x$-coordinate edge in each block, all as functions of $x$.}
\label{fig:N3}
\end{figure}

The results for the piecewise flat simplicial Ricci flow of a single block triangulation are shown in figure \ref{fig:N3RF}, with only six tetrahedra and seven edges. For such a small triangulation, the flow of the edge-lengths is remarkably close to that of their corresponding geodesic segments under smooth Ricci flow. This correspondence also improves for higher resolution triangulations.

\begin{figure}[h!]
\begin{center}
\includegraphics[scale=0.5]{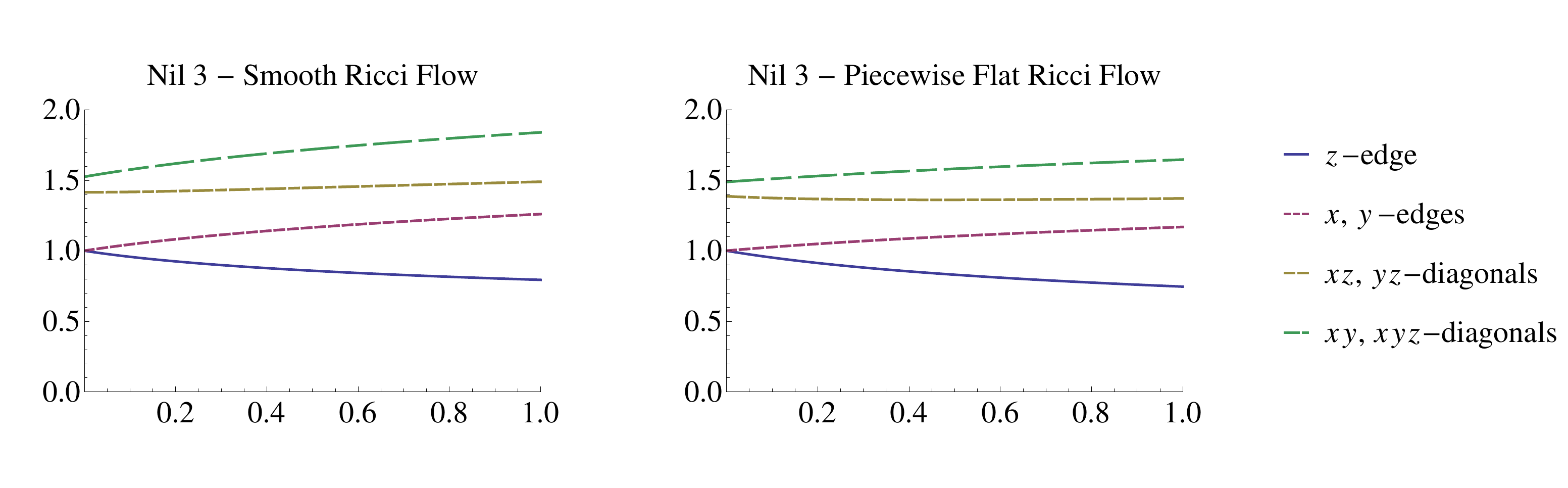}
\end{center}
\vspace{-1cm}
\caption{The lengths of the smooth geodesic segments and the corresponding piecewise flat edges with respect to time for smooth and piecewise flat Ricci flow respectively.}
\label{fig:N3RF}
\end{figure}

\section{Conclusion}

Piecewise flat scalar, sectional and Ricci curvatures and Ricci flow have been constructed in three-dimensions, depending combinatorially on the simplicial structure and also on a choice of dual tessellation. The results of computations have also shown these to converge to their smooth values for triangulations of the $3$-sphere, $3$-cylinder, Gowdy and Nil-$3$ manifolds.

While the computations in section \ref{sec:Comp} give strong support for the constructions, work is ongoing to compute these curvatures on manifolds with less symmetry, such as a perturbed $3$-sphere and neck-pinched dumbbell model. More detailed investigations are also being performed on the piecewise flat Ricci flow, and its dependence on triangulation properties in particular.



\appendix

\section{Dual Tessellations}
\label{sec:Duals}

Definitions are given below for both the Voronoi and barycentric tessellations. Though no specific choice of dual volumes has been specified in the development of the theory, these are the tessellations that have been used for the computations in section \ref{sec:Comp}.

\begin{definition}[Circumcentric dual and Voronoi tessellation]

\

\begin{enumerate}
\item A Voronoi tessellation of a piecewise flat manifold $S^n$ associates to each vertex $v$ those points which are closer to $v$ than to any other vertex.

\item For a Delaunay triangulated piecewise flat manifold, the Voronoi $n$-volume $V_v^{(c)}$ associated with a vertex $v$ is the $n$-polytope formed by the circumcenters of the $n$-simplices in the star of $v$.

\item For a Delaunay triangulation $S^n$, for each co-dimension-$1$ simplex $\sigma^{n-1}$ there exists a dual edge $\lambda_s$ formed by joining the circumcenters of the two $n$-simplices in the star of $\sigma^{n-1}$. The extension of $\lambda_s$ is orthogonal to $s$ and passes through its circumcenter.

\item Circumcentric hybrid cells can also be defined for each $k$-simplex $\sigma^k$ as the $n$-polytopes formed by the vertices in the closure of $\sigma^k$ and the circumcenters of the $n$-simplices in its star.
\end{enumerate}
\end{definition}

The circumcentric dual is particularly useful due to its orthogonality properties, giving straight-forward computations for the volumes dual to each vertex. Difficulties with Voronoi tessellations can arise however since the circumcenters of simplices do not always lie in their interior. This can lead to inverted dual edges $\lambda_s$ where the triangulation of $S^n$ will no longer be Delaunay, see for example \cite{HambWill} or \cite{DyerPhd}. When a triangulation is no longer Delaunay, the third point in definition \ref{def:V_v} for the volume $V_v$ will not be satisfied everywhere, and in extreme situations dual and hybrid volumes may vanish.

\begin{definition}[Barycentric tessellation]

\

\begin{enumerate}
\item The barycentric tessellation gives a subdivision of each $k$-simplex $\sigma^k$ into $k + 1$ parts of equal $k$-volumes, each associated with a vertex in the closure of $\sigma^k$.

\item The barycenter of an edge is defined as the midpoint of the edge. The barycenter of a $k$-simplex $\sigma^k$ is then given by the intersection point of the median lines joining each vertex in the closure of $\sigma^k$ to the barycenter of the opposite $(k - 1)$-simplex.

\item The barycentric $n$-volume $V_v^{(b)}$ associated with a vertex $v$ is the $n$-polytope formed by the barycenters of all of the simplices in the star of $v$.

\item Barycentric hybrid cells are defined for each $k$-simplex $\sigma^k$ as the $n$-polytopes formed by the vertices in the closure of $\sigma^k$ and the barycenters of the simplices its star.
\end{enumerate}
\end{definition}

Barycentric tessellations do not have the same issues as the Voronoi, since the barycenter of a $k$-simplex will always lie in its interior. This gives a broader range of applicability, since the Delaunay condition is not required, a condition that becomes more and more restrictive as the dimension is increased. The barycentric dual volumes are also easy to compute once the volumes of the $n$-simplices in the star of a vertex are known.

However, while the Voronoi dual volumes depend mostly on the distribution of vertices, the barycentric dual volumes depend specifically on the volumes of the $n$-simplices. This makes the barycentric tessellation less suitable for situations like the regular triangulation of the $3$-cylinder in section \ref{sec:3Cyl}, where vertices that should all be symmetric can have different numbers of equal volume tetrahedra in their stars.

\section{Possible extension of Ricci curvature to higher dimensions}
\label{sec:RcSn}

The Ricci curvature expression in section \ref{sec:Ric} does not follow the same approach as both the scalar and sectional curvatures. This approach can be used however, if some appopriate $n$-volume $H_\ell$ associated with each edge $\ell$ can be defined. Using the relation for the smooth Ricci curvature along a vector in terms of sectional curvatures (\ref{RcK}), a similar procedure to the scalar curvature construction for theorem \ref{thm:R} can be followed, using $2$-surfaces parallel with $\ell$ for each hinge $h$ instead of those orthogonal to $h$.


\begin{proposition}[Possible piecewise flat Ricci curvature for $S^n$]
\label{prop:RcSn}
The average Ricci curvature in the direction of an edge $\ell \subset S^n$, over a region $H_\ell$, is given by the expression
\begin{equation}
\widetilde{\tens{Rc}}_{H_\ell}
 = \frac{\sum_h |h_{|H_\ell}| \, \sin^2 \theta_h \, \epsilon_h}{H_\ell} ,
\label{RcH}
\end{equation}
for each hinge $h$ intersecting $H_\ell$, with $h_{|H_\ell} = h \cap H_\ell$, $\theta_h$ representing the angle between $\ell$ and $h$, and $\epsilon_h$ the deficit angle at $h$.
\end{proposition}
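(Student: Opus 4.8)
The plan is to follow the scalar-curvature argument of theorem~\ref{thm:R} almost verbatim, replacing the decomposition of the scalar curvature into sectional curvatures of a complete orthogonal set of $2$-planes by the decomposition~(\ref{RcK}) of the Ricci curvature along $\ell$ into sectional curvatures of $2$-planes \emph{containing} $\hat\ell$. I would first decompose $H_\ell$ into subregions $D_h$, each enclosing a single restricted hinge $h_{|H_\ell}$ and no other, so that $\int_{H_\ell}\tens{Rc}(\hat\ell,\hat\ell)\,\mathrm{d} V = \sum_h \int_{D_h}\tens{Rc}(\hat\ell,\hat\ell)\,\mathrm{d} V$, and then evaluate each hinge term using $2$-surfaces parallel to $\ell$, exactly as the surfaces $P^\perp_h$ orthogonal to $h$ were used in lemma~\ref{lem:IntRh}.

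For a hinge $h$ meeting $D_h$ at angle $\theta_h$ to $\ell$, I would choose an orthonormal frame at each point of $h_{|H_\ell}$ so that one plane containing $\hat\ell$ also contains the hinge direction $\hat h$, while the orthogonal one is spanned by $\hat\ell$ and the direction of $P^\perp_h$ transverse to the $\ell$--$h$ plane. The first of these contains $\hat h$, cannot enclose $h$, and so contributes nothing, just as the complementary span did in lemma~\ref{lem:IntRh}. A direct check in the cylindrical coordinates of theorem~\ref{thm:IKtheta} shows the second plane to be a surface $P^\theta_h$ of lemma~\ref{lem:Pitheta} with $\theta = \tfrac{\pi}{2} - \theta_h$, so by theorem~\ref{thm:IKtheta} its integrated sectional curvature is $\cos\!\left(\tfrac{\pi}{2} - \theta_h\right)\epsilon_h = \sin\theta_h\,\epsilon_h$. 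These surfaces foliate $D_h$, and the remaining step is to find the extent of the foliation that actually meets $h$; projecting $h_{|H_\ell}$ onto the direction transverse to the surfaces gives a range of length $|h_{|H_\ell}|\sin\theta_h$. Multiplying the per-surface curvature by this range,
\begin{equation}
\int_{D_h} \tens{Rc}(\hat\ell,\hat\ell)\,\mathrm{d} V
 = \sin\theta_h\,\epsilon_h \cdot |h_{|H_\ell}|\,\sin\theta_h
 = |h_{|H_\ell}|\,\sin^2\theta_h\,\epsilon_h ,
\end{equation}
and summing over the hinges before dividing by $|H_\ell|$ gives the stated formula~(\ref{RcH}). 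The three-dimensional computation extends to arbitrary $n$ in the same manner as theorem~\ref{thm:IKtheta}, by factoring out the span of $h$ orthogonal to the plane of $\hat\ell$ and $P^\theta_h$.

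I expect the main obstacle to be disentangling the two factors of $\sin\theta_h$ and justifying the foliation: one factor comes from the tilt $\tfrac{\pi}{2} - \theta_h$ of a plane containing $\hat\ell$ away from $P^\perp_h$, the other from the fact that the foliation samples $h$ only along its transverse projection, and it must be checked that these surfaces close up consistently across the star of $h$ as in lemma~\ref{lem:Pitheta}. As an independent verification I would compare with the smooth identity of lemma~\ref{lem:RcM3}: a single hinge contributes $\epsilon_h$ to $\tfrac{1}{2}R$ and, by theorem~\ref{thm:Kl}, $\cos^2\theta_h\,\epsilon_h$ to $K^\perp(\hat\ell)$, so that $\tfrac{1}{2}R - K^\perp$ returns $(1-\cos^2\theta_h)\,\epsilon_h = \sin^2\theta_h\,\epsilon_h$, matching the direct calculation. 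The genuine gap, noted in the statement itself, is that no region $H_\ell$ satisfying all the properties used above has yet been constructed, so this argument establishes~(\ref{RcH}) only conditionally on the existence of a suitable $H_\ell$.
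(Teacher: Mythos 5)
Your proposal follows the paper's own proof essentially step for step: the same decomposition of $H_\ell$ into single-hinge regions $D_h$, the same use of (\ref{RcK}) with an orthogonal set of $2$-planes containing $\hat\ell$ of which only the surface $P_h^{\pi/2-\theta_h}$ contributes $\sin\theta_h\,\epsilon_h$, and the same second factor of $\sin\theta_h$ from the transverse extent of the foliation meeting $h$, mirroring the right-triangle argument of theorem~\ref{thm:Kl}. The consistency check against lemma~\ref{lem:RcM3} and the caveat about the undetermined region $H_\ell$ likewise match the paper's surrounding discussion, so the argument is correct and not a genuinely different route.
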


\begin{proof}
The region $H_\ell$ can be decomposed into regions $D_h$, each enclosing the restriction of a hinge $h$ to $H_\ell$,
\begin{equation}
h_{|H_\ell} \subset D_h, \qquad
 h_i \cap D_{h_j} = \emptyset \quad
 \forall \ i \neq j, \qquad
 |H_\ell| = \sum_{h | h \cap H_\ell} |D_h|.
\end{equation}
Over each region $D_h$, the integral of the Ricci curvature parallel to $\ell$ can be given in terms of the integrals of sectional curvatures for a complete orthogonal set of almost-flat $2$-surfaces $P_i$ parallel to $\ell$. The choice of surfaces $P_i$ at each point $p \in h$ can be made so that one is orthogonal to the line joining $p$ orthogonally to $\ell$. It can easily be seen that this surface will be $P_h^{\pi - \theta_h}$, with the remaining span of surfaces parallel to $h$. The only sectional curvature contribution then comes from ${\cal K}_h^{\pi - \theta_h}$,
\begin{equation}
\int_{D_h} \mathrm{Rc}(\ell) \ \mathrm{d} V^n
 = \int_{D_h} \left[
   \sum_{i} K(P_i)
   \right] \mathrm{d} V^n
 = \int_{h} {\cal K}_h^{\pi - \theta_h} \ \mathrm{d} V^{(n-2)}
 \simeq \int_{h} \ \mathrm{d} V^{(n-2)} \sin \theta_h \epsilon_h .
\end{equation}
The integral in the final term above evaluates to $|h_{|H_\ell}| \sin \theta$, using similar arguments as the proof of theorem \ref{thm:Kl}. The integral of the sectional curvature over all of $H_\ell$ is the sum of the integrals over each of $D_h$,
\begin{equation}
\int_{V_v} R \ \mathrm{d} V^n
 = \sum_{h | h \cap H_\ell}
   \int_{D_h} \mathrm{Rc}(\ell) \ \mathrm{d} V^n
 = \sum_{h | h \cap H_\ell} |h_{|H_\ell}| \sin^2 \theta_h \epsilon_h ,
\end{equation}
assuming small deficit angles. The average curvature is then found by dividing the integral by the volume of $H_\ell$.

\end{proof}

In three dimensions, if the volume $H_\ell := V_\ell$, then (\ref{RcH}) above is equivalent to (\ref{RcM3}) if and only if the average scalar curvature can be constructed directly over $V_\ell$ as
\begin{equation}
\widetilde{R}_{V_\ell}
 = \frac{2}{|V_\ell|}\sum_{h | h \cap H_\ell} |h_{|H_\ell}|\, \epsilon_h .
\end{equation}
Unfortunately, while $V_\ell$ gives an appropriate volume for constructing the sectional curvature, the scalar and Ricci curvatures are quite sensitive to the boundaries of $V_\ell$ within each vertex volume $V_v$. Hinges which are close to this boundary make an angle close to $\frac{\pi}{2}$ with the edge $\ell$ by construction. While these hinges are given a weighting of $\cos^2 \theta$ for the sectional curvature, which goes to zero as the boundary is approached, the scalar and Ricci curvatures are weighted by $1$ and $\sin^2 \theta$ respectively. A slight variation of a hinge to the other side of the boundary will therefore have little effect on the sectional curvature, but can have a large effect on the scalar or Ricci curvatures evaluated over $V_\ell$.

Searches for a more appropriate definition of the volume $H_\ell$ have not yet been successful, but work continues in this direction.


\bibliography{SimpGeom}
\bibliographystyle{unsrt}

\end{document}